\documentclass[reqno, 12pt]{amsart}%
\usepackage{amsaddr}
\usepackage{amssymb}
\usepackage[nesting]{hyperref}
\usepackage[pdftex]{graphicx}
\usepackage{listings}
\usepackage{multirow}
\usepackage{placeins}
\usepackage{color}
\usepackage{subfigure}
\usepackage{lscape}
\usepackage{dsfont}
\usepackage{amsmath}
\usepackage{amsfonts}
\usepackage{tikz}
\usepackage{verbatim}
\usepackage{accents} 
\usepackage{todonotes}
\usepackage{mathtools}
\usepackage{bbm}
\usepackage{algorithm}
\usepackage{algpseudocode}
\usepackage{subcaption}
\usepackage{todonotes}

\usepackage[style=authoryear, natbib=true, backend=biber]{biblatex}
\usepackage{csquotes}
\usepackage{mathrsfs}
\usepackage{cleveref}
\let\cref\Cref 
\usepackage{physics}
\usepackage{enumerate}
\usepackage{bbm}

\newcommand{\BlackBoxes}{\global\overfullrule5pt}

\BlackBoxes

\providecommand{\U}[1]{\protect\rule{.1in}{.1in}}

\allowdisplaybreaks

\newcommand{\R}{\mathbb{R}} 
\newcommand{\N}{\mathbb{N}} 

\newcommand{\PP}{\mathbb{P}}
\newcommand{\EE}{\mathbb{E}}

\newcommand{\E}{\mathbb{E}}

\newcommand{\1}{\mathbf{1}}

\newcommand{\ThetaSet}{\Theta}
\newcommand{\Pmax}{P_{\max}}

\newcommand{\uth}{\underline{\vartheta}}
\newcommand{\oth}{\overline{\vartheta}}

\newtheorem{theorem}{Theorem}
\newtheorem{corollary}[theorem]{Corollary}
\newtheorem{lemma}[theorem]{Lemma}

\theoremstyle{definition}

\newtheorem{remark}[theorem]{Remark}
\newtheorem{definition}[theorem]{Definition}
\newtheorem{assumptions}[theorem]{Assumption}

\crefname{theorem}{Theorem}{Theorems}
\crefname{corollary}{Corollary}{Corollaries}
\crefname{lemma}{Lemma}{Lemmas}
\crefname{proposition}{Proposition}{Propositions}
\crefname{example}{Example}{Examples}
\crefname{remark}{Remark}{Remarks}
\crefname{definition}{Definition}{Definitions}
\crefname{assumptions}{Assumption}{Assumptions}

\numberwithin{equation}{section} \numberwithin{theorem}{section}
\def\0{\kern0pt\-\nobreak\hskip0pt\relax}

\makeatletter
\AtBeginDocument{ \def\@serieslogo{ \vbox to\headheight{ \parindent\z@ \fontsize{6}{7\p@}\selectfont
\vss}}}

\def\makeoverbar#1#2#3#4#5#6#7{ \setbox0=\hbox{$\m@th#2\mkern#5mu{{}#3{}}\mkern#6mu$} 
\setbox1=\null \dimen@=#4\fontdimen8#13 \dimen@=3.5\dimen@
\advance\dimen@ by \ht0 \dimen@=-#7\dimen@ \advance\dimen@ by \wd0
\ht1=\ht0 \dp1=\dp0 \wd1=\dimen@
\dimen@=\fontdimen8#13 \fontdimen8#13=#4\fontdimen8#13
\rlap{\hbox to \wd0{$\m@th\hss#2{\overline{\box1}}\mkern#5mu$}}
\fontdimen8#13=\dimen@}
\def\mylabel#1#2{{\def\@currentlabel{#2}\label{#1}}}
\makeatother
\newcommand{\Exv}{\EE}
\newcommand{\Pmass}{\PP}

\DeclareMathOperator{\proj}{proj}

\newcommand{\statespace}{\mathcal{X}}

\newcommand{\polspace}{\Pi}

\newcommand{\timeindex}{\mathcal{T}}
\newcommand{\TimeIndex}{\mathcal{T}}

\newcommand{\PufferStoremax}{{P_\mathrm{max}}}
\newcommand{\pin}{p_\mathrm{in}}
\newcommand{\pout}{p_\mathrm{out}}
\newcommand{\pem}{p_\mathrm{em}}
\newcommand{\psum}{p_\mathrm{sum}}

\DeclareMathOperator*{\argmax}{arg\,max}

\DeclareMathOperator{\AC}{AC}
\DeclareMathOperator{\ACD}{ACD}
\DeclareMathOperator{\Lip}{Lip}
\DeclareMathOperator{\LC}{L^1}
\DeclareMathOperator{\LCLoc}{L_\mathrm{loc}^1}

\newcommand*\diff{\mathop{}\!\mathrm{d}}

\newcommand{\initdist}{\rho}

\newcommand{\thsspace}{\Theta}
\newcommand{\ths}{\vartheta}

\newcommand{\lt}{\underline{\vphantom{\overline{\ths}}\ths}}
\newcommand{\ut}{\overline{\vphantom{\underline{\ths}}\ths}}
\newcommand{\ltt}{\lt_t}
\newcommand{\utt}{\ut_t}
\newcommand{\thch}[1]{{\lt^*_{#1}}}
\newcommand{\thds}[1]{{\ut^*_{#1}}}
\newcommand{\tthch}[1]{{\lt^\times_{#1}}}
\newcommand{\tthds}[1]{{\ut^\times_{#1}}}

\newcommand{\hch}[1]{\underline{\vphantom{\overline{h}}h}_{#1}}
\newcommand{\hds}[1]{\overline{\vphantom{\underline{h}}h}_{#1}}
\newcommand{\hold}[1]{h_{#1}}

\newcommand{\thcht}{\thch{t}}
\newcommand{\thdst}{\thds{t}}
\newcommand{\tthcht}{\tthch{t}}
\newcommand{\tthdst}{\tthds{t}}

\newcommand{\hcht}{\hch{t}}
\newcommand{\hdst}{\hds{t}}
\newcommand{\holdt}{\hold{t}}

\newcommand{\clip}[3]{\left[#1\right]_{#2}^{#3}}
\newcommand{\clipstorage}[1]{\left[#1\right]^{\PufferStoremax}_{0}}

\newcommand{\pol}{\pi}

\newcommand{\ri}[1]{r^{(1)}_{#1}}
\newcommand{\rii}[1]{r^{(2)}_{#1}}
\newcommand{\lo}{\ell}
\newcommand{\lsample}{\hat{\lo}}
\newcommand{\rsample}{\hat{r}}

\newcommand{\Vsample}{\widehat{V}}
\newcommand{\hsample}{\widehat{h}}
\newcommand{\asample}{\hat{a}}
\newcommand{\xsample}{\hat{x}}

\newcommand{\episode}{e}
\newcommand{\Episode}{E}
\newcommand{\episodespace}{\mathcal{E}}

\newcommand{\Vpara}[1]{V^\ths_{#1}}
\newcommand{\hpara}[1]{h^\ths_{#1}}
\newcommand{\apara}[1]{a^\ths_{#1}}
\newcommand{\hchpara}[1]{\underline h^\ths_{#1}}
\newcommand{\hdspara}[1]{\overline h^\ths_{#1}}
\newcommand{\achpara}[1]{\underline a^\ths_{#1}}
\newcommand{\adspara}[1]{\overline a^\ths_{#1}}

\newcommand{\lsmp}{\lsample^{\episode}}
\newcommand{\Vsmp}[1]{\Vsample^{\ths,\episode}_{#1}}
\newcommand{\hsmp}[1]{\hsample^{\ths,\episode}_{#1}}

\newcommand{\riismp}[1]{\rsample^{(2),\episode}_{#1}}

\newcommand{\gradientsmp}{\widehat{\nabla}}
\newcommand{\asmp}[1]{\asample^{\ths,\episode}_{#1}}
\newcommand{\xsmp}[1]{\xsample^{\ths,\episode}_{#1}}
\newcommand{\lrv}{\lsample^{\Episode}}
\newcommand{\Vrv}[1]{\Vsample^{\Episode,\ths}_{#1}}
\newcommand{\hrv}[1]{\hsample^{\Episode,\ths}_{#1}}
\newcommand{\arv}[1]{\asample^{\Episode,\ths}_{#1}}

\newcommand{\initstate}{x_0}
\newcommand{\dist}[1]{z_{#1}}
\newcommand{\Dist}[1]{Z_{#1}}

\newcommand{\goodset}[1]{B^\ths_{#1}}
\newcommand{\goodsettilde}{\widetilde{B}}
\newcommand{\forbiddenset}[1]{C^\ths_{#1}}
\newcommand{\forbiddensettilde}[1]{\widetilde{C}^\ths_{#1}}

\newcommand{\criticalset}[1]{\{0,\lt_{#1}, \ut_{#1}, \PufferStoremax\}}

\newcommand{\past}[1]{{< #1}}
\newcommand{\future}[1]{{> #1}}

\newcommand{\thspa}[1]{\ths_\past{#1}}
\newcommand{\thsfu}[1]{\ths_\future{#1}}

\usepackage{xspace}
\newcommand{\PLK}{P\L K\xspace}

\newcommand{\defi}[1]{\emph{#1}}

\usepackage{todonotes}

\begin{document}

\title[Heat Storage Optimization]{Optimality of Threshold Policies for Heat Storage Optimization and a Policy Gradient Algorithm}
\author[N. \smash{B\"auerle}]{Nicole B\"auerle$^*$}
\address[N. B\"auerle]{Department of Mathematics,
Karlsruhe Institute of Technology (KIT), D-76128 Karlsruhe, Germany, \href{mailto:nicole.baeuerle@kit.edu}{nicole.baeuerle@kit.edu}, ORCID 0000-0003-0077-3444\footnote{$^*$The authors gratefully acknowledge financial support from the Federal Ministry of Research, Technology and Space (BMFTR), Project-ID 05M22VKB }}


\author[F. \smash{D\"ottling}]{Florian D\"ottling$^*$}
\address[F. D\"ottling]{Department of Mathematics,
Karlsruhe Institute of Technology (KIT), D-76128 Karls\-ruhe, Germany, \href{mailto:florian.doettling@kit.edu} {florian.doettling@kit.edu}}

\makeatletter \providecommand\@dotsep{5} \makeatother

\renewcommand*{\arraystretch}{1.5}

\maketitle


\begin{center}
 \today
\end{center}

  \noindent
{\bf Keywords.} Markov Decision Process, Prosumer Storage System, Threshold Policy, Policy-Gradient Algorithm, Kurdyka-\L ojasiewicz Inequality.                
    \\

\noindent
{\bf Abstract. }
We study the optimal control of a stylized finite-horizon heat-storage system operated by a prosumer who can buy heat from and sell heat to a district heating network. Heat production and consumption are uncertain, and charging and discharging decisions have to be taken before the subsequent heat balance is observed. The resulting problem is formulated as a Markov decision process with continuous state and action spaces. We first establish a structural characterization of the optimal policy as a two-parameter threshold policy. Under additional regularity assumptions, we prove differentiability and Lipschitz properties of the value functions.

Exploiting the threshold structure, we parameterize the policy by the time-dependent lower and upper thresholds and derive a pathwise stochastic-gradient estimator. We then propose a projected stochastic policy-gradient algorithm, establish a constrained Kurdyka-\L ojasiewicz inequality  and derive global finite-sample bounds for both the objective optimality gap and the distance to the optimal threshold vector. Finally, numerical experiments for a non-stationary heat-balance process illustrate the evolution of the threshold parameters. 

\section{Introduction}
\noindent 
The increasing importance of renewable energy sources, particularly photovoltaic systems, makes the optimal management of the storage device under random supply and demand behavior an important task.

In this work, we  consider a single family household with local heat production (e.g.\ solar thermal panels), energy (heat) storage (e.g.\ hot water tank, battery) and a connection to a district heating network (see \cref{fig:prosumer_model}). The aim is to optimize the management of the energy storage device. We view this problem as a prosumer storage problem (\enquote{prosumer} is a blend word of \enquote{producer} and \enquote{consumer}) and model a simplified version. Using techniques from the theory of Markov decision processes (MDPs) we derive the structure of the optimal policy for the management of the energy storage and derive a policy gradient algorithm to find an optimal policy numerically.

More precisely, we consider the problem in discrete time (on an hourly basis) with a continuous state which is the current charge of the storage. The production of energy and its consumption within an hour are random quantities. The balance of heat production minus consumption is modeled by a sequence of independent, time-inhomogeneous random variables. The role of the storage device is to store energy from the solar panel which is not needed for periods where there is no production, but demand for energy. On top of this, energy can be sold to and bought from the district heating network (DHN). Here we assume that selling and buying prices are deterministic, fixed quantities. The crucial feature of the energy storage device that we model is a decision/implementation delay. Typically, charging and discharging the energy storage may come with time delays, since there are technical restrictions on how fast these charging-/discharging processes are. But this implies a non-trivial decision because in times of high energy production the device has to decide how much energy to sell to the DHN before knowing the energy production and the demand within the next hours. This could lead to a low storage when energy is desperately needed and has to be provided by an emergency heat generator at a higher (emergency) price. A somewhat symmetric decision concerns the time point and amount of buying  energy from the DHN which is after some time delay stored in the local storage. Since this is the focus of our study we simplify other technical considerations and consider a stylized model. 

The aim is to maximize the expected reward from storage decisions over a finite planning horizon. Annual operation at an hourly resolution motivates the consideration of long horizons $T$. The theoretical analysis allows for time-inhomogeneous disturbances. We first prove that an optimal policy is characterized by two time-dependent thresholds: heat energy is bought from the DHN when the storage level is below the lower threshold and sold to the DHN when it exceeds the upper threshold. Both the lower and the upper threshold are time-dependent, so this reduces the policy optimization to a problem with $2T$ parameters. We use a policy-gradient algorithm to approximate these threshold parameters. For the analysis of the policy-gradient algorithm we use the framework provided in \cite{chen2026LandscapePolicyOptimization}, in particular,  we use the Polyak-\L ojasiewicz-Kurdyka  (\PLK) condition to derive theoretical guarantees. In a numerical experiment we consider a 24-hour example with a daily cycle. 

The model considered in this work is stylized. There are several aspects of this model that are not reflected in reality. These include a linear charging/discharging dynamic with no constraints on the amount charged/discharged, perfect heat storage with no losses, no aging or maintenance of the system, fixed and proportional prices and stochastic independence of the heat balance random variables. While these are significant restrictions, our goal with this work is the establishment of a mathematically founded but structurally simple optimal policy, which could serve as a benchmark solution in more complicated settings or an easily implementable solution in a system with limited sensors. Adding more realistic assumptions would either complicate the threshold structure or destroy it altogether.

{\em Related literature.}
The optimal management of energy storage systems under uncertainty has been studied using a variety of stochastic control and optimization methods. \cite{grillo2015optimal}, for example,
formulate the scheduling of energy storage in power distribution networks with renewable generation as a Markov decision process and solve the resulting finite-state problem by stochastic dynamic programming. For end-user battery storage, \cite{van2013optimal} derive structural properties of the optimal control and show that the optimal policy has a threshold structure. There is also a substantial literature on the control of thermal energy storage systems. Early work by \cite{HenzeDodierKrarti1997} develops a predictive optimal controller for thermal energy storage, while 
\cite{HenzeSchoenmann2003} investigate reinforcement-learning control of
thermal storage systems. 
\cite{QuaggiottoVivianZarrella2021} study the operation of a district heating network with and without thermal storage using model predictive control. More recently, \cite{DillerEtAl2024} develop a dynamic
programming approach for the optimal control of a cascaded heat-pump system with thermal energy storage. 
\cite{ThorvaldsenBackeFarahmand2024} use stochastic dynamic programming for long-term operational planning of residential buildings with seasonal thermal energy storage. Particularly close to the application considered here, \cite{TakamWunderlich2025} formulate the cost-optimal
management of a residential heating system with geothermal energy storage under uncertain heat supply and demand as a stochastic optimal control problem and, after discretization, as a Markov decision process. In contrast to these works, our focus is on deriving an explicit structural characterization of the optimal policy in a stylized continuous-state and continuous-action storage model and subsequently exploiting this structure for policy optimization.

Threshold and control-band policies arise naturally in storage, inventory, and cash-balance problems. Besides the threshold structure established for battery storage by \cite{van2013optimal}, \cite{WalkerAbediKwon2022} design threshold-based storage policies
within a two-stage stochastic programming framework, and
\cite{HuDefourny2022} study price-threshold policies for battery operation under price uncertainty and battery aging. Related structural results are well known for commodity and gas storage; see, e.g.,
\cite{secomandi2010optimal} and  \cite{bauerle2016gas}. A particularly close mathematical analogy is provided by stochastic cash-balance models. 
\cite{EppenFama1969} study dynamic cash-balance problems with proportional costs for upward and downward adjustments, and 
\cite{FeinbergLewis2007} establish structural results for stochastic
cash-balance Markov decision processes, including threshold policies.
The two-sided lower/upper control-band structure obtained below can be viewed as an energy-storage counterpart of this classical cash-balance structure.

Our numerical method is related to the literature on policy-gradient methods for stochastic control and reinforcement learning. 
\cite{AgarwalEtAl2021} give general convergence and approximation results for policy-gradient methods in discounted Markov decision processes. \cite{BhandariRusso2024} identify structural conditions
under which non-convex policy-gradient objectives have no suboptimal
stationary points and satisfy gradient-dominance conditions. In a
finite-horizon stochastic LQ setting, 
\cite{HamblyXuYang2021} prove global linear convergence and also analyze a projected policy-gradient method for constrained policy classes. \cite{ReisingerStockingerZhang2023} study convergence of
policy-gradient-type methods for finite-horizon continuous-time stochastic control problems. Most closely related to our convergence analysis is \cite{chen2026LandscapePolicyOptimization}, who establish a general framework for the landscape of finite-horizon policy optimization with general state and action spaces and derive global convergence results under suitable Polyak-\L ojasiewicz-Kurdyka conditions. We verify and adapt these conditions for the threshold-policy class arising in the present storage problem.

{\em Our contribution.}
We consider a stylized finite-horizon prosumer heat-storage problem with
random heat supply and demand and a decision/implementation delay. The state and action spaces are continuous, and buying from and selling to the district heating network incur different proportional but fixed prices, while unmet demand is covered at a higher emergency price. Our first contribution is a structural result: we prove that in this model an optimal policy is characterized at every time by a lower and an upper storage threshold. Below the lower threshold it is optimal to buy energy and raise the storage level to the lower threshold; above the upper threshold it is optimal to sell energy and reduce the storage level to the upper threshold; between the two thresholds no transaction is made.
Under additional regularity assumptions on the heat-balance distributions, we further establish continuous differentiability of the value functions and Lipschitz continuity of their derivatives.

Our second contribution is an optimization method that exploits this structural result. Instead of optimizing over a general policy space, we parameterize the policy by the $2T$ time-dependent lower and upper thresholds and derive a pathwise stochastic-gradient estimator for the resulting finite-dimensional objective. We propose a projected stochastic policy-gradient algorithm and show, under explicit assumptions on the state and disturbance distributions, that the policy objective satisfies a constrained Polyak-\L ojasiewicz-Kurdyka inequality. This yields global finite-sample bounds for the expected optimality gap. In addition, a quadratic-growth estimate yields a corresponding mean-square bound for the distance between the computed threshold vector and the globally optimal threshold vector. Hence, by choosing the batch size and the number of policy-gradient iterations sufficiently large, both the objective error and the threshold error can be made arbitrarily small. 

{\em Outline of the paper.}
In the next section we formulate the storage problem as a Markov decision process. \cref{sec:optimal_policy} establishes the threshold structure of an optimal policy and regularity properties of the value functions. \cref{sec:algorithm} introduces the projected stochastic policy-gradient algorithm and derives finite-sample guarantees. \cref{sec:numerics} studies a 24-hour Gaussian example. 
The appendix contains auxiliary results and proofs

\section{The prosumer heat storage problem as a Markov decision process}
\begin{figure}
    \begin{minipage}[t]{0.39\textwidth}
        \centering
        \includegraphics[width=\textwidth]{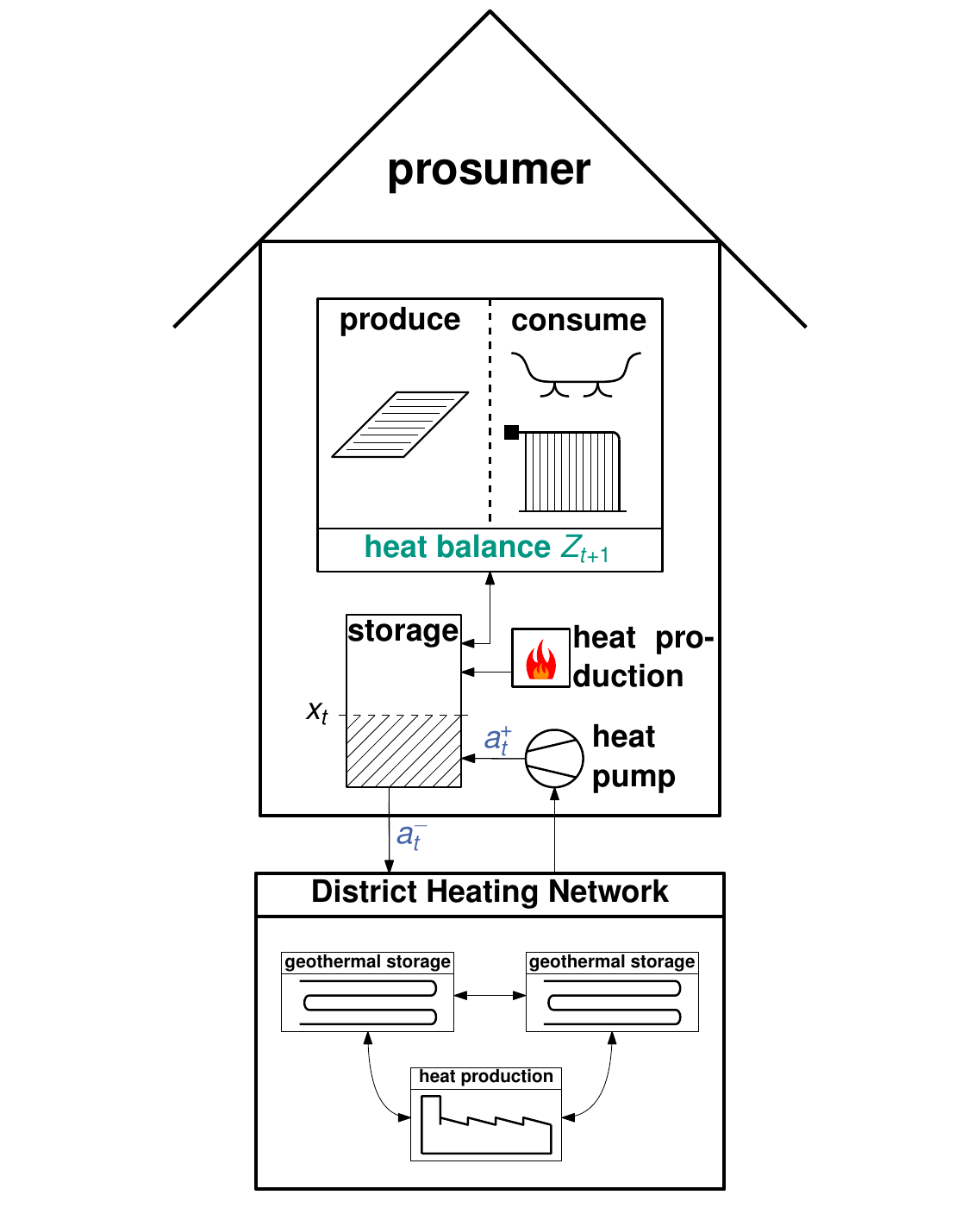}
	    \caption{Sketch of the Model}
        \label{fig:prosumer_model}
    \end{minipage} \hfill
    \begin{minipage}[t]{0.6\textwidth}
        \centering
        \includegraphics[width=\textwidth]{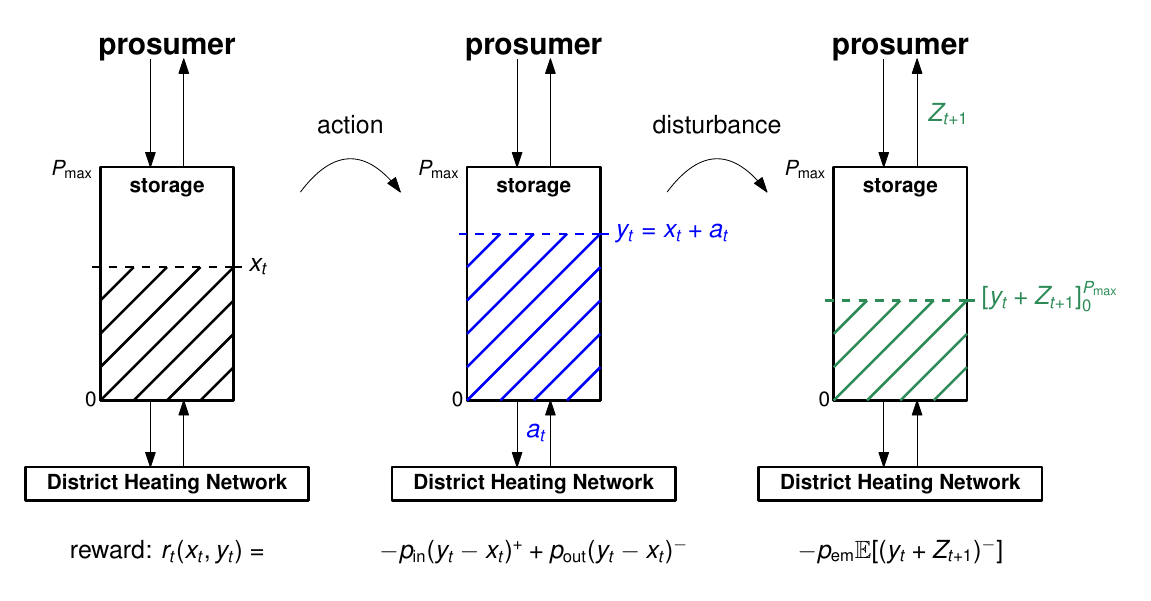}
    	\caption{Sketch of the Transition Function}
        \label{fig:state_transition}
    \end{minipage}
\end{figure}
First, we fix some notation. Let $a,b,x \in \R$ with $a \le b$. We define the positive and negative parts part by 
\[ x^+ \coloneqq \max\{x,0\} \qquad \text{and} \qquad x^-\coloneqq \max\{-x,0\} \]
and the clipping-function by
\[ [x]^b_a \coloneqq \min\{b,\max\{a,x\}\}.\]

We define the heat storage problem as an MDP as follows: Let $T\in \N$ be the time horizon and denote two sets of relevant time indices by $\timeindex \coloneqq \{0,1,\dots,T\}$ and $\timeindex' \coloneqq \{0,1,\dots,T-1\}$. The state of the MDP will be the state of charge of the hot water tank. Therefore, the state space is given by  $\statespace \coloneqq [0,\PufferStoremax]$ with $\PufferStoremax > 0$ being the maximal storage capacity. From time $t \in \timeindex'$ to time $t+1$ the transition can be decomposed into two parts (see \cref{fig:state_transition} for an illustration). Let $x \in \statespace$ be the state of charge of the hot water tank at time $t$. Firstly, an amount of heat energy $a$ for the next time unit can be bought from or sold to the DHN. If $a > 0$, then the amount $a$ of heat energy is bought from the DHN and charged to the hot water tank; if $a < 0$ then the amount $\abs{a}$ is sold to the DHN and discharged from the hot water tank. This results in a new state of the system, the so-called post-state $ y = x + a \in \statespace$. Thus,  the (dis-)charging action is implicitly given by $y-x$. Secondly, the prosumer provides some heat balance $Z_{t+1}$ at time $t+1$. If $Z_{t+1} > 0$, then the prosumer has excess heat production of $Z_{t+1}$ that needs to be stored; if $Z_{t+1} < 0$, then the prosumer requires an amount $\abs{Z_{t+1}}$ of heat energy from the storage.
If the new storage level $y$ does not cover the required heat demand, i.e.\ $y + Z_{t+1} < 0$, an emergency heat generator must be used to provide the difference. If $y + Z_{t+1} > \PufferStoremax$ the overflowing heat energy will be lost to the environment. We assume that $(Z_t)$ are independent random variables, where $Z_t$ has cumulative distribution function (cdf) $F_{t}.$ Note that the random variables are not necessarily identically distributed. Indeed, the distribution of $Z_t$ typically depends on the time of day and shows some periodicity.  Since there is some time delay between the 'order' of $a$ and its 'delivery', it is crucial to add $a$ to the storage before the actual need $Z_{t+1}$ is known.  Therefore, a state transition from the post-state $y$ at time $t$ to the state at time $t+1$ is given by
	 \[ X_{t+1}= K_t(y,Z_{t+1}) \coloneqq [y + Z_{t+1}]^{\PufferStoremax}_0.\]
Each part of the transition also contributes to the one-stage reward $r_t$ at time $t$. If heat energy is bought from the DHN, it is done so at the fixed price of $\pin> 0$ per unit of heat energy, i.e.\ the reward for heat energy bought is $-\pin (y-x)^+$. Similarly, the selling price is $\pout > 0$ per unit of heat energy, so the reward is $+\pout(y-x)^-$. Hence, for the first part the system gains the reward  
\[ \ri{t}(x,y) \coloneqq -\pin (y-x)^+ +\pout (y-x)^-\qquad (x,y\in\statespace, t \in \timeindex').\]
In the second part the system has to pay for the emergency heat production at price $\pem$ per unit of heat. The shortfall is $(y+Z_{t+1})^-$. As seen below in \cref{def: value function} we want to maximize the expected rewards. Therefore, we can replace the shortfall by its expectation value and the system gains the (expected) reward 
\[ \rii{t}(y) \coloneqq -\pem \EE\big[(y + Z_{t+1})^-\big] \qquad (y\in\statespace, t \in \timeindex'). \]
The full one stage reward is given by $r_t(x,y) \coloneqq r^{(1)}_t(x,y) + r^{(2)}_t(y)$, $x,y \in \statespace, t \in \timeindex'$. $r^{(2)}$ does not depend on $x$ so we will drop the $x$ from the notation and just write $r^{(2)}(y) = r^{(2)}(x,y)$. We assume that 
 $$0 < \pout \le \pin <\pem.$$
The inequality $\pout \le \pin$ prevents arbitrage and $\pin < \pem $ enforces the use of the emergency heat generator only for shortfalls. We define $$\psum \coloneqq \pout + \pin + \pem.$$ 
We assume that we can sell all heat energy that is left over at time $T$ at price $\pout$. So the terminal reward function is given by $r_T(x)=V_T(x) =  \pout x $, $x \in \statespace$. A policy in our model is a sequence $\pol=(\pol_0,\ldots,\pol_{T-1})$ of Markovian decision rules $\pol_t : \statespace \to \statespace$ mapping the state $x$ to the post-state $y$. We denote the set of all policies by $\polspace$. By classical MDP theory (see e.g.\ \cite{lerma96,puterman2014markov,bauerle2011MarkovDecisionProcesses}) it is enough to consider Markovian decision rules, since history dependent decision rules would not increase the value. Thus, when we introduce a canonical probability space and the state process $(X_t)$ on it, the policy value functions $V^\pol_t\colon \statespace \to\R$, $t \in \timeindex, \pol \in \polspace$, are given by
\begin{equation} \label{def: value function}
     V^\pol_t(x) \coloneqq  \EE^\pol_{t,x} \Big[\sum_{k=t}^{T-1} r_k(X_k,\pol_k(X_k))+ V_T(X_T)\Big],
\end{equation}
where $V^\pol_t(x)$ denotes the expected reward of the system when we start at time $t$ with storage $x$ and then act according to policy $\pol$. Similarly, we can define the (optimal) value functions  $V_t\colon \statespace \to\R$, $t \in \timeindex$ by  
\begin{align}\label{prob:MDP}
    V_t(x) = \sup_{\pol \in \polspace} V^\pol_t(x),
\end{align}
where $V_t(x)$ denotes the maximal expected reward of the system when we start at time $t$ with storage $x$. The  value functions  and the optimal state-post-state functions $Q_t\colon \statespace^2 \to \R$, $t \in \TimeIndex'$, satisfy the Bellman optimality equation recursively with
\begin{align*}
    V_T(x) &= \pout x,  && (x \in \statespace)\\
    Q_t(x,y) &=   r_t(x,y) + \EE\big[ V_{t+1}\big([Z_{t+1} + y]^{\PufferStoremax}_0 \big)\big] && (x,y \in \statespace, t \in \TimeIndex')\\
    V_t(x) &= \sup_{y \in [0,\PufferStoremax]}Q_t(x,y) && (x \in \statespace, t \in \TimeIndex').
\end{align*} 
Moreover, we define the functions $\holdt \colon \R \to \R$, $t \in \TimeIndex$, by $h_T \equiv 0$ and
	\begin{align*}
		\holdt(y) &= r^{(2)}(y) + \Exv[V_{t+1}(K_t(y,Z_{t+1}))] \\
        &=- \pem \Exv\left[(Z_{t+1}+y)^-\right]  + \Exv\left[V_{t+1}\big([Z_{t+1}+y]^\PufferStoremax_0\big)\right] && (y \in \R, t \in \TimeIndex').
	\end{align*}
and the functions $\hcht,\hdst\colon \R \to \R$, $t \in \TimeIndex$, by 
	\begin{align*}
	    \hcht(y) \coloneqq \holdt(y) - \pin y \qquad \text{and} \qquad \hdst(y) \coloneqq \holdt(y) - \pout y, &&(y \in \R, t \in \TimeIndex).
    \end{align*}
The functions $\hcht$ and $\hdst$ capture the effect of the post-action state $y$ on the value. More precisely, we can rewrite the optimal state-post-state function $Q_t$ by
\begin{align}
    Q_t(x,y) &\coloneqq  \ri{t}(x,y) + \rii{t}(y) + \Exv\big[ V_{t+1}\big([Z_{t+1} + y]^{\PufferStoremax}_0 \big)\big] \nonumber\\*
    &= \begin{cases}
        \hdst(y) + \pout x, &y < x,\\
        \holdt(x), & x=y,\\
        \hcht(y) + \pin x, &y > x.
    \end{cases} &&(x,y\in\statespace) \label{def:Qt by hcht and hdst}
\end{align}
Since the state space \(\mathcal X=[0,\PufferStoremax]\) is compact and the one-stage reward and transition operator are continuous, the Bellman maximization is attained. In particular, the functions \(Q_t(x,\cdot)\) are continuous on \(\mathcal X\). The explicit maximizing selector constructed below (see \cref{eq: optimal threshold policy (abs cont)}) is continuous except at at most two points and hence Borel measurable.

We will consider the derivative of functions that are defined over a convex, but not necessarily open subset of $\R^d$. In such cases, we will consider the derivative relative to the domain:

\begin{definition}
    Let $D \subseteq \R^d$ be convex, $x_0 \in D$. The function $f\colon D \to \R$ is called differentiable relative to $D$ at $x_0$ if there exists a vector $g \in \R^d$ such that
    \[ \lim_{\substack{x \to x_0\\x \in D\setminus \{x_0\}}} \frac{\abs{f(x)- f(x_0) - \langle g, x-x_0 \rangle}}{\norm{x-x_0}} = 0. \]
\end{definition}
We will use the derivative relative to the domain without explicitly mentioning it and just write derivative.  

In the next section we show that the optimal policy is of threshold type.

\section{Optimal policy}\label{sec:optimal_policy}
Before we explain the structure of the optimal policy, let us introduce some notations. We will denote the Lebesgue-measure on $\R$ with $\lambda$. With \enquote{almost everywhere} (a.e.) we mean with respect to the Lebesgue-measure (if not stated otherwise). Let $J \subseteq \R$ be an interval with either  $J = [a,b]$ for $a < b$ or $J =\R$. With $\LC(J)$ we denote the set of integrable functions on $J$ with respect to the Lebesgue-measure. With $\LCLoc(J)$ we denote the locally integrable functions on $J$.

\begin{definition}\label{def:AC}
	Let $-\infty < a < b < \infty$ and $G\colon [a,b] \to \R$. 
	\begin{itemize}
		\item[a)] 	We call $G$ \defi{absolutely continuous (a.c.)} and write $G \in \AC[a,b]$ if for every $\epsilon > 0$ there exists a $\delta > 0$ such that for all $a \le \alpha_1 < \beta_1 \le \alpha_2 < \beta_2 \le \dots \le \alpha_n < \beta_n \le b$ with $\sum_{k=1}^{n}(\beta_k-\alpha_k) < \delta$ we have
		\[ \sum_{k=1}^{n} \abs{G(\beta_k) - G(\alpha_k)} < \epsilon. \]
		\item[b)] We call $G$ \defi{absolutely continuously differentiable (a.c.d.)} and write $G \in \ACD[a,b]$ if $G$ is continuously differentiable on $[a,b]$ 
        and $G'$ is a.c.\ on $[a,b]$, i.e. $G'\in \AC[a,b]$.  
	\end{itemize}
\end{definition}
We will be mostly interested in functions defined on $\R$. Therefore we expand the definitions in \cref{def:AC}  similarly to \cite[p. 329, Definition 4.18 ]{elstrodt2018MassUndIntegrationstheorie} by requiring that the respective property holds on all compact intervals. We then write $\AC(\R)$ and $\ACD(\R)$ respectively.

\subsection{Structure in the general case}
Next, we need some mild assumptions on the random variables (r.v.) $(Z_t)_t$:

\begin{assumptions} \label{asm: assumtion for optimal policy abs cont}
	We assume that for  all $t \in \{1,\dots,T\}$,%
	\begin{enumerate}[(I)]
		\item the r.v. $Z_1, \dots, Z_T$ are independent and $\Exv\abs{Z_t} < \infty$,
		\item $F_t$ is the d.f.\ of $Z_t$ and there exists a probability density function (pdf) $f_t\colon \R \to [0,\infty)$ with $\Pmass^{Z_t} = f_t \lambda$.
	\end{enumerate}
\end{assumptions}
\begin{remark} \leavevmode

        Assumption \eqref{asm: assumtion for optimal policy abs cont} $\mathrm{(II)}$ is equivalent to $F_t \in \AC(\R)$ for all $t \in \{1,\dots,T\}$ (see \cite[Theorem 31.7]{billingsley1995ProbabilityMeasure} and the Radon–Nikodym theorem e.g.\ \cite[p.422, Theorem 32.2]{billingsley1995ProbabilityMeasure}).  
	
\end{remark}

Now we are prepared to investigate the structure of the optimal heat storage policy. It turns out that it is a so-called threshold policy. The main part of this section is now the following observation. The proof is postponed to \cref{sec:appendixA}.

\begin{theorem} \label{thm: optimal policies are threshold policies (abs cont)}
Suppose  \cref{asm: assumtion for optimal policy abs cont} is valid. Then for all $t\in\TimeIndex$ we have $V_t \in \ACD[0,\PufferStoremax]$, $V_t$ is concave and $V_t'\in [\pout, \pin]$.  Moreover, an optimal policy $\pol^*$ for the MDP is given by
	\begin{equation} \label{eq: optimal threshold policy (abs cont)}
		\pol^*_t(x) = [x]^\thds{t}_\thch{t} = \begin{cases}
			\thds{t}, & x > \thds{t},\\
			x,& \thch{t} \le x \le \thds{t},\\
			\thch{t}, & x < \thch{t},
		\end{cases}
	\end{equation}
	for $t \in \TimeIndex'$, where  $(\thch{t})_{t \in \TimeIndex'}$ and $(\thds{t})_{t \in \TimeIndex'}$ are two sequences of thresholds with $0 \le \thch{t} \le \thds{t} \le \PufferStoremax$. More precisely, it holds that 
	\[ \thch{t} \coloneqq \clipstorage{\tthcht} \quad \text{and} \quad \thds{t} \coloneqq \clipstorage{\tthdst}, \]
	where $\tthch{t}$ is the smallest solution $x$ of 
	\begin{align}\label{eq:ac_charge threshold (abs cont)}  \tag{charge}
	\pin = \pem F_{t+1}(-x) + \Exv\big[ V_{t+1}'(Z_{t+1} + x) \1_{\{0 \le Z_{t+1} + x \le \PufferStoremax\}}  \big] 
	\end{align}
	and $\tthds{t}$ is the largest solution $x$ of
	\begin{align} \label{eq:ac_discharg threshold (abs cont)} \tag{discharge}
	\pout =  \pem F_{t+1}(-x) + \Exv\big[ V_{t+1}'(Z_{t+1} + x)  \1_{\{0 \le Z_{t+1} + x \le \PufferStoremax\} }\big]. 
	\end{align}
\end{theorem}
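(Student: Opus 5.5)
We argue by backward induction on $t$, carrying the induction hypothesis
\[
V_{t+1}\in\ACD[0,\PufferStoremax],\qquad V_{t+1}\text{ concave},\qquad V_{t+1}'\in[\pout,\pin].
\]
The base case $V_T(x)=\pout x$ is trivial. Given the hypothesis at $t+1$, we first study $\holdt$ on $\R$.

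\textbf{Step 1: a formula for $\holdt'$.} Write
\[
\Exv\big[V_{t+1}([Z_{t+1}+y]_0^{\PufferStoremax})\big]=\int V_{t+1}\big([z+y]_0^{\PufferStoremax}\big) f_{t+1}(z)\,dz .
\]
Since $y\mapsto V_{t+1}([z+y]_0^{\PufferStoremax})$ is Lipschitz with derivative $V_{t+1}'(z+y)\1_{\{0\le z+y\le \PufferStoremax\}}$ (except at two points, which have measure zero under the density), dominated convergence lets us differentiate under the integral. Likewise
\[
\frac{d}{dy}\Big(-\pem\Exv\big[(Z_{t+1}+y)^-\big]\Big)=\pem F_{t+1}(-y).
\]
Hence
\[
\holdt'(y)=\pem F_{t+1}(-y)+\Exv\big[V_{t+1}'(Z_{t+1}+y)\1_{\{0\le Z_{t+1}+y\le\PufferStoremax\}}\big]=:g_t(y).
\]
This $g_t$ is exactly the right-hand side of \eqref{eq:ac_charge threshold (abs cont)} and \eqref{eq:ac_discharg threshold (abs cont)}. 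Using the substitution $u=z+y$, the second term equals $\int_0^{\PufferStoremax}V_{t+1}'(u)f_{t+1}(u-y)\,du$. This shows that $g_t$ is continuous, so $\holdt\in C^1(\R)$.

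\textbf{Step 2: concavity and limits of $g_t$.} We need $g_t$ nonincreasing. Heuristically, $g_t$ is the derivative of $y\mapsto \Exv[\phi(Z_{t+1}+y)]$, where
\[
\phi(u)=\pem u\ \ (u<0),\qquad \phi(u)=V_{t+1}(u)-V_{t+1}(0)\ \ (0\le u\le \PufferStoremax),\qquad \phi(u)\text{ constant}\ \ (u>\PufferStoremax).
\]
This $\phi$ is concave because its slopes are $\pem\ge \pin\ge V_{t+1}'\ge \pout>0$ on successive pieces, dropping to $0$ beyond $\PufferStoremax$. Since $\phi$ is concave, its expectation under a shift is concave, so $g_t$ is nonincreasing. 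Moreover, $g_t(y)\to\pem$ as $y\to-\infty$ and $g_t(y)\to 0$ as $y\to\infty$. As $0<\pout\le\pin<\pem$, the sets $\{g_t=\pin\}$ and $\{g_t=\pout\}$ are nonempty closed intervals. Let $\tthcht$ be the smallest point of the first and $\tthdst$ the largest point of the second; monotonicity of $g_t$ gives $\tthcht\le\tthdst$.

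\textbf{Step 3: the optimal policy and $V_t$.} By \eqref{def:Qt by hcht and hdst}, $\hcht$ and $\hdst$ are concave with derivatives $g_t-\pin$ and $g_t-\pout$. Therefore the maximum of $\hcht$ over $[x,\PufferStoremax]$ is attained at $\max\{x,\thcht\}$, and the maximum of $\hdst$ over $[0,x]$ is attained at $\min\{x,\thdst\}$, where we clip to $\statespace$. Comparing the two branches with $\holdt(x)$ gives the following case split:
\begin{itemize}
\item for $x<\thcht$ the best choice is $y=\thcht$;
\item for $x>\thdst$ the best choice is $y=\thdst$;
\item in between, staying put is optimal, because $\hcht$ is decreasing to the right and $\hdst$ is increasing to the left.
\end{itemize}
Consequently
\[
V_t(x)=\begin{cases}\hcht(\thcht)+\pin x, & x<\thcht,\\ \holdt(x), & \thcht\le x\le\thdst,\\ \hdst(\thdst)+\pout x, & x>\thdst.\end{cases}
\]
This $V_t$ is $C^1$: at the thresholds the slopes match, $g_t(\thcht)=\pin$ and $g_t(\thdst)=\pout$, with the clipped cases checked via the sign of $g_t-\pin$ at $0$ and the analogous check at $\PufferStoremax$. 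It is concave, since it is piecewise concave with nonincreasing derivative. Its derivative lies in $[\pout,\pin]$, because $g_t\in[\pout,\pin]$ on $[\thcht,\thdst]$.

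\textbf{Main obstacle.} The main difficulty is proving $V_t'\in\AC$, i.e.\ that $g_t$ is absolutely continuous on compacts. The representation
\[
g_t(y)=\pem F_{t+1}(-y)+\int_0^{\PufferStoremax}V_{t+1}'(u)f_{t+1}(u-y)\,du
\]
has first term AC by Assumption (II). For the second term, one route is to integrate by parts using $V_{t+1}'\in\AC$. This yields
\[
V_{t+1}'(\PufferStoremax)F_{t+1}(\PufferStoremax-y)-V_{t+1}'(0)F_{t+1}(-y)-\int_0^{\PufferStoremax}V_{t+1}''(u)F_{t+1}(u-y)\,du .
\]
The boundary terms are AC in $y$. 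The integral term is Lipschitz in $y$, because $F_{t+1}$ is monotone, bounded and $V_{t+1}''\in \LC$. Combined with the piecewise definition of $V_t'$, which is constant outside $[\thcht,\thdst]$, this gives $V_t\in\ACD$. That completes the induction and shows that $\pol^*$ is optimal and Borel measurable.
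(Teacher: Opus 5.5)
Your overall route is the paper's: backward induction, differentiation of $h_t$ under the expectation, thresholds as the extreme points of the level sets of $h_t'$, comparison of the two branches of $Q_t$, and the piecewise formula for $V_t$. Your concavity argument, which writes $h_t(y)=\E[\phi(Z_{t+1}+y)]+V_{t+1}(0)$ with $\phi$ concave, is a tidy substitute for the paper's sign analysis of $h_t''$ in \eqref{eq:A2}.

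The problem is the step you yourself single out as the main obstacle. You assert that $I(y)=\int_0^{\PufferStoremax}V_{t+1}''(u)F_{t+1}(u-y)\,du$ is Lipschitz because $F_{t+1}$ is monotone and bounded and $V_{t+1}''\in L^1$. These facts only give $|I(y_2)-I(y_1)|\le\int_0^{\PufferStoremax}|V_{t+1}''(u)|\,|F_{t+1}(u-y_2)-F_{t+1}(u-y_1)|\,du$. That bound is $O(|y_2-y_1|)$ only if $F_{t+1}$ is Lipschitz, i.e.\ only if $f_{t+1}$ is bounded. This is exactly the extra hypothesis of \cref{asm: assumtion for optimal policy L}, which the paper needs for \cref{rem: properties of the value function under Lipschitz assumption}.

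Under \cref{asm: assumtion for optimal policy abs cont} alone the claim is false in general. On the interior of the inaction interval at time $T-1$ one has $V_{T-1}''=h_{T-1}''=-(\pem-\pout)f_T(-y)-\pout f_T(\PufferStoremax-y)$, which is unbounded wherever $f_T$ is. If $f_{T-1}$ also has a sufficiently strong singularity at a matching location, then the a.e.\ derivative $-\int_0^{\PufferStoremax}V_{T-1}''(u)f_{T-1}(u-y)\,du$ of the integral term at time $T-2$ is unbounded.

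What you actually need is only absolute continuity, and for that you need the Fubini step the paper uses. Since $F_{t+1}(u-y_1)-F_{t+1}(u-y_2)=\int_{y_1}^{y_2}f_{t+1}(u-s)\,ds$, Tonelli gives $I(y_2)-I(y_1)=\int_{y_1}^{y_2}\psi(s)\,ds$. Here $\psi(s)=-\int_0^{\PufferStoremax}V_{t+1}''(u)f_{t+1}(u-s)\,du\ge 0$ and $\int_{\R}\psi(s)\,ds\le\int_0^{\PufferStoremax}|V_{t+1}''(u)|\,du<\infty$. So $I$ is locally absolutely continuous, though in general not Lipschitz. With this replacement the induction closes.

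A minor point concerns two degenerate cases: $\tthch{t}>\PufferStoremax$ (so $\thch{t}=\thds{t}=\PufferStoremax$) and $\tthds{t}<0$ (so $\thch{t}=\thds{t}=0$). In these cases it is not true that $g_t\in[\pout,\pin]$ on $[\thch{t},\thds{t}]$. There $V_t$ is linear with slope $\pin$ resp.\ $\pout$. So state $V_t'$ on the open pieces and handle the endpoints by one-sided derivatives, as the paper does.
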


Thus, the optimal policy works as follows: If the charge $x$ of the hot water tank at time $t$ is above $\thds{t}$, energy is sold to the DHN to bring it down to $\thds{t}$. If $x$ is below $\thch{t}$, then the remaining energy must be bought to bring it up to $\thch{t}$. If $x$ is between the thresholds we do nothing.

In the proof we use the structure of the problem and analyze the effects of the post-action $y$ on the value of the problem, captured by the function $h_t$.

\subsection{Optimal policy under Lipschitz assumptions}
In what follows, we will consider some slightly stronger assumptions which are easily satisfied in applications and yield stronger structural properties of the value functions. This is later needed to show convergence of our algorithms.

\begin{definition}
	Let $a<b$, let $g\colon[a,b]\to\R$ be a function, and let $L>0$. We say $g$ is \defi{Lipschitz (continuous)} if 
	\[ \Lip(g) \coloneqq \sup_{\substack{x,y \in [a,b],\\ x \ne y}} \frac{\abs{g(x) - g(y)}}{\abs{x-y}} < \infty. \]
	In this case we call $\Lip(g)$ the \defi{Lipschitz-constant} of $g$.
\end{definition}

\begin{assumptions} \label{asm: assumtion for optimal policy L}
	We assume that for  all $t \in \{1,\dots,T\}$,%
	\begin{enumerate}[(I)]
		\item the r.v. $Z_1, \dots, Z_T$ are independent and $\Exv\abs{Z_t} < \infty$,
		\item $F_t$ is the d.f.\ of $Z_t$ and there exists a probability density function (pdf) $f_t\colon \R \to [0,\infty)$ with $\Pmass^{Z_t} = f_t \lambda$. Further, there exists $L_Z<\infty$ such that $f_t(z)\le L_Z$ for a.e.\ $z\in\R$ and \(t=1,\ldots,T\).
	\end{enumerate}
\end{assumptions}

The next theorem adds to the previous one a statement about the Lipschitz continuity of $V_t'$. It uses the slightly stronger  \cref{asm: assumtion for optimal policy L} which also requires the density of the heat balance r.v.\ to be bounded. The proof is similar to the proof of  \cref{thm: optimal policies are threshold policies (abs cont)} and is sketched in \cref{sec:appendixB}.

\begin{theorem} \label{rem: properties of the value function under Lipschitz assumption}
	Suppose that  \cref{asm: assumtion for optimal policy L} is satisfied. 
   Then the value function $V_t$ is continuously differentiable on $[0,\PufferStoremax]$ and the derivative $V_t'$ is Lipschitz continuous on $[0,\PufferStoremax]$ for all $t \in \{0,\dots, T\}$. 
    Moreover, for $t \in \TimeIndex'$  and $L_{t+1} \coloneqq \Lip(F_{t+1})\le L_Z$ we get 
	\begin{equation} \label{eq: Lipschitz constant of value function (abs cont)}
		\Lip(V'_t) \le  \pem L_{t+1}\le \pem L_Z.
	\end{equation}
\end{theorem}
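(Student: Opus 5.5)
We proceed by backward induction on $t$, reusing the structure established in \cref{thm: optimal policies are threshold policies (abs cont)}. Since \cref{asm: assumtion for optimal policy L} implies \cref{asm: assumtion for optimal policy abs cont}, that theorem already gives that every $V_t$ is concave and $C^1$, with $V_t'\in[\pout,\pin]$, and that the threshold policy \eqref{eq: optimal threshold policy (abs cont)} is optimal. The new content is the Lipschitz bound on $V_t'$.

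\textbf{Base case and derivative of $\holdt$.} For $t=T$ we have $V_T'\equiv \pout$, so $\Lip(V_T')=0$. For the inductive step, suppose $V_{t+1}\in C^1$ with $V_{t+1}'\in[\pout,\pin]$. Differentiating $\holdt$ under the expectation (justified by dominated convergence, since all integrands are bounded by $\psum$) gives
\[
\holdt'(y)=\pem F_{t+1}(-y)+\Exv\big[V_{t+1}'(Z_{t+1}+y)\1_{\{0\le Z_{t+1}+y\le \PufferStoremax\}}\big].
\]
This is exactly the right-hand side of \eqref{eq:ac_charge threshold (abs cont)}.

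\textbf{Form of $V_t'$.} Using \eqref{def:Qt by hcht and hdst} and the threshold policy, $V_t(x)=\hcht(\thcht)+\pin x$ for $x<\thcht$, $V_t(x)=\holdt(x)$ on $[\thcht,\thdst]$, and $V_t(x)=\hdst(\thdst)+\pout x$ for $x>\thdst$. Hence $V_t'=\pin$ on the left piece, $V_t'=\holdt'$ on the middle piece, and $V_t'=\pout$ on the right piece. At an interior threshold, the first-order conditions \eqref{eq:ac_charge threshold (abs cont)}/\eqref{eq:ac_discharg threshold (abs cont)} give $\holdt'(\thcht)=\pin$ and $\holdt'(\thdst)=\pout$. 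So $V_t'$ is continuous, and it is piecewise given by two constants and $\holdt'$.

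\textbf{Gluing Lipschitz pieces.} A continuous function that is Lipschitz with constant $L$ on each of finitely many adjacent closed intervals is Lipschitz with constant $L$ on their union. It therefore suffices to prove $\Lip(\holdt')\le\pem L_{t+1}$ on $[\thcht,\thdst]$. A bound on all of $[0,\PufferStoremax]$ is not needed, which matters for the next step.

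\textbf{Main obstacle: a sharp Lipschitz bound for $\holdt'$.} The naive estimate treats the two terms of $\holdt'$ separately and gives a constant like $(\pem+2\pin)L_{t+1}+\Lip(V_{t+1}')$. This is too large and would also accumulate over $t$. The plan is instead to write $G(y):=\holdt'(y)$ as an integral against $F_{t+1}$. Substituting $u=z+y$,
\[
G(y)=\pem F_{t+1}(-y)+\int_0^{\PufferStoremax}V_{t+1}'(u)\,f_{t+1}(u-y)\,du .
\]
Integrating by parts, using that $V_{t+1}'$ is absolutely continuous (from the induction hypothesis / $\ACD$ property), gives
\[
G(y)=(\pem-V_{t+1}'(0))F_{t+1}(-y)+V_{t+1}'(\PufferStoremax)F_{t+1}(\PufferStoremax-y)-\int_0^{\PufferStoremax}V_{t+1}''(u)\,F_{t+1}(u-y)\,du .
\]
By concavity $V_{t+1}''\le0$, and $F_{t+1}$ is nondecreasing. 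For $y_1<y_2$ each term of $G(y_2)-G(y_1)$ then has a definite sign:
\[
0\le -(G(y_2)-G(y_1)) = (\pem - V'_{t+1}(0))\Delta_0+V'_{t+1}(\PufferStoremax)\Delta_{P}+\int_0^{\PufferStoremax}|V''_{t+1}(u)|\,\Delta_u\,du,
\]
where $\Delta_u:=F_{t+1}(u-y_1)-F_{t+1}(u-y_2)\in[0,L_{t+1}(y_2-y_1)]$. Since $\int_0^{\PufferStoremax}|V''_{t+1}|=V'_{t+1}(0)-V'_{t+1}(\PufferStoremax)$, the three weights sum to $(\pem-V'_{t+1}(0))+V'_{t+1}(\PufferStoremax)+(V'_{t+1}(0)-V'_{t+1}(\PufferStoremax))=\pem$. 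Hence $|G(y_2)-G(y_1)|\le\pem L_{t+1}|y_2-y_1|$.

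This telescoping of the weights is the crux. It also shows that the bound does not compound in $t$, since it uses only concavity and the range of $V_{t+1}'$, not $\Lip(V_{t+1}')$. Together with the gluing step this yields \eqref{eq: Lipschitz constant of value function (abs cont)} and closes the induction. The remaining bookkeeping concerns the boundary cases $\thcht\in\{0,\PufferStoremax\}$ and $\thdst\in\{0,\PufferStoremax\}$, where some pieces are empty.
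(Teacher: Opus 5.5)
Your proposal is correct and follows essentially the same route as the paper. Both arguments use the integration-by-parts representation of $\holdt'$, observe that the nonnegative weights $\pem - V_{t+1}'(0)$, $V_{t+1}'(\PufferStoremax)$ and $\int_0^{\PufferStoremax}\abs{V_{t+1}''}$ sum to $\pem$, and then glue the constant pieces to $\holdt'$ across the thresholds. The only cosmetic difference is that you use the common sign of the increments (monotonicity of $F_{t+1}$) where the paper applies the triangle inequality with absolute values; both give the same constant $\pem L_{t+1}$.
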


\section{Approximate solution algorithm via policy gradients}\label{sec:algorithm}
The proofs of statements in this section  are postponed to  \cref{sec:appendixC}. For the theoretical results, we need the following assumptions:
\begin{assumptions}\label{ass:PG} Let $L_\rho, \alpha_0, L_Z, \alpha_Z, \mu_Z > 0$.
    \begin{enumerate}[(I)] 
		\item the r.v.\ $X_0, Z_1, \dots, Z_T$ are independent and $\EE\abs{X_0}, \EE\abs{Z_{1}}, \dots, \EE\abs{Z_T} < \infty$. 
        \item The restriction of the law of \(X_0\) to \((0,\PufferStoremax )\) admits a probability density function (pdf) $f_{X_0}$, which is bounded from above by \(L_\rho\), i.e.\ 
        \[ f_{X_0}(x) \le L_\rho, \qquad x \in \statespace\setminus\{0,\PufferStoremax\}.  \]
        Moreover, it holds $\PP(X_0=0)\ge\alpha_0$ and $\PP(X_0=\PufferStoremax)\ge\alpha_0$.
        \item For all $t \in \TimeIndex'$ the law of $Z_{t+1}$ admits a pdf $f_{t+1}$ on $\R$ which is bounded from above  by $L_Z$ on $\R$ and bounded from below by $\mu_Z$ on $[-\PufferStoremax,\PufferStoremax]$, i.e.\
        \[ f_{t+1}(y) \le L_Z \text{ for } y \in \R \text{ and } f_{t+1}(y) \ge \mu_Z \text{ for } y \in [-\PufferStoremax,\PufferStoremax]. \]
        Moreover, it holds $\Pmass(Z_{t+1} \le -\PufferStoremax) \ge \alpha_Z$ and $\Pmass(Z_{t+1} \ge \PufferStoremax) \ge \alpha_{Z}$ for all \(t \in \TimeIndex'\). 
	\end{enumerate}
\end{assumptions}

In what follows define the constants
\begin{equation}
\alpha \coloneqq \min\{\alpha_0,\alpha_Z\},
\qquad
\Lambda \coloneqq \max\{L_\rho,L_Z,1\},
\label{eq:alpha-Lambda}
\end{equation}
and
\begin{equation}
\kappa \coloneqq \pem \mu_Z >0.
\label{eq:kappa}
\end{equation}

Motivated by the structural result of the optimal policy, we introduce the parameter space
\[ \thsspace \coloneqq \bigtimes_{t \in \TimeIndex'} \thsspace_t \quad \text{with} \quad \thsspace_t \coloneqq \{ (\lt,\ut) \in \R^2 \mid 0 \le \lt \le \ut \le \PufferStoremax \} \]
and the space of parameterized threshold policies
\[ \Pi_\thsspace \coloneqq \{ \pol^\ths \in \polspace \mid \ths \in \thsspace\} \quad \text{with} \quad \pol_t^\ths(x) \coloneqq [x]^{\ut_t}_{\lt_t} \quad (t \in \TimeIndex').  \]
For a parameterized policy $\pol^\ths$, we denote the parameterized policy value function  by $$V^\ths_t \coloneqq V^{\pol^\ths}_t, \qquad t \in \TimeIndex,$$
and the parameterized $h$-functions $\holdt^\ths\colon \R \to \R$  by $\hpara{T}(y) = 0$ and, 
    \[ \hpara{t}(y) \coloneqq \rii{t}(y) + \EE[\Vpara{t+1}(\clipstorage{y+\Dist{t+1}})], \qquad t \in \TimeIndex'. \]
By \cref{thm: optimal policies are threshold policies (abs cont)} we know that an optimal policy is contained in $\Pi_\thsspace$, i.e.\  $V_t(x) = \sup_{\ths\in \thsspace} V^\ths_t(x)$ for all $x \in \statespace$. Thus, we only have to optimize over the thresholds $\thcht$ and $\thdst$. However, in a general case, this becomes quite difficult. In this section, we state a gradient ascent algorithm that, for a given number of realizations of the disturbance sequence $(Z_{t+1})$, approximates the optimal thresholds. At the end of the section we will show convergence in expectation for the approximated solution. We start by defining the function we want to maximize, the objective function $\lo$:
\begin{definition}
    Let $\initdist$ be a distribution on $\statespace$ and $X_0 \sim \initdist$ be a randomized initial state of the system. We define the objective function $\lo \colon \thsspace \to \R$ by 
    \[ \lo(\ths) \coloneqq \EE_{\rho}[V^\ths_0(X_0)]\quad \mbox{ and } \lo^* = \sup_{\eta\in\Theta} \lo(\eta).\]
\end{definition}
\begin{remark}
    Note that the randomization with the initial state does not change the optimal policy. Indeed, we have $V^\vartheta_0(x)\le V_0(x)$ and $V^{\vartheta^*}_0(x)= V_0(x).$ So,
    \begin{align*}
        \ell(\vartheta) \le \EE_{\rho}[V_0(X_0)]=\ell(\vartheta^*),
    \end{align*}
    and hence $\vartheta^*$ is a maximizer independent of the choice of $\rho.$
\end{remark}

We want to maximize the expected reward of the system, i.e.\ the function $\lo$, by searching along its gradient $\nabla \lo$. We derive the following formulae for the gradient of $\lo$:

\begin{lemma}\label{lem:derivative in expectation}
    Under \cref{ass:PG} it holds:
    \begin{itemize}
        \item[a)] For all $\ths \in \thsspace$ the functions $\hpara{t}$, $t \in \TimeIndex$, are differentiable on $\R$ with $(\hpara{T})'(y) = 0$ and 
            \begin{align}
                (\hpara{T-1})'(y) &= \pem \PP(y + Z_{T} < 0) + \pout\PP(0<y+Z_{T}<\PufferStoremax), \nonumber \\
                (\hpara{t})'(y) &=  \begin{aligned}[t]
                &\pem \PP(y + Z_{t+1} < 0)\\
               &+\pin\PP(0<y+Z_{t+1}<\uth_{t+1})\\
                &+\EE\!\left[
                  (\hpara{t+1})'(y+Z_{t+1})
                  \1_{\{\uth_{t+1}<y+Z_{t+1}<\oth_{t+1}\}}
                \right]\\
                &+\pout\PP(\oth_{t+1}<y+Z_{t+1}<\PufferStoremax), && y \in \R,\ t \le T-2.
             \end{aligned} \label{eq: aprime}
            \end{align}
            We use the notation 
            $$\apara{t} \coloneqq (\hpara{t})' \qquad  t \in \TimeIndex.$$
        \item[b)] $\lo$ is differentiable on $\thsspace$ with
        \[\nabla \lo (\ths) = \big( \partial_{\lt_0} \lo(\ths), \partial_{\ut_0} \lo(\ths), \dots, \partial_{\lt_{T-1}} \lo(\ths) , \partial_{\ut_{T-1}} \lo(\ths) \big), \qquad \ths \in \thsspace,  \] 
        where, for $t \in \TimeIndex'$,
        \begin{align}
            \partial_{\lt_t}\lo(\ths) &= \PP_\rho^\ths(X_t\le \lt_t,\ X_t\neq\Pmax) \ \big( \apara{t}(\lt_t)-\pin \big), \label{eq:grad-lower} \\
            \partial_{\ut_t} \lo(\ths) &= \PP_\rho^\vartheta(X_t\ge \ut_t,\ X_t\neq0) \  \big( \apara{t}(\ut_t)-\pout\big),\label{eq:grad-upper}
        \end{align}
        where $\PP^\ths_\rho(X_t \in \cdot)$ is the distribution of the state $X_t$ at time $t$ under the initial distribution $X_0 \sim \rho$ and the policy $\pi^\ths$.
    \end{itemize}
\end{lemma}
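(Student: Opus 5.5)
Part a) goes by backward induction on $t$, and part b) by a pathwise argument. For part a), write
\[ \hpara{t}(y) = -\pem\EE[(y+Z_{t+1})^-] + \EE\big[\Vpara{t+1}(\clipstorage{y+Z_{t+1}})\big]. \]
Under a threshold policy, $\Vpara{t+1}(x)=\hpara{t+1}([x]^{\ut_{t+1}}_{\lt_{t+1}}) + \ri{t+1}(x,[x]^{\ut_{t+1}}_{\lt_{t+1}})$. This function is piecewise given by:
\begin{itemize}
\item $\hpara{t+1}(\lt_{t+1})-\pin(\lt_{t+1}-x)$ for $x<\lt_{t+1}$,
\item $\hpara{t+1}(x)$ for $x\in[\lt_{t+1},\ut_{t+1}]$,
\item $\hpara{t+1}(\ut_{t+1})+\pout(x-\ut_{t+1})$ for $x>\ut_{t+1}$.
\end{itemize}
Assuming inductively that $\hpara{t+1}$ is differentiable with a bounded derivative, $\Vpara{t+1}$ is Lipschitz on $\statespace$. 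It is differentiable except possibly at $\lt_{t+1},\ut_{t+1}$, with derivative $\pin$, $\apara{t+1}$, $\pout$ on the three pieces. For $t=T-1$ we simply have $V_T'=\pout$. The map $y\mapsto \clipstorage{y+z}$ is Lipschitz with constant $1$ and has derivative $\1_{\{0<y+z<\PufferStoremax\}}$ off the kinks.

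Next I differentiate under the expectation by dominated convergence. The difference quotients are bounded by $\Lip(\Vpara{t+1})$, resp.\ by $\pem$ for the shortfall term. For fixed $y$, the pathwise derivative exists except when $y+Z_{t+1}\in\{0,\lt_{t+1},\ut_{t+1},\PufferStoremax\}$. This set has probability zero because $Z_{t+1}$ has a density (\cref{ass:PG}(III)). Collecting the indicator terms gives \eqref{eq: aprime}, with the boundary case handled by $V_T'=\pout$. The bound $|\apara{t}|\le \psum$ propagates the induction, so the induction hypothesis is preserved.

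For b), I represent $\lo(\ths)=\EE[\sum_t r_t(X_t,\pol^\ths_t(X_t))+\pout X_T]$ along a fixed path $(X_0,Z_1,\dots,Z_T)$. Equivalently I use the telescoping identity: changing only $\lt_t$ affects the value only through the decision at time $t$. Concretely, condition on $X_t$ and write
\[ \lo(\ths)=\EE_\rho^\ths\big[\ldots+ \ri{t}(X_t,[X_t]^{\ut_t}_{\lt_t})+\hpara{t}([X_t]^{\ut_t}_{\lt_t})\big]. \]
Here the law of $X_t$ depends only on $\ths_{<t}$, and $\hpara{t}$ depends only on $\ths_{>t}$. So the partial derivative in $\lt_t$ only hits the inner expression $g(x,\lt_t)$. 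This expression equals $-\pin(\lt_t-x)+\hpara{t}(\lt_t)$ for $x<\lt_t$ and does not depend on $\lt_t$ for $x>\lt_t$. Its derivative on $\{x<\lt_t\}$ is $\apara{t}(\lt_t)-\pin$, and the difference quotients are bounded by $\pin+\sup|\apara{t}|$.

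The main obstacle is the atoms of $X_t$. Because of the clipping, the law of $X_t$ has atoms at $0$ and $\PufferStoremax$, and possibly at thresholds of earlier times when $t=0$ or through $\rho$. On the other hand, $X_t$ has a density on $(0,\PufferStoremax)$ for $t\ge1$ (convolution with $f_t$), and for $t=0$ by \cref{ass:PG}(II). So the event $\{X_t=\lt_t\}$ has probability zero unless $\lt_t\in\{0,\PufferStoremax\}$. Hence the one-sided derivatives agree, giving $\PP(X_t<\lt_t)=\PP(X_t\le\lt_t,X_t\ne\PufferStoremax)$, up to null sets, times $(\apara{t}(\lt_t)-\pin)$.

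At the boundary cases I interpret the derivative relative to $\thsspace$. If $\lt_t=0$, only increases are allowed, and the atom at $0$ is included, matching ``$X_t\le\lt_t$''. If $\lt_t=\PufferStoremax$, then necessarily $\ut_t=\PufferStoremax$, and the atom at $\PufferStoremax$ is excluded since the state is not moved there. The upper threshold is symmetric, with $\pout$ in place of $\pin$. Finally, continuity of these partials in $\ths$ must be checked to get (relative) differentiability of $\lo$ rather than just partial derivatives. For this I will show that $\apara{t}$ is jointly continuous in $(y,\ths)$ by the same dominated convergence argument. I will also show that $\PP^\ths_\rho(X_t\le \cdot)$ varies continuously away from the atoms.
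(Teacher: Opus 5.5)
Your proof is correct, but it is organized differently from the paper's. The paper obtains this lemma as a corollary of the sample-path result \cref{lem:pathhderivative}. It constructs full-measure sets $B^\vartheta_t(y)$ and $\widetilde B(\vartheta)$ that keep the initial state and every shifted threshold $\underline\vartheta_a+\sum z_{k+1}$, $\overline\vartheta_a+\sum z_{k+1}$ away from all later kink points. On these sets it shows that $\widehat h^{\vartheta,e}_t$ and $\hat\ell^e$ are differentiable, with derivatives $\hat a^{\vartheta,e}_t$ and $G(\vartheta,e)$. It then exchanges derivative and expectation using Lipschitz continuity and dominated convergence. The product form in \eqref{eq:grad-lower}--\eqref{eq:grad-upper} comes from independence of $\hat x^{\vartheta,e}_t$ and $\hat a^{\vartheta,e}_t$.

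You instead stay with the expected functions. In a), the only null event you need is $\{y+Z_{t+1}\in\{0,\underline\vartheta_{t+1},\overline\vartheta_{t+1},P_{\max}\}\}$, because $V^\vartheta_{t+1}$ is already known to be Lipschitz and differentiable off its thresholds. In b), conditioning on $X_t$ gives the product form directly, and you handle the atoms at $0$ and $P_{\max}$ by relative one-sided derivatives. This needs much less null-set bookkeeping. However, it produces $\nabla\ell$ one partial derivative at a time, so the continuity-of-partials step you flag is genuinely needed. The pathwise route inherits differentiability of $\ell$ from that of $\hat\ell^e$. It also yields $\nabla\ell(\vartheta)=\EE[G(\vartheta,E)]$ at the same time, which \cref{lem:unbiased} uses later.

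When you carry out that step, check continuity of $\vartheta\mapsto\PP^\vartheta_\rho(X_t\le\underline\vartheta_t,\,X_t\ne P_{\max})$ \emph{up to} $\underline\vartheta_t\in\{0,P_{\max}\}$, not only away from the atoms. Differentiability at boundary points of $\Theta$ is part of the claim. The continuity does hold:
\begin{itemize}
\item the interior mass in $(0,\underline\vartheta_t]$ is at most $\Lambda\underline\vartheta_t$;
\item the left limit at $P_{\max}$ is $\PP(X_t<P_{\max})$;
\item for $t\ge 1$, $\PP^\vartheta_\rho(X_t=0)=\EE[F_t(-Y^\vartheta_{t-1})]$ is continuous in $\vartheta_{<t}$.
\end{itemize}
The upper threshold is analogous.

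The coordinate-wise mean-value argument is legitimate on $\Theta$ for two reasons. Within each block, one of the two orders of updating $(\underline\vartheta_t,\overline\vartheta_t)$ always keeps the intermediate point in $\Theta_t$. And at interior points of such a coordinate segment, the moving threshold lies strictly inside $(0,P_{\max})$ and strictly off the other threshold, which is exactly where your two-sided formula applies.
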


However, we cannot directly evaluate $\lo(\ths)$ or $\nabla \lo(\ths)$, due to the unknown distribution of the collected source of randomness in the system, given by the random vector $\Episode \coloneqq (X_0, Z_1, \dots, Z_T)$ with values in $\episodespace \coloneqq \statespace \times \R^T \subseteq \R^{T+1}$. Therefore, we are interested in the  value of the objective function and its gradient along a sample path, i.e.\ a realization $\episode \coloneqq (\initstate, \dist{1}, \dots, \dist{T}) $ of $\Episode$.  
\begin{definition} \label{def: realized value function}
    Let $\episode \in \episodespace$ be as sample path and $\ths \in \thsspace$ be a policy parameter. The realized  value functions $\Vsmp{t}\colon \statespace \to \R$, $t \in \TimeIndex$,  are defined recursively by 
    \begin{align*}
        \Vsmp{T}(x) &\coloneqq \pout x\\
        \Vsmp{t}(x) &\coloneqq r_t^{(1)}\big(x, \pol^\ths_t(x)\big) + \riismp{t}\big( \pol^\ths_t(x)\big) + \Vsample^{\ths, \episode}_{t+1}\big(\clipstorage{\smash{\pol^\ths_t(x) + \dist{t+1}}} \big),   &&t \in \TimeIndex',
    \end{align*}
    where $\riismp{t}(y) \coloneqq -\pem (y + \dist{t+1})^-$ is the realized second part of the rewards of the system at time $t$ and in state $x\in \statespace$. Similarly, $\hsmp{t}\colon \R \to \R$, $t \in \TimeIndex$, are given by
    \begin{align}
        \hsmp{T}(y) &\coloneqq 0,\nonumber \\
        \hsmp{t}(y) &\coloneqq \riismp{t}(y) + \Vsmp{t+1}(\clipstorage{y+\dist{t+1}}),  &&t \in \TimeIndex'. \label{eq: half recursion step realized h}
    \end{align}
    Then the realized objective function $\lsmp\colon \thsspace \to \R$ is given by 
    \[\lsample^{\episode}(\ths) \coloneqq \Vsample^{\ths, \episode}_0(\initstate).\]
\end{definition}

\begin{remark}
    When taking the expectation value with respect to $\Episode$ of the sample path functions, we get the parameterized functions defined at the start of the section, i.e.\ 
    \[ \Vpara{t}(x) = \EE_\Episode[\Vrv{t}(x)], \quad \hpara{t}(y) = \EE_\Episode[\hrv{t} (y)] \quad \text{and} \quad \lo(\ths) = \EE_\Episode[\lrv(\ths)] \]
for every $x \in \statespace$, $y \in \R$, $\ths \in \thsspace$ and $t \in \TimeIndex$.
\end{remark}

The derivatives of $\hsmp{t}$ and $\lsmp$ do not exist for every sample path  $\episode \in \episodespace$ and every point in their respective domains due to some kink points in the functions. However, under the given assumptions there are sufficiently few kink points in the following sense:

\begin{lemma}\label{lem:pathhderivative} 
	Suppose that  \cref{ass:PG} is satisfied. 
We obtain the following pathwise derivative recursion and stochastic policy-gradient estimator:
\begin{itemize}
    \item[a)] For  $\episode \in \episodespace$ let $\asmp{t}\colon \R \to \R$, $t \in \TimeIndex$, be defined by backward recursion as $\asmp{T}(y) = 0$,
    \begin{align}
        \asmp{T-1}(y)
        &\coloneqq\pem\1_{\{y+z_{T}<0\}}+\pout\1_{\{0<y+z_{T}<\PufferStoremax\}},
        \nonumber\\
        \asmp{t}(y) &\coloneqq
        \begin{aligned}[t]
            &\pem\1_{\{y+z_{t+1}<0\}}+ 
            \pin\1_{\{0<y+z_{t+1}<\uth_{t+1}\}}\\
            &+ \asmp{t+1}(y+z_{t+1})
            \1_{\{\uth_{t+1}<y+z_{t+1}<\oth_{t+1}\}}\\
            &+\pout\1_{\{\oth_{t+1}<y+z_{t+1}<\Pmax\}}  &\text{for} \ t\le T-2 .
        \end{aligned} 
        \label{eq:pathwise-a}
    \end{align}
    For all $t \in \TimeIndex$, $y \in \R$ and $\Pmass^\Episode$-almost all $\episode \in \episodespace$ the derivative $(\hsample^{\ths,\episode}_{t})'(y)$ exists and  
    $$(\hsample^{\ths,\episode}_{t})'(y) = \asmp{t}(y).$$
    \item[b)]  Define the function $G\colon \thsspace \times \episodespace \to \R^{2T}$ with 
       $$G(\vartheta,e)\coloneqq \big(G_{0,-}(\vartheta,e),G_{0,+}(\vartheta,e),\ldots , G_{T-1,-}(\vartheta,e),G_{T-1,+}(\vartheta,e)\big),$$
    where, for $t \in \TimeIndex'$,
    \begin{align}
         G_{t,-}(\vartheta,e)
         &\coloneqq\1_{\{\xsmp{t}\le\uth_t,\ \xsmp{t}\ne\PufferStoremax\}}
               \bigl(\asmp{t}(\uth_t)-\pin\bigr),
         \label{eq:C4}\\
         G_{t,+}(\vartheta,e)
         &\coloneqq\1_{\{\xsmp{t}\ge\oth_t,\ \xsmp{t}\ne0\}}
               \bigl(\asmp{t}(\oth_t)-\pout\bigr),
         \label{eq:C5}
    \end{align}
    where \((\xsmp{t})_{t=0}^T\) is the sample state path generated by \(e\), that is,
    \[\xsmp{0}\coloneqq x_0,\qquad \xsmp{t+1} \coloneqq
     \left[
        [\xsmp{t}]_{\uth_{t}}^{\oth_{t}}+z_{t+1}
     \right]_0^{\PufferStoremax},\qquad t\in \TimeIndex'.
    \]
    For all \(\vartheta\in\Theta\) and $\Pmass^\Episode$-almost all $e \in \episodespace$ the gradient $\nabla \lsmp(\ths)$ exists and
    $$ G(\ths,\episode) = \nabla \lsmp(\ths). $$

\end{itemize}
\end{lemma}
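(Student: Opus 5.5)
Both parts will be proved by backward induction in $t$, using the explicit piecewise-linear structure of the realized functions. For fixed $\ths$ and $\episode$, each $\Vsmp{t}$ and $\hsmp{t}$ is a composition of clippings $[\cdot]_a^b$, shifts by $\dist{t+1}$, and piecewise linear reward terms. They are therefore continuous and piecewise linear with finitely many kinks, and the task is to show that the kinks avoid the relevant evaluation points for $\Pmass^\Episode$-a.e.\ $\episode$.

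For part a), start with $\hsmp{T-1}(y) = -\pem(y+\dist{T})^- + \pout\clipstorage{y+\dist{T}}$. Its kinks are at $y+\dist{T}\in\{0,\PufferStoremax\}$, and away from them its derivative is exactly $\asmp{T-1}(y)$. For fixed $y$, the event $y+Z_T\in\{0,\PufferStoremax\}$ is null because $Z_T$ has a density (\cref{ass:PG} (III)).

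For general $t$, write $\Vsmp{t+1}(x)$ piecewise: $\hsmp{t+1}(\ut_{t+1}) + \pout(x-\ut_{t+1})$ for $x>\ut_{t+1}$, $\hsmp{t+1}(x)$ between the thresholds, and $\hsmp{t+1}(\lt_{t+1}) + \pin(x-\lt_{t+1})$ below $\lt_{t+1}$. Thus $(\Vsmp{t+1})'(x)$ equals $\pin$, $(\hsmp{t+1})'(x)$ or $\pout$ on the open pieces. Substituting $x=\clipstorage{y+\dist{t+1}}$ and differentiating $\hsmp{t}$ via the chain rule gives the recursion \eqref{eq:pathwise-a}, provided $y+\dist{t+1}$ does not lie in the critical set $\criticalset{t+1}$ and $(\hsmp{t+1})'$ exists at $y+\dist{t+1}$.

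The exceptional set for fixed $y$ consists of two parts. The first is $\{y+Z_{t+1}\in\criticalset{t+1}\}$, which is null since $Z_{t+1}$ has a density. The second is the set where $\hsample^{\ths,\Episode}_{t+1}$ fails to be differentiable at $y+Z_{t+1}$. Independence lets us condition on $Z_{t+1}=z$. By the induction hypothesis, for each fixed $y'=y+z$ the set of $(Z_{t+2},\dots)$ where differentiability fails is null. Fubini then makes the joint exceptional set null. The point to be careful about is that the induction hypothesis must be stated for every fixed $y$ and must involve only $Z_{t+2},\dots,Z_T$, which is exactly what the recursion gives.

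For part b), view $\lsmp(\ths)=\Vsmp{0}(x_0)$ as a function of $\ths$ and differentiate in $\lt_t,\ut_t$ by a forward/backward decomposition. Thresholds at time $t$ affect only the post-state $[\xsmp{t}]^{\ut_t}_{\lt_t}$ and the reward $r^{(1)}_t$. The earlier path $\xsmp{0},\dots,\xsmp{t}$ does not depend on $(\lt_t,\ut_t)$, although it depends on earlier thresholds. Hence
\[ \lsmp(\ths) = C + \begin{cases} \pin \xsmp{t} + \hsample^{\ths,\episode}_{t}(\lt_t)-\pin\lt_t, & \xsmp{t}<\lt_t,\\ \hsample^{\ths,\episode}_t(\xsmp{t}), & \lt_t\le\xsmp{t}\le\ut_t,\\ \pout\xsmp{t} + \hsample^{\ths,\episode}_t(\ut_t)-\pout\ut_t, & \xsmp{t}>\ut_t,\end{cases} \]
where $C$ collects the rewards before $t$ and does not depend on $(\lt_t,\ut_t)$. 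Note that $\hsample_t^{\ths,\episode}$ depends only on the later thresholds. Differentiating in $\lt_t$ yields $\1_{\{\xsmp{t}<\lt_t\}}(\asmp{t}(\lt_t)-\pin)$, and similarly for $\ut_t$, with part a) supplying the existence of $(\hsmp{t})'$ at the points $\lt_t,\ut_t$ (fixed, so a.e.\ $\episode$ suffices).

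The remaining issue is the boundary event $\xsmp{t}=\lt_t$ or $\xsmp{t}=\ut_t$, where a one-sided kink could appear. If $\xsmp{t}\in(0,\PufferStoremax)$, then either $t=0$ and $X_0$ has a density there, or $\xsmp{t}=\clipstorage{y+z_t}$ with $z_t$ independent of the earlier path and having a density. In both cases $\{\xsmp{t}=c\}$ is null for fixed $c$. The atoms $\xsmp{t}\in\{0,\PufferStoremax\}$ have positive probability, which is why the indicator sets carry the extra conditions $\xsmp{t}\ne\PufferStoremax$ and $\xsmp{t}\ne0$.

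At these atoms, for instance $\xsmp{t}=0=\lt_t$, differentiation must be relative to the domain $\thsspace$, where $\lt_t\ge0$. On that side the derivative exists and matches the indicator convention $\xsmp{t}\le\lt_t$. Likewise, the case $\xsmp{t}=\PufferStoremax=\lt_t=\ut_t$ is where $\xsmp{t}\ne\PufferStoremax$ removes the term.

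The remaining steps are to show that the partial derivatives are jointly continuous on a neighborhood of $\ths$ a.s., hence that $\lsmp$ is differentiable (a finite union of null sets over the $2T$ coordinates suffices), and then to match the indicators with \eqref{eq:C4}--\eqref{eq:C5}. I expect this boundary and atom bookkeeping, not the recursion itself, to be the main obstacle.
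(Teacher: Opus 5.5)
Your argument is correct. Part b) is essentially the paper's proof. The paper also writes $\widehat\ell^{e}(\vartheta)$ as a prefix term plus $r^{(1)}_t(\hat x_t,[\hat x_t]_{\underline\vartheta_t}^{\overline\vartheta_t})+\widehat h^{\vartheta,e}_t([\hat x_t]_{\underline\vartheta_t}^{\overline\vartheta_t})$, where $\hat x_t$ depends only on $\vartheta_{<t}$ and $\widehat h^{\vartheta,e}_t$ only on $\vartheta_{>t}$. It removes interior ties through the set $\widetilde B(\vartheta)$, built from $\{\underline\vartheta_t,\overline\vartheta_t\}\setminus\{0,P_{\max}\}$, and handles the atoms at $0$ and $P_{\max}$ by differentiating relative to $\Theta$, exactly as you do.

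The real difference is the null-set bookkeeping in part a). The paper fixes, for each $y$, one explicit full-probability set $B^\vartheta_t(y)$ on which every shifted sum $\underline\vartheta_a+\sum z_{k+1}$, $y+\sum z_{k+1}$, $\overline\vartheta_a+\sum z_{k+1}$ avoids the kink sets $\{0,\underline\vartheta_b,\overline\vartheta_b,P_{\max}\}$. It then runs a purely deterministic induction, using that $e\in B^\vartheta_t(y)$ implies $e\in B^\vartheta_{t+1}(\underline\vartheta_{t+1})\cap B^\vartheta_{t+1}(y+z_{t+1})\cap B^\vartheta_{t+1}(\overline\vartheta_{t+1})$. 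You instead keep the exceptional set implicit and pass from $t+1$ to $t$ by conditioning on $Z_{t+1}$ and applying Fubini.

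Your route spares you from enumerating all trajectory shapes, which is where slips creep in: the paper's range $a<b$ should read $a\le b$ to cover the first step after a threshold is hit. In exchange you owe a routine joint-measurability check of the exceptional set in $(z_{t+1},\dots,z_T)$. The explicit set gives that for free, and it adapts more directly to random evaluation points such as $\hat x_t$.

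On the step you left open, you are actually asking for more than the paper proves: it only computes the block partial derivatives and never argues joint differentiability. Going through continuity of partials is awkward at boundary points of $\Theta$ such as $\underline\vartheta_t=\overline\vartheta_t$, where coordinate paths leave $\Theta$. It is simpler to use the full-measure set on which no clipping, along the realized path or along the paths restarted at the thresholds, is evaluated at a kink, apart from the harmless atoms at $0$ and $P_{\max}$. On that set every clipping stays in one regime for all $\vartheta'\in\Theta$ near $\vartheta$. Hence $\widehat\ell^{e}$ is affine on a relative neighborhood of $\vartheta$, and its slope is $G(\vartheta,e)$.
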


The null sets $\episodespace_0 \subseteq \episodespace$, where $(\hsmp{t})'(y)$ does not exist or does not coincide with $\asmp{t}(y)$ will depend on the argument $y$, and the same holds for $\nabla \lsmp(\ths)$. 
\begin{remark} \label{rmk: switching expectation value and derivatie}
    The following relationship holds for all $y \in \R$, $\ths \in \thsspace$ and $t \in \TimeIndex'$:
    \begin{align}
        \nabla\ell(\ths) = \EE_\Episode[G(\ths,\Episode)] \quad \text{and} \quad \apara{t}(y) = \EE_\Episode[\asample^{\ths,\Episode}_{t}(y)].  \label{eq: relation parameterized and realized functions}
    \end{align}
\end{remark}

For a gradient ascent step we need an estimator of the gradient of $\lo$. sample path derivatives motivate the following mean stochastic-gradient estimator:

\begin{definition}\label{def:samplegradient}
    Let $\ths \in \thsspace$, $N \in \N$ and $\Episode^{(1)}, \dots, \Episode^{(N)}$ be i.i.d.\  copies of $\Episode$. Then the sample gradient is defined by
    \[  \gradientsmp \lo_N(\ths) \coloneqq \frac{1}{N} \sum_{n=1}^N G(\ths,{\Episode^{(n)}}). \]
    
\end{definition}

Finally we need to ensure that the parameter still resides in the parameter space $\thsspace$. Therefore we apply a Euclidean projection $\proj_{\thsspace}$ onto $\thsspace$. In total, this leads to the projected stochastic policy-gradient (PG) algorithm:
\begin{algorithm}[H]
	\caption{projected stochastic PG}
	\label{alg:Simultaneous Policy Gradient}
	\begin{algorithmic}
        \Require Initial parameter $\ths^{(0)} \in \thsspace$, batch size $N$, iterations $K$
        \State Set the step size at $\eta = \frac{1}{L_{\nabla \lo}}$
        \For{$k \in \{0,\dots,K-1\}$}
            \State Draw a fresh batch of $N$ realizations $\episode^{(k,1)}, \dots, \episode^{(k,N)}$ of $\Episode = (X_0, Z_1,\dots,Z_T)$
            \State Calculate the sample gradient: $\gradientsmp \lo_N(\ths^{(k)}) = \frac{1}{N} \sum_{n=1}^N G(\ths^{(k)},{\episode^{(k,n)}})$
			\State Update policy parameter: $\ths^{(k+1)} = \proj_{\thsspace} \Big( \ths^{(k)} + \eta \gradientsmp \lo_N(\ths^{(k)}) \Big)$
		\EndFor \\
		\Return $\ths^{(K)}$
	\end{algorithmic}
\end{algorithm}

In the remaining part of this section we will use the framework provided in \cite{chen2026LandscapePolicyOptimization} and derive a convergence result. In \cite{chen2026LandscapePolicyOptimization} the authors showed that the Polyak-\L ojasiewicz-Kurdyka  (\PLK) condition together with $L$-smoothness of the objective function is sufficient for convergence of the projected stochastic PG algorithm. We define the two properties here in the maximization version (see \cite[Definition 1 and Assumption 1]{chen2026LandscapePolicyOptimization}):

\begin{definition}
    Let $d \in \N$, $D \subseteq \R^d$ convex and $g \colon D \to \R$ differentiable with $g^* \coloneqq \sup_{x \in D} g(x) < \infty $.
    \begin{enumerate}
        \item $g$ fulfills the \PLK condition if there exists a $\mu > 0$ such that 
        \[ g^* - g(x) \le \frac{1}{2\mu} \min_{n \in N_D(x)} \norm{\nabla g(x) - n}^2, \qquad x \in D,\]
        where $N_D(x)\coloneqq\{n\in\R^d:\ \langle n, y-x \rangle\le 0\text{ for all }y\in D\}$ is the normal cone of $D$ at $x$. $\mu$ is called the \PLK constant. 
        \item $g$ is called $L$-smooth if there is an $L> 0$ such that $\nabla g$ is $L$-Lipschitz continuous, i.e.\ 
        \[ \norm{\nabla g(x) - \nabla g(y)} \le L\norm{x-y}, \qquad x,y \in D. \]
    \end{enumerate}
\end{definition}

Moreover, in \cite[Theorem 1]{chen2026LandscapePolicyOptimization} they provided a way of checking the \PLK condition for the objective function based on properties of the underlying Markov decision process. A version of this theorem that is adapted to the notation of this work can be found in the appendix (see \cref{thm: Theorem cited from Landscape Paper}). In Section 7 of \cite{chen2026LandscapePolicyOptimization} the authors showed that the \PLK condition holds for the related cash balancing problem.  In addition to the \PLK condition, the objective function $\lo$ needs to be $L$-smooth and the gradient estimator $\gradientsmp \lo_N$ needs to be unbiased and bounded in variance.


Let \(\mathcal F_k\) be the sigma-field generated by the initial parameter and all batches
used before iteration \(k\).  If the batch
\(E^{(k,1)},\dots,E^{(k,N)}\) used at iteration \(k\) is independent of
\(\mathcal F_k\), then \(\vartheta^{(k)}\) is \(\mathcal F_k\)-measurable and we obtain:

\begin{lemma}[Unbiased and bounded variance]
\label{lem:unbiased}
	Suppose that  \cref{ass:PG} is satisfied. For every \(\vartheta^{(k)}\in\ThetaSet\),
\[
\E[\widehat\nabla_N\ell(\vartheta^{(k)}) | \mathcal{F}_k]
=
\nabla\ell(\vartheta^{(k)}).
\]
Moreover, with 
\begin{equation}
    \sigma^2\coloneqq2\psum^2T,  \label{eq:gradient-variance}
\end{equation}
it holds 
\begin{equation}
\E\left[ \norm{ \widehat\nabla_N\ell(\vartheta^{(k)}) - \nabla\ell(\vartheta^{(k)})}_2^2| \mathcal{F}_k \right] \le \frac{\sigma^2}{N}.
\end{equation}
\end{lemma}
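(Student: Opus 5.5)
Unbiasedness follows from the conditional independence of the batch and \cref{rmk: switching expectation value and derivatie}. The variance bound then comes from the i.i.d.\ structure of the batch together with a uniform almost sure bound on each component of $G$.

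\textbf{Unbiasedness.} Since $\vartheta^{(k)}$ is $\mathcal F_k$-measurable and the batch $E^{(k,1)},\dots,E^{(k,N)}$ is independent of $\mathcal F_k$, the freezing lemma for conditional expectations gives
\[
\E[\widehat\nabla_N\ell(\vartheta^{(k)})\,|\,\mathcal F_k]=\Big(\tfrac1N\sum_{n=1}^N \E_E[G(\ths,E)]\Big)\Big|_{\ths=\vartheta^{(k)}}.
\]
To apply it we need joint measurability of $G$ in $(\ths,e)$. This holds because $G$ is built from indicators and finitely many compositions of clipping maps, which are Borel. Since $\E_E[G(\ths,E)]=\nabla\ell(\ths)$ for every $\ths\in\Theta$ by \cref{rmk: switching expectation value and derivatie}, the first claim follows.

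\textbf{Variance bound.} Conditionally on $\mathcal F_k$, the summands $G(\vartheta^{(k)},E^{(k,n)})-\nabla\ell(\vartheta^{(k)})$ are i.i.d.\ and centered. Their cross terms therefore vanish, and
\[
\E\big[\|\widehat\nabla_N\ell-\nabla\ell\|_2^2\,\big|\,\mathcal F_k\big]=\tfrac1N\,\E_E\big[\|G(\ths,E)-\nabla\ell(\ths)\|_2^2\big]\Big|_{\ths=\vartheta^{(k)}}\le \tfrac1N\sup_{\ths}\E_E\|G(\ths,E)\|_2^2,
\]
using that the variance is bounded by the second moment. It therefore suffices to show $\|G(\ths,e)\|_2^2\le 2T\psum^2$ for all $\ths$ and $e$. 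Since $G$ has $2T$ components, this reduces to the bound $|G_{t,\pm}(\ths,e)|\le\psum$.

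\textbf{Bounding the components.} I first show by backward induction on $t$ that $0\le \asmp{t}(y)\le \pem$ for all $y$. For $t=T$ and $t=T-1$ this is immediate, because the indicators in the formula for $\asmp{T-1}$ are disjoint and $\pout\le\pem$. For $t\le T-2$, the four indicator events in \eqref{eq:pathwise-a} are pairwise disjoint. The value of $\asmp{t}(y)$ is therefore a single term, either one of $\pem,\pin,\pout$ or $\asmp{t+1}(\cdot)$, or it is zero. All of these lie in $[0,\pem]$ by the induction hypothesis and $\pout\le\pin<\pem$. It follows that $|\asmp{t}(\uth_t)-\pin|\le\pem+\pin\le\psum$ and $|\asmp{t}(\oth_t)-\pout|\le \pem+\pout\le\psum$. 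The indicator prefactors in \eqref{eq:C4}--\eqref{eq:C5} are at most $1$, so $|G_{t,\pm}|\le\psum$.

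Summing the squares of the $2T$ components gives the bound $2T\psum^2=\sigma^2$, which yields the claim.

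The only delicate point is the measurability and independence argument behind the conditional expectation. It is standard once $G$ is seen to be jointly Borel. The rest is a direct bound with no real obstacle.
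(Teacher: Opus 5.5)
Your proof is correct and follows the paper's own argument. Unbiasedness comes from $\E_E[G(\vartheta,E)]=\nabla\ell(\vartheta)$, and the variance bound comes from the coordinatewise bound $|G_{t,\pm}|\le\psum$ (via $0\le\asmp{t}\le\pem$), summed over the $2T$ coordinates and divided by $N$; you are only more explicit than the paper about the conditioning step (freezing lemma, joint measurability of $G$) and you prove \eqref{eq:pathwise-bound} by induction instead of quoting it.
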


Next we can show the L-smoothness of the objective function $\ell$.

\begin{lemma}[$L_{\nabla \lo}$-smoothness]
\label{lem:smooth}
Under \cref{ass:PG}, the function \(\ell\) is $L_{\nabla \lo}$-smooth on \(\ThetaSet\) with
\begin{equation}
L_{\nabla\ell} \coloneqq  4\psum\Lambda T^2.
\label{eq:L-gradient}
\end{equation}
Thus
\[
\norm{
\nabla\ell(\vartheta)
-
\nabla\ell(\vartheta')
}_2
\le
L_{\nabla\ell}
\norm{\vartheta-\vartheta'}_2
\]
for all \(\vartheta,\vartheta'\in\ThetaSet\).
\end{lemma}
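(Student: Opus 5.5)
We prove $L_{\nabla\lo}$-smoothness by bounding, coordinate by coordinate, the Lipschitz constant of each partial derivative in \eqref{eq:grad-lower}–\eqref{eq:grad-upper} with respect to the full parameter vector. Each partial derivative is a product of two factors. The first is an occupation probability $p_t^-(\ths)\coloneqq\PP^\ths_\rho(X_t\le\lt_t,X_t\ne\PufferStoremax)$, resp.\ $p_t^+(\ths)\coloneqq\PP^\ths_\rho(X_t\ge\ut_t,X_t\neq 0)$, which lies in $[0,1]$. The second is a marginal term $\apara{t}(\lt_t)-\pin$, resp.\ $\apara{t}(\ut_t)-\pout$, which is bounded by $\psum$, since by \eqref{eq: aprime} and induction $0\le \apara{t}\le \pem\le\psum$. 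From $\abs{fg-f'g'}\le \abs{f}\abs{g-g'}+\abs{g'}\abs{f-f'}$ it therefore suffices to show two things: $\abs{\apara{t}(\lt_t)-a^{\ths'}_t(\lt'_t)}\le c_1 \Lambda T\norm{\ths-\ths'}$ with $c_1\psum$ absorbed, and $\abs{p^\pm_t(\ths)-p^\pm_t(\ths')}\le c_2\Lambda T\norm{\ths-\ths'}_1$. Summing $2T$ squared coordinates then gives a bound of order $\psum\Lambda T^{3/2}\cdot T^{1/2}$, which is the target $4\psum\Lambda T^2$ once the constants are tracked. Throughout, we pass between $\norm{\cdot}_1$ and $\norm{\cdot}_2$ via $\norm{\cdot}_1\le\sqrt{2T}\norm{\cdot}_2$.

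\emph{Step 1 (marginal terms).} We prove by backward induction on $t$ that $y\mapsto\apara{t}(y)$ is Lipschitz with constant at most $\psum L_Z$ and that $\ths\mapsto\apara{t}(y)$ is Lipschitz in the future thresholds $(\lt_s,\ut_s)_{s>t}$ with constant about $\psum L_Z$ per coordinate. Using \eqref{eq: aprime}, $\apara{t}(y)$ is a sum of terms of the form $c\,\PP(y+Z_{t+1}\in I)$, where $I$ has endpoints among $0,\lt_{t+1},\ut_{t+1},\PufferStoremax$, plus $\EE[\apara{t+1}(y+Z_{t+1})\1_{\{\lt_{t+1}<y+Z_{t+1}<\ut_{t+1}\}}]$. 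Writing each term as an integral against $f_{t+1}$, moving an endpoint by $\delta$ changes the term by at most $\psum L_Z\delta$. For the integral term we substitute $u=y+z$, so that the dependence on $y$ enters only through $f_{t+1}(u-y)$. Then the $y$-Lipschitz constant does not accumulate over $t$. A cleaner way to see this is the telescoping identity $\apara{t}(y)=\int \phi(u)f_{t+1}(u-y)\,du$ with $\abs{\phi}\le\psum$ supported on $[0,\PufferStoremax]$ plus the term $\pem F_{t+1}(-y)$, which gives a $y$-derivative bounded by $2\psum L_Z$. The dependence on a future threshold $\ths_s$ propagates through the nested integrals, each a probability kernel, so it adds at most $\psum L_Z\abs{\ths_s-\ths_s'}$. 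The combined evaluation point $(\lt_t,\ths)$ then varies Lipschitz with constant $O(\psum\Lambda)$ per coordinate.

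\emph{Step 2 (occupation probabilities).} The law of $X_t$ under $\pi^\ths$ has atoms at $0$ and $\PufferStoremax$ and a density on $(0,\PufferStoremax)$ bounded by $\Lambda$. For $t=0$ this holds by \cref{ass:PG}(II); for $t\ge1$ the absolutely continuous part is a convolution with $f_t\le L_Z$ followed by clipping. We couple the two chains driven by the same $E$. Since clipping is $1$-Lipschitz in the thresholds, $\abs{X_t^\ths-X_t^{\ths'}}\le\sum_{s<t}(\abs{\lt_s-\lt_s'}+\abs{\ut_s-\ut_s'})$ pathwise. We then bound the difference of the indicators by the probability that $X^\ths_t$ lies within distance $\abs{X_t^\ths-X_t^{\ths'}}+\abs{\lt_t-\lt_t'}$ of the level $\lt_t$. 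The density bound controls this by $2\Lambda$ times that distance, which gives Lipschitz constant $O(\Lambda)$ per coordinate and $O(\Lambda T)$ overall in $\norm{\cdot}_1$.

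\emph{Main obstacle.} The delicate part is the boundary and atom handling in Step 2. The events $\{X_t\ne\PufferStoremax\}$ and $\{X_t\ne 0\}$ remove the atoms, which would otherwise produce jumps when a threshold hits $0$ or $\PufferStoremax$. We also need to check that the coupled paths do not straddle an atom on a set of positive probability. A change in the atom indicator requires $X^\ths_t$ to pass through a neighbourhood of a boundary point where the chain had its continuous part, and that set again has probability at most $\Lambda$ times the distance. We would first write $X_t^\ths$ explicitly as $[y_{t-1}+Z_t]_0^{\PufferStoremax}$ and reduce the atom events to events about the unclipped variable $y_{t-1}+Z_t$, whose density is bounded by $L_Z$. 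This also removes the need to track the density of $X_t$ directly for $t\ge1$. The constant bookkeeping leading to exactly $4\psum\Lambda T^2$ is then routine.
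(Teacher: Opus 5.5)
Your product-rule decomposition is organised differently from the paper's and can be made to work, but the first half of Step~1 is wrong as argued. Write $a_t^\vartheta(y)=\pem F_{t+1}(-y)+\int\phi(u)f_{t+1}(u-y)\,du$ with $|\phi|\le\psum$ and $f_{t+1}\le L_Z$. These facts do not give a Lipschitz bound in $y$. They only control $\int_0^{\Pmax}|f_{t+1}(u-y_1)-f_{t+1}(u-y_2)|\,du$, and for a merely bounded density this need not be $O(|y_1-y_2|)$.

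To differentiate, you have to move the derivative onto $\phi$. The bound is then $L_Z$ times the total variation of the extended step function $\pem\1_{(-\infty,0)}+\phi$. That variation contains the variation of $a^\vartheta_{t+1}$ on $(\underline\vartheta_{t+1},\overline\vartheta_{t+1})$, plus the jumps $|\pin-a^\vartheta_{t+1}(\underline\vartheta_{t+1})|$ and $|a^\vartheta_{t+1}(\overline\vartheta_{t+1})-\pout|$. When the continuation is optimal, this extended function is monotone with variation $\pem$. That is why \cref{rem: properties of the value function under Lipschitz assumption} gets the non-accumulating bound $\Lip(h_t')\le\pem L_{t+1}$ from concavity of $V_{t+1}$. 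For a general $\vartheta\in\Theta$ the function is not monotone.

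In fact, the claimed constant already fails for $T=2$ with $\pem>2\pin+\pout$. An interior $\underline\vartheta_1$, $\overline\vartheta_1=\Pmax$ and sharply peaked densities give $\Lip(a_0^\vartheta)$ close to $(2\pem-\pin)L_Z>\psum L_Z$. The correct replacement is the recursion the paper uses. On the event that both $y_i+Z_{t+1}$ lie in the inaction interval, use $\Lip(a^\vartheta_{t+1})$. On the symmetric difference, use $0\le a^\vartheta_{t+1}\le\pem$ and the density bound. This gives $M_t\coloneqq\sup_\vartheta\Lip(a_t^\vartheta)\le M_{t+1}+3\psum L_Z$, hence $M_t\le 3\psum L_Z(T-t)$.

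With this $M_t$, the constant bookkeeping is no longer ``routine'' in the form you sketch. Bounding every coordinate by a multiple of $\Lambda T\|\vartheta-\vartheta'\|_1$ and summing squares gives roughly $(6T^2+4T)\psum\Lambda$, which overshoots the target. Keep the own-coordinate terms and the cross-block terms apart instead. Your product rule, your Step~2, and the (correct) future-threshold half of Step~1 give
\[
|\partial_{\underline\vartheta_t}\ell(\vartheta)-\partial_{\underline\vartheta_t}\ell(\vartheta')|
\le (M_t+\psum\Lambda)\,|\underline\vartheta_t-\underline\vartheta_t'|
+\psum L_Z\sum_{k\ne t}\|\vartheta_k-\vartheta_k'\|_1,
\]
and likewise for $\overline\vartheta_t$. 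Taking the $\ell_2$-norm over the $2T$ coordinates yields $\psum\Lambda(5T+1)\|\vartheta-\vartheta'\|_2\le 4\psum\Lambda T^2\|\vartheta-\vartheta'\|_2$ for $T\ge 2$. For $T=1$ the cross term is absent and you get $4\psum\Lambda$.

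The paper instead perturbs one block $\vartheta_k$ at a time, treating $t<k$, $t=k$ and $t>k$ separately, and then telescopes over blocks with Cauchy--Schwarz. Your simultaneous coordinatewise split uses the same ingredients. Your Step~2, including the reduction of the atom events to the unclipped variable, is the paper's conditioning on the post-state $Y_{t-1}$. Your future-threshold estimate is \eqref{eq:C10}. What your route adds is that it avoids the block-summation loss, so once Step~1 is repaired it gives a constant linear rather than quadratic in $T$.
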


Last but not least we show that a \PLK inequality is satisfied in our model. 
\begin{lemma}[\PLK inequality]
\label{lem:KL}
Under \cref{ass:PG}, the objective \(\ell\) satisfies a \PLK condition, i.e. 
\begin{equation}
\ell^*-\ell(\ths) \le \frac{1}{2\mu_\ell} \min_{n \in N_\thsspace(\ths)} \norm{ \nabla \lo(\ths) - n}^2, \qquad \vartheta\in\ThetaSet, \label{eq:KL-global}
\end{equation}
and one may take
\begin{equation}
\mu_\ell \coloneqq \frac{\kappa^3\alpha^8}{16eL_Z^2\psum^2T^2}.
\label{eq:mu-ell}
\end{equation}
\end{lemma}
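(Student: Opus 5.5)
We will verify the hypotheses of the adapted version of \cite[Theorem 1]{chen2026LandscapePolicyOptimization} stated as \cref{thm: Theorem cited from Landscape Paper}. That result reduces the \PLK inequality for $\lo$ to two ingredients. The first is a \emph{pointwise} (per-time, per-state) gradient-domination property of the one-stage objective $y\mapsto Q^\ths_t(x,y)$ restricted to the threshold class. The second is a \emph{distribution mismatch} bound comparing the state distributions $\PP^{\ths}_\rho(X_t\in\cdot)$ and $\PP^{\ths^*}_\rho(X_t\in\cdot)$. The performance-difference lemma writes $\lo^*-\lo(\ths)$ as a sum over $t$ of expected advantages of $\pol^{*}$ relative to $\pol^\ths$ under the optimal state distribution. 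The plan is to bound each term by the squared projected partial derivatives \eqref{eq:grad-lower}--\eqref{eq:grad-upper}.

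\textbf{Step 1: strong concavity of $\hpara{t}$.} Using \cref{lem:derivative in expectation}\,a), we show that $\apara{t}=(\hpara{t})'$ is decreasing with slope at most $-\kappa$ on $[0,\PufferStoremax]$. Its first term $\pem\PP(y+Z_{t+1}<0)=\pem F_{t+1}(-y)$ has derivative $-\pem f_{t+1}(-y)\le-\pem\mu_Z$ for $y\in[0,\PufferStoremax]$. The remaining terms are nonincreasing in $y$, which follows inductively because $\pin\ge\apara{t+1}\ge\pout$ on the middle band and $\pin\ge\pout$. Hence $\hpara{t}-\pin y$ and $\hpara{t}-\pout y$ are $\kappa$-strongly concave. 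This gives, for the one-step advantage at time $t$ in state $x$,
\[
Q_t^\ths(x,\pol^*_t(x))-Q_t^\ths(x,\pol^\ths_t(x))\le \tfrac{1}{2\kappa}\big(\apara{t}(\lt_t)-\pin\big)^2\ \text{or}\ \tfrac{1}{2\kappa}\big(\apara{t}(\ut_t)-\pout\big)^2 ,
\]
depending on whether $x$ lies below $\lt_t$ or above $\ut_t$. We treat the projection/normal-cone corner cases, $\lt_t=0$, $\ut_t=\PufferStoremax$ and $\lt_t=\ut_t$, by noting that the sign constraints of the normal cone match the sign of the one-sided optimality conditions. A first-order comparison with the optimal thresholds via \cref{thm: optimal policies are threshold policies (abs cont)} is needed here, to ensure the local maximizer of the relevant concave piece is the correct target.

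\textbf{Step 2: weights.} The gradient components carry the weights $\PP^\ths_\rho(X_t\le\lt_t,\,X_t\ne\PufferStoremax)$ and $\PP^\ths_\rho(X_t\ge\ut_t,\,X_t\ne0)$. We lower bound these by $\alpha$ uniformly. For $t=0$ this uses the atoms $\PP(X_0=0)\ge\alpha_0$ and $\PP(X_0=\PufferStoremax)\ge\alpha_0$. For $t\ge1$ it uses $\PP(Z_t\le-\PufferStoremax)\ge\alpha_Z$ and $\PP(Z_t\ge\PufferStoremax)\ge\alpha_Z$, which force $X_t=0$, resp.\ $X_t=\PufferStoremax$, regardless of the past. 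Conversely, the advantage terms are integrated against the optimal occupancy measure, whose mass is at most $1$. This yields a mismatch ratio bounded by $\alpha^{-1}$, and after squaring in the Cauchy--Schwarz step, by a power of $\alpha^{-1}$.

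\textbf{Step 3: handling the future.} The advantage involves $\Vpara{t+1}$, not $V_{t+1}$. So in the performance-difference sum we must use the \cite{chen2026LandscapePolicyOptimization} telescoping, in which the advantage at time $t$ is computed with the current policy's continuation. Alternatively, we can control the gap between $\apara{t}$ and $V_t'$-based quantities through the Lipschitz constant $\pem L_Z$ of \cref{rem: properties of the value function under Lipschitz assumption}; this is where $L_Z^2$ and $\psum^2$ enter.

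\textbf{Assembly and main obstacle.} Summing over $t$ and applying Cauchy--Schwarz with $T$ terms gives $\lo^*-\lo(\ths)\le C\,T^2\,\min_{n}\norm{\nabla\lo(\ths)-n}^2$. Tracking constants then leads to the stated $\mu_\ell$. The main obstacle is Step 1 together with its interaction with Step 3. The pointwise domination must hold for the \emph{parameterized} continuation rather than the optimal one, and must hold uniformly on the boundary of the constrained set $\thsspace_t$. We will first try to prove the monotonicity $\pout\le\apara{t}\le\pem$ and the slope bound by backward induction directly from \eqref{eq: aprime}, since both the strong concavity and the control of continuation errors rely on it.
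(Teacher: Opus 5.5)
\textbf{Gap in Step 1.} You claim that $\hpara{t}-\pin y$ and $\hpara{t}-\pout y$ are $\kappa$-strongly concave for \emph{every} $\ths\in\thsspace$, arguing inductively that $\pout\le\apara{t+1}\le\pin$ on the inaction band. This is false for a non-optimal continuation, and the rest of your argument depends on it. For example, if $\ths_{>t+1}$ is optimal and $\lt_{t+1}<\lt^*_{t+1}$, then $\apara{t+1}(\lt_{t+1})=h_{t+1}'(\lt_{t+1})>\pin$. So $(\Vpara{t+1})'$ jumps \emph{upward} at $\lt_{t+1}$, and $\Vpara{t+1}$ is not concave. The jump adds the positive term $(\apara{t+1}(\lt_{t+1})-\pin)f_{t+1}(\lt_{t+1}-y)$ to $(\apara{t})'(y)$. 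This term can dominate the negative ones when $f_{t+1}(\lt_{t+1}-y)\approx L_Z$ and $f_{t+1}(-y)\approx\mu_Z$.

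The lower bound $\pout\le\apara{t}$ you plan to prove also fails, even for the optimal policy. Indeed, $\apara{T-1}(y)<\pout$ whenever $(\pem-\pout)F_T(-y)<\pout\PP(y+Z_T>\PufferStoremax)$, which is exactly the situation $\ut^*_{T-1}<\PufferStoremax$. Only $0\le\apara{t}\le\pem$ holds. Strong concavity is available only for the \emph{optimal} $\hch{t},\hds{t}$. There $V_{t+1}$ is concave by \cref{thm: optimal policies are threshold policies (abs cont)}, and \eqref{eq:A2} with $f_{t+1}\ge\mu_Z$ gives $h_t''\le-\kappa$.

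Consequently, the per-step gradient domination of $Q^\ths_t$ with the parameterized continuation is not available. Your performance-difference/distribution-mismatch scheme rests on exactly that domination. Step 3 does not bridge this. It also misdescribes the telescoping: in \eqref{eq:C23} the time-$k$ term uses the \emph{current} prefix distribution and the \emph{optimal} continuation, not the current continuation. So no occupancy-mismatch coefficient is involved.

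\textbf{The missing ingredient.} What is missing is precisely hypotheses 2) and 3) of \cref{thm: Theorem cited from Landscape Paper}. These are not a ``pointwise domination plus mismatch'' pair.

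For hypothesis 2), one proves the \PLK inequality for $\Phi^\ths_t(\eta)=\EE^{\thspa{t}}_\rho[Q_t(X_t,[X_t]^{\overline\eta}_{\underline\eta})]$, which uses the optimal continuation. Its gap is at most $\hch{t}(\lt^*_t)-\hch{t}(\underline\eta)+\hds{t}(\ut^*_t)-\hds{t}(\overline\eta)$. Strong concavity applies there, and the weights $F^{\ths,\pm}_t\ge\alpha$ give $\mu_Q=\kappa\alpha^2$. Your Steps 1--2 essentially do this, provided you switch to the optimal continuation.

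For hypothesis 3), the mismatch between $\nabla\Phi^\ths_t$ and $\nabla_{\ths_t}\lo(\ths)$ is controlled by the sequential decomposition inequality. It is built in three steps:
\begin{enumerate}
\item Replacing $\ths_k$ by $\ths^*_k$ (with $k>t$, optimal continuation after $k$) changes $\apara{t}$ in sup-norm by at most $L_Z$ times the integrals of $|\hch{k}'|$ and $|\hds{k}'|$ between the old and new thresholds, by \eqref{eq: inequality sup-norm difference in a}.
\item Since $\hch{k},\hds{k}$ are monotone on each side of their maximizers, these integrals equal the gap $\Delta_k$.
\item The atoms $\PP(X_k=0),\PP(X_k=\PufferStoremax)\ge\alpha$ give $\Phi^\ths_k(\ths^*_k)-\Phi^\ths_k(\ths_k)\ge\alpha\Delta_k$, hence $M_g=2L_Z/\alpha$.
\end{enumerate}

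The stated constant is then $\mu_\ell=\mu_Q^3/(eM_g^2G^2T^2)$ with $G=2\psum$. This is where $\alpha^8$, $L_Z^2$, $\psum^2$ and the factor $e$ come from. A Cauchy--Schwarz assembly over $T$ terms would not produce them, so your claim that ``tracking constants leads to the stated $\mu_\ell$'' is unsupported.
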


The following theorem gives a global finite-sample bound on the expected optimality gap.  Thus, after a sufficient number of steps $K$ and using enough samples $N$ we obtain an approximate optimal policy. The bounds in the theorem indicate how large $K$ and $N$ must be. However, they depend strongly on the explicit model constants and may be very conservative. With  \cref{lem:unbiased,lem:smooth,lem:KL} the theorem follows from \cite[Lemma 1]{chen2026LandscapePolicyOptimization}.

\begin{theorem}
\label{thm:PG}
Suppose \cref{ass:PG} holds. Let $\sigma^2$, $L_{\nabla \lo}$ and $\mu_\lo$ be as in \cref{eq:gradient-variance,eq:L-gradient,eq:mu-ell}. Further, let the initial optimality gap be given by 
\[ \Delta_0 \coloneqq \ell^*-\ell(\vartheta^{(0)}). \]
Then the iterates generated by the projected stochastic PG-Algorithm (see \cref{alg:Simultaneous Policy Gradient}) satisfy
\begin{align}
\E\left[ \ell^*-\ell(\vartheta^{(K)}) \right] &\le \left( 1-\frac{\mu_\ell}{16L_{\nabla \lo}+\mu_\ell} \right)^K \Delta_0 + \frac{17\sigma^2}{\mu_\ell N}.
\label{eq:expected-convergence}
\end{align}

In particular, for
$\delta>0$, $q\in(0,1)$, if $\Delta_0>0$, 
\begin{equation}
N \ge \frac{34\sigma^2}{\mu_\ell\,\delta q} = \frac{68\psum^2T}{\mu_\ell\,\delta q}, \quad \text{and} \quad K \ge \frac{16L_{\nabla\lo}+\mu_\ell}{\mu_\ell}\log \left(\frac{2\Delta_0}{\delta q}\right),
\label{eq:N-K-bound}
\end{equation}
then
\begin{equation}
\PP\left(\ell^*-\ell(\vartheta^{(K)})>\delta\right)\le q.
\label{eq:high-probability}
\end{equation}
\end{theorem}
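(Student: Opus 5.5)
The bound \cref{eq:expected-convergence} follows by plugging \cref{lem:unbiased,lem:smooth,lem:KL} into the general convergence result \cite[Lemma 1]{chen2026LandscapePolicyOptimization}. The high-probability statement then follows from Markov's inequality and a direct check of the choices in \cref{eq:N-K-bound}.

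\emph{Step 1: the one-step contraction.} First I verify that the hypotheses of \cite[Lemma 1]{chen2026LandscapePolicyOptimization} hold in our notation:
\begin{itemize}
\item the feasible set $\thsspace$ is a nonempty, compact, convex product of triangles, so $\proj_\thsspace$ is well defined and nonexpansive;
\item $\lo$ is $L_{\nabla\lo}$-smooth on $\thsspace$ by \cref{lem:smooth};
\item $\lo$ satisfies the \PLK inequality \cref{eq:KL-global} with constant $\mu_\lo$ by \cref{lem:KL};
\item conditionally on $\mathcal F_k$, the batch gradient is unbiased with variance at most $\sigma^2/N$ by \cref{lem:unbiased}, because the batch drawn at iteration $k$ is fresh and hence independent of $\mathcal F_k$.
\end{itemize}
That lemma (applied in maximization form, i.e.\ to $-\lo$, with step size $\eta=1/L_{\nabla\lo}$) gives the one-step recursion
\[
\E[\lo^*-\lo(\ths^{(k+1)})\mid\mathcal F_k]\le \Bigl(1-\tfrac{\mu_\lo}{16L_{\nabla\lo}+\mu_\lo}\Bigr)(\lo^*-\lo(\ths^{(k)}))+c\,\tfrac{\sigma^2}{N}.
\]
If this must be rederived, the route is the standard one: the descent lemma for the projected step, then the prox-gradient-mapping form of the \PLK inequality, which compares the gradient mapping with $\min_{n\in N_\thsspace}\norm{\nabla\lo-n}$. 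Nonexpansiveness of the projection absorbs the noise term into a multiple of $\sigma^2/N$.

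\emph{Step 2: unrolling.} Taking total expectations and iterating $K$ times, the additive terms sum to a geometric series. With $\rho\coloneqq 1-\mu_\lo/(16L_{\nabla\lo}+\mu_\lo)$,
\[
\sum_{j\ge0}\rho^j\cdot c\,\frac{\sigma^2}{N}=\frac{16L_{\nabla\lo}+\mu_\lo}{\mu_\lo}\,c\,\frac{\sigma^2}{N},
\]
and matching the constant to $17\sigma^2/(\mu_\lo N)$ yields \cref{eq:expected-convergence}. Fixing the exact constant $17$ means tracking the constant $c$ in the cited lemma; I would simply quote its form rather than re-derive it. This constant bookkeeping is the main obstacle, together with the sign and normal-cone conventions when translating from minimization to maximization.

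\emph{Step 3: high probability.} Since $\lo^*-\lo(\ths^{(K)})\ge0$, Markov's inequality gives
\[
\PP\bigl(\lo^*-\lo(\ths^{(K)})>\delta\bigr)\le \frac{\E[\lo^*-\lo(\ths^{(K)})]}{\delta}.
\]
It therefore suffices to make each term of \cref{eq:expected-convergence} at most $\delta q/2$.
\begin{itemize}
\item \emph{Variance term.} It is at most $\delta q/2$ iff $N\ge 34\sigma^2/(\mu_\lo\delta q)$. This equals $68\psum^2T/(\mu_\lo\delta q)$ by $\sigma^2=2\psum^2T$.
\item \emph{Bias term.} Using $\log(1-x)\le -x$, we get $\rho^K\Delta_0\le \exp\bigl(-K\mu_\lo/(16L_{\nabla\lo}+\mu_\lo)\bigr)\Delta_0$, which is at most $\delta q/2$ under the stated lower bound on $K$.
\end{itemize}
This proves \cref{eq:high-probability}.
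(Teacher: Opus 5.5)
Your proposal is correct and matches the paper's proof: the expectation bound comes from invoking \cite[Lemma 1]{chen2026LandscapePolicyOptimization} for $-\ell$, using the unbiasedness/variance, smoothness and \PLK lemmas, and the high-probability claim follows by making each term at most $\delta q/2$ (via $1-x\le e^{-x}$) and applying Markov's inequality to the nonnegative gap. Your extra sketch of the one-step recursion and the geometric unrolling is not needed, since, as in the paper, the constant $17$ is simply inherited from the cited lemma.
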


The same argument as in the previous theorem also yields a finite-sample mean-square error bound for the threshold parameters.

\begin{corollary}\label{cor:threshold_bounds}
    Suppose \cref{ass:PG} holds. Then we obtain with the notations of the previous theorem that
    \begin{align}\label{eq:threshold_difference}
        \E\| \vartheta^{(K)} -\vartheta^*\|_2^2 \le \frac{2}{\alpha \kappa  } \Big( \left(1-\frac{\mu_\ell}{16L_{\nabla \lo}+\mu_\ell} \right)^K\Delta_0+\frac{17\sigma^2}{\mu_\ell N}\Big).
    \end{align}
\end{corollary}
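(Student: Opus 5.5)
The corollary will follow from \cref{thm:PG} once we have a quadratic-growth estimate
\begin{equation*}
\ell^*-\ell(\vartheta) \ge \frac{\alpha\kappa}{2}\,\|\vartheta-\vartheta^*\|_2^2, \qquad \vartheta\in\Theta .
\end{equation*}
Setting $\vartheta=\vartheta^{(K)}$, taking expectations and inserting \cref{eq:expected-convergence} then gives \cref{eq:threshold_difference} directly. So the work is to prove this quadratic growth from the structure of $\ell$ under \cref{ass:PG}.

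To get it, I would use a performance-difference decomposition over time. Let $\vartheta^*$ be the optimal thresholds of \cref{thm: optimal policies are threshold policies (abs cont)}. Telescope along the trajectory of $\pi^\vartheta$:
\begin{equation*}
\ell^*-\ell(\vartheta)=\sum_{t\in\TimeIndex'}\EE^\vartheta_\rho\big[V_t(X_t)-Q_t(X_t,\pi^\vartheta_t(X_t))\big].
\end{equation*}
Each summand is nonnegative, so it suffices to bound a single summand from below. For fixed $t$, use the representation \cref{def:Qt by hcht and hdst}.
\begin{itemize}
\item On the event $\{X_t\le \lt_t\}$ the policy raises the state to $\lt_t$. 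Its loss against the optimal action is then at least $\hch{t}(\thch{t})-\hch{t}(\lt_t)$ when $X_t$ lies below both lower thresholds. The same holds in the more favourable case $X_t=0$.
\item On $\{X_t=\PufferStoremax\}$ the analogous loss is $\hds{t}(\thds{t})-\hds{t}(\ut_t)$.
\end{itemize}
Under \cref{ass:PG}(III) the function $\hch{t}$ is strongly concave on $[0,\PufferStoremax]$ with modulus $\kappa=\pem\mu_Z$. Indeed, differentiating the recursion in \cref{lem:derivative in expectation}, the term $\pem F_{t+1}(-y)$ contributes $-\pem f_{t+1}(-y)\le-\pem\mu_Z$ to the second derivative. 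The remaining terms are nonincreasing in $y$ by concavity of $V_{t+1}$, which \cref{thm: optimal policies are threshold policies (abs cont)} provides. Since $\thch{t}$ maximizes $\hch{t}$ over $[0,\PufferStoremax]$, first-order optimality combined with strong concavity gives
\begin{equation*}
\hch{t}(\thch{t})-\hch{t}(\lt_t)\ge \frac{\kappa}{2}(\lt_t-\thch{t})^2 ,
\end{equation*}
and the same bound holds for the upper threshold.

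Finally we need the mass of the boundary events. Under \cref{ass:PG}, $\PP^\vartheta_\rho(X_t=0)\ge\alpha$ and $\PP^\vartheta_\rho(X_t=\PufferStoremax)\ge\alpha$ for every $t$ and every $\vartheta$. For $t=0$ this is the atom assumption on $X_0$. For $t\ge1$ it follows because $Z_t\le-\PufferStoremax$ forces $X_t=0$ regardless of the post-state, and $Z_t\ge\PufferStoremax$ forces $X_t=\PufferStoremax$. Since $X_t=0$ lies below any lower threshold and $X_t=\PufferStoremax$ lies above any upper threshold, summing over $t$ gives
\begin{equation*}
\ell^*-\ell(\vartheta)\ge\frac{\alpha\kappa}{2}\sum_t\big[(\lt_t-\thch{t})^2+(\ut_t-\thds{t})^2\big],
\end{equation*}
which is the claimed quadratic growth.

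The main obstacle is the per-step loss bound. The summand involves $V_t$ and $Q_t$ of the optimal continuation rather than $\hch{t}^\vartheta$, and that is exactly why the performance-difference form is needed. One must also check that at $X_t=0$ the loss equals the $\hch{t}$-difference even when a threshold is clipped at the boundary, or the unique-maximizer issue arising when $\tthch{t}<0$. Strong concavity still gives the quadratic bound there, because the one-sided first-order condition at a boundary maximizer has the right sign.
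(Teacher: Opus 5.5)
Your architecture matches the paper's proof. The performance-difference sum is \eqref{eq:C23}, evaluating the loss on the atoms $\{X_t=0\}$ and $\{X_t=\PufferStoremax\}$ is \eqref{eq:C20}, and strong concavity with the one-sided first-order condition at the constrained maximizer gives quadratic growth and then, via \cref{thm:PG}, the bound. The boundary cases you flag are harmless: $V_t(0)=\hch{t}(\thch{t})$ and $Q_t(0,\lt_t)=\hch{t}(\lt_t)$ also hold when a threshold equals $0$, and the same is true at $\PufferStoremax$.

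The step that fails is your justification of $\kappa$-strong concavity. After removing $\pem F_{t+1}(-y)$ from $h_t'$, the remainder $y\mapsto\EE\big[V_{t+1}'(y+Z_{t+1})\1_{\{0<y+Z_{t+1}<\PufferStoremax\}}\big]$ is \emph{not} nonincreasing. As $y$ grows, mass enters the window at $y+Z_{t+1}=0$ with weight $V_{t+1}'(0)\ge\pout>0$, so its derivative contains $+V_{t+1}'(0)f_{t+1}(-y)$. Concavity of $V_{t+1}$ does not transfer, because $x\mapsto V_{t+1}([x]_0^{\PufferStoremax})$ has a convex kink at $0$. Already for $t=T-1$ this derivative is $\pout\big(f_T(-y)-f_T(\PufferStoremax-y)\big)$, which can be positive. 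So your intermediate claim $h_t''\le-\pem f_{t+1}(-y)$ is false in general. The correct formula is \eqref{eq:A2},
\[
h_t''(y)=-\big(\pem-V_{t+1}'(0)\big)f_{t+1}(-y)-V_{t+1}'(\PufferStoremax)f_{t+1}(\PufferStoremax-y)+\int_0^{\PufferStoremax}V_{t+1}''(z)f_{t+1}(z-y)\,dz ,
\]
and all three of its terms are nonpositive. For $y\in[0,\PufferStoremax]$ every density argument lies in $[-\PufferStoremax,\PufferStoremax]$, so you may use $f_{t+1}\ge\mu_Z$ in each term. The coefficients then telescope to $\pem-V_{t+1}'(0)+V_{t+1}'(\PufferStoremax)+V_{t+1}'(0)-V_{t+1}'(\PufferStoremax)=\pem$, which gives $h_t''\le-\kappa$ a.e.\ on $[0,\PufferStoremax]$.

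So the modulus you claim is correct, but it comes from combining the lower boundary, the upper boundary and the interior curvature of $V_{t+1}$. The lower-boundary term alone yields only $(\pem-\pin)\mu_Z$, which would worsen the constant $2/(\alpha\kappa)$. With this replacement, or by citing the strong concavity shown in the proof of \cref{lem:KL}, your proof goes through.
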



\section{Numerical Results}\label{sec:numerics}
We implemented the projected stochastic policy-gradient algorithm for a \(24\)-hour horizon, following the pathwise derivative recursion \cref{eq:pathwise-a}, the estimator \(G\) in \cref{eq:C4,eq:C5}, and the projected update in \cref{alg:Simultaneous Policy Gradient}.

For the illustration we used
$$ P_{\max}=1,\qquad p_{\rm out}=0.4,\quad p_{\rm in}=1,\quad p_{\rm em}=3, $$
and independent $Z_t \sim N(\mu_t,\sigma_Z^2)$ with $\mu_t =0.28 \sin\Big( \frac{2\pi(t-6)}{24}\Big)-0.02$ and $\sigma_Z = 0.22$. Thus, the system has a heat deficit overnight/morning, a positive surplus around midday, and again a deficit in the evening, see \cref{fig:heat_balance}.

\begin{figure}[H]
	\includegraphics[scale=0.6]{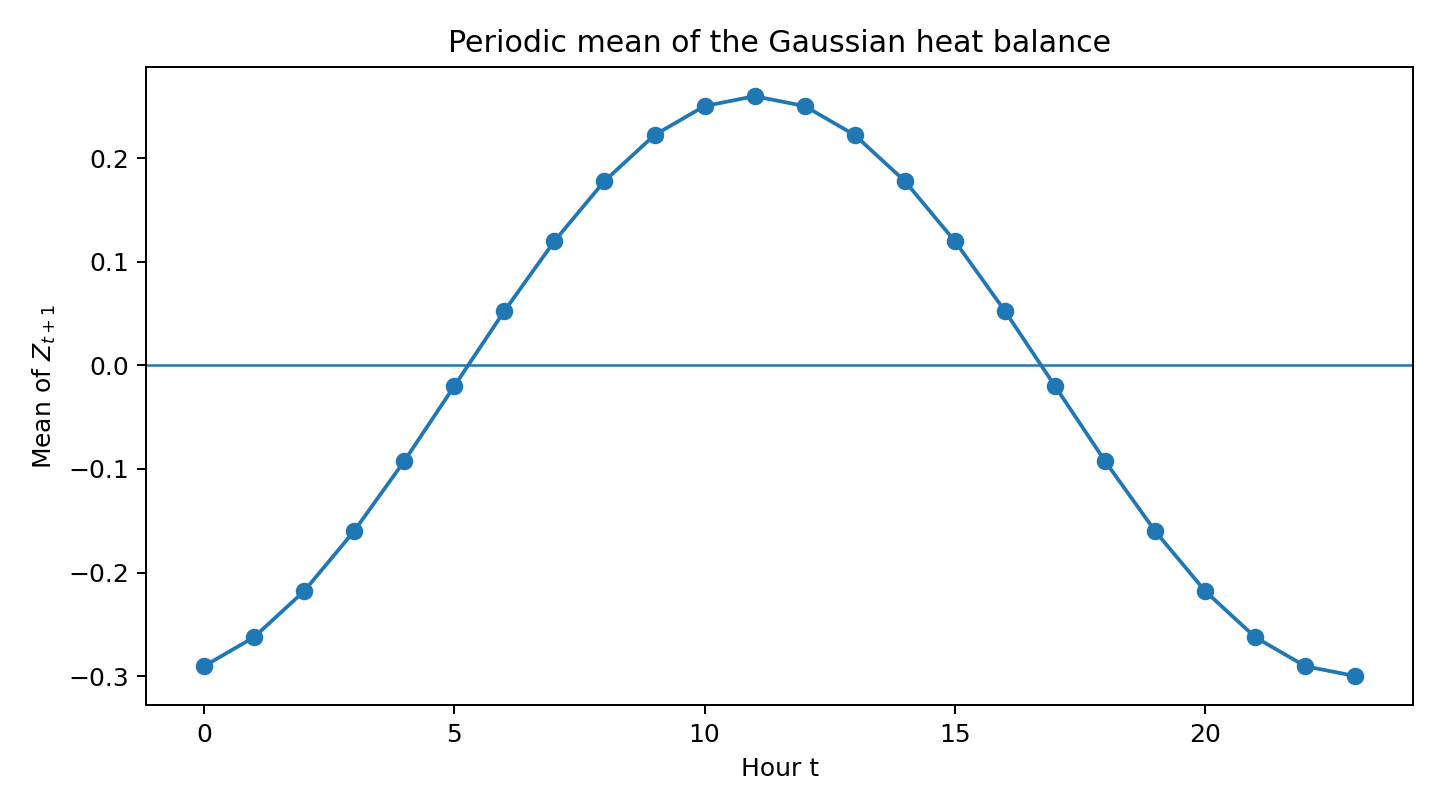}
	\caption{Mean of the heat balance.}\label{fig:heat_balance}
\end{figure}

For the  initial-state distribution $\rho$ we chose a probability \(0.1\) at each endpoint and the remaining \(0.8\) uniformly distributed on \((0,1)\) to meet the assumptions of  \cref{thm:PG}. The initial threshold profile is constant with
$$ \underline\vartheta_t^{(0)}=0.25,\qquad \overline\vartheta_t^{(0)}=0.75, \qquad t \in \TimeIndex'.$$
The baseline experiment uses
\[
N=2048,\qquad K=1200,\qquad \eta=0.01.
\] 
The training budget is therefore $KN=2\,457\,600$ episodes, containing $TKN=58\,982\,400$ disturbance realizations. Evaluation episodes are additional and are not included in this training budget. For this example, \cref{eq:L-gradient} gives
\[
L_{\nabla\lo}\approx18383.26,
\qquad
1/L_{\nabla\lo}\approx5.44\times10^{-5}.
\]
This is an extremely conservative step size, and we use the empirical step size $\eta=0.01$ instead. We received the following results for the thresholds listed in  \cref{tab:thresholds_K1200}.  For time points $t \in \{0,6,12,18,23\}$  \cref{fig:selected_thresholds} plots the upper and lower thresholds over the $K=1200$ iterations. The threshold policy $(\ths^{(k)}_t)_t$ is plotted in \cref{fig:threshold_profile} for selected $k \in\{0,25,100,300,600,1200\} $. The desired storage level is high before periods of expected deficit and much lower around the period of positive heat balance. Near the end of the horizon there is also a visible terminal-value effect. Using $20,000$ samples we estimate the objective function $\lo$, see \cref{fig:objective_pr}. 

\begin{figure}
	\includegraphics[scale=0.7]{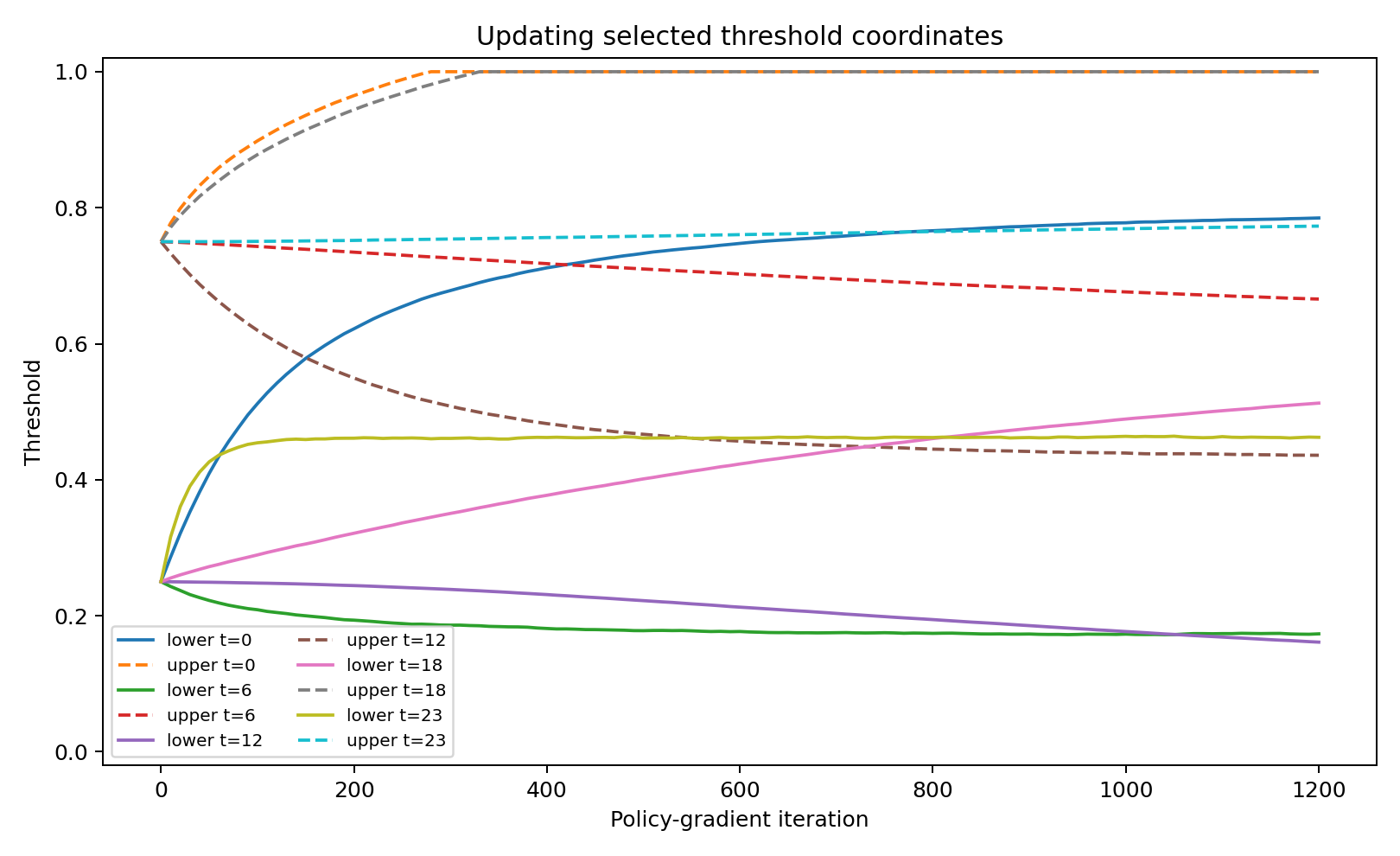}
	\caption{Plots of threshold iterations $(\ths_t^{(k)})_k$ for $t \in \{0,6,12,18,23\}$}\label{fig:selected_thresholds}
\end{figure}

\newpage
\begin{table}[htbp]
\centering
\caption{Threshold parameters from one training run after $K=1200$ iterations with batch size $N=2048$ and step size $\eta=0.01$. The disturbances satisfy $Z_{t+1}\sim N(\mu_{t+1},\sigma_Z^2)$ with $\sigma_Z=0.22$.}
\label{tab:thresholds_K1200}
\begin{tabular}{c|r|cc}
\hline
$t$ & $\mu_{t+1}$ 
    & $\underline{\vartheta}_t^{(1200)}$
    & $\overline{\vartheta}_t^{(1200)}$ \\
\hline
0  & $-0.2905$ & $0.7849$ & $1.0000$ \\
1  & $-0.2625$ & $0.7220$ & $0.9980$ \\
2  & $-0.2180$ & $0.6221$ & $0.9247$ \\
3  & $-0.1600$ & $0.5090$ & $0.8619$ \\
4  & $-0.0925$ & $0.3901$ & $0.7979$ \\
5  & $-0.0200$ & $0.2753$ & $0.7362$ \\
6  & $ 0.0525$ & $0.1733$ & $0.6657$ \\
7  & $ 0.1200$ & $0.0888$ & $0.5721$ \\
8  & $ 0.1780$ & $0.0481$ & $0.4779$ \\
9  & $ 0.2225$ & $0.0646$ & $0.4180$ \\
10 & $ 0.2505$ & $0.1109$ & $0.3894$ \\
11 & $ 0.2600$ & $0.1419$ & $0.3923$ \\
12 & $ 0.2505$ & $0.1611$ & $0.4359$ \\
13 & $ 0.2225$ & $0.1872$ & $0.5279$ \\
14 & $ 0.1780$ & $0.2221$ & $0.6469$ \\
15 & $ 0.1200$ & $0.2584$ & $0.7801$ \\
16 & $ 0.0525$ & $0.2979$ & $0.9108$ \\
17 & $-0.0200$ & $0.3722$ & $1.0000$ \\
18 & $-0.0925$ & $0.5126$ & $1.0000$ \\
19 & $-0.1600$ & $0.6423$ & $1.0000$ \\
20 & $-0.2180$ & $0.7101$ & $1.0000$ \\
21 & $-0.2625$ & $0.7183$ & $1.0000$ \\
22 & $-0.2905$ & $0.6482$ & $0.9169$ \\
23 & $-0.3000$ & $0.4623$ & $0.7729$ \\
\hline
\end{tabular}
\end{table}

\newpage

\begin{figure}[H]
	\includegraphics[scale=0.7]{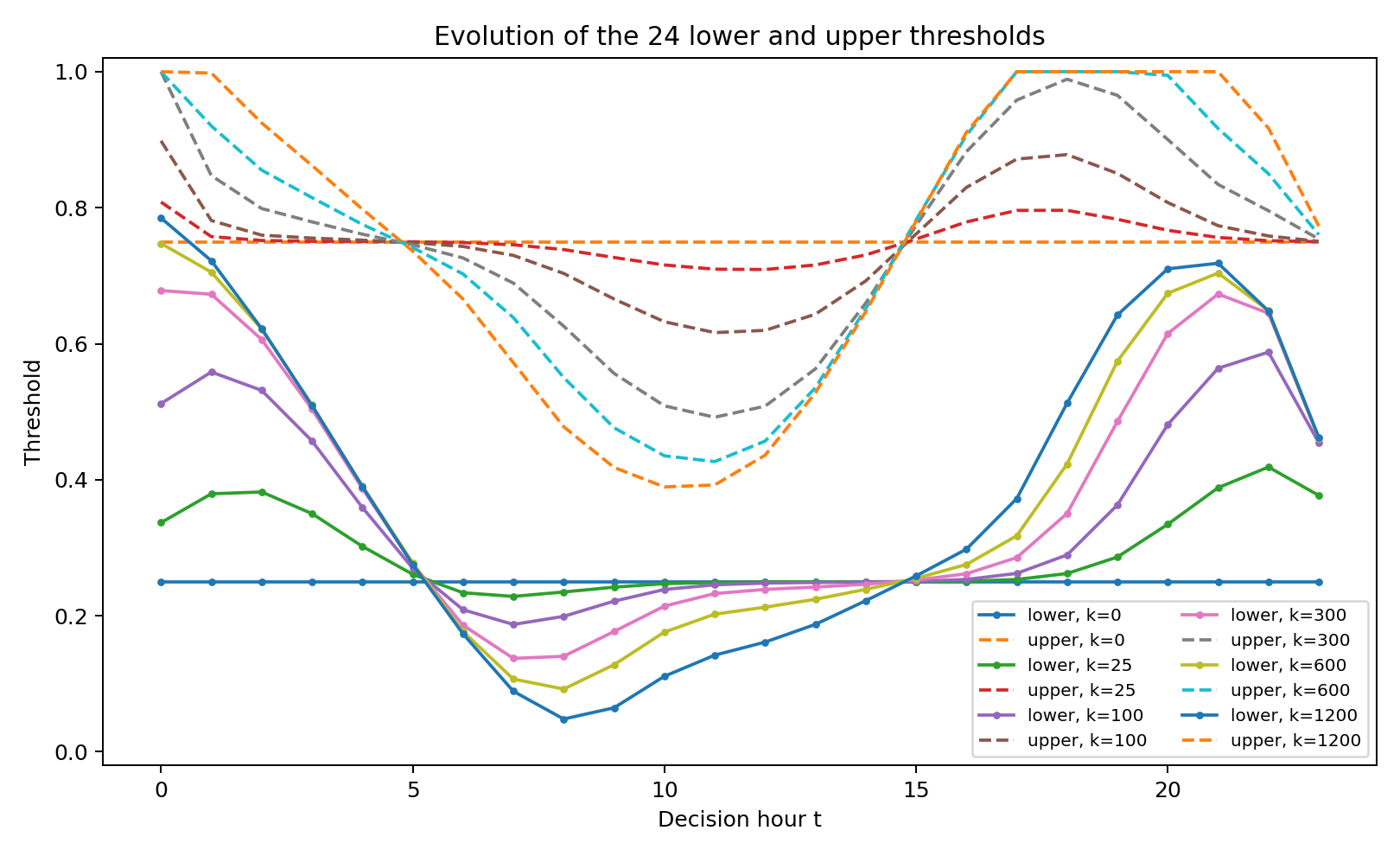}
	\caption{Plots of threshold policy $(\ths^{(k)}_t)_t$ for $k \in\{0,25,100,300,600,1200\}$}\label{fig:threshold_profile}
\end{figure}

\begin{figure}[H]
	\includegraphics[scale=0.7]{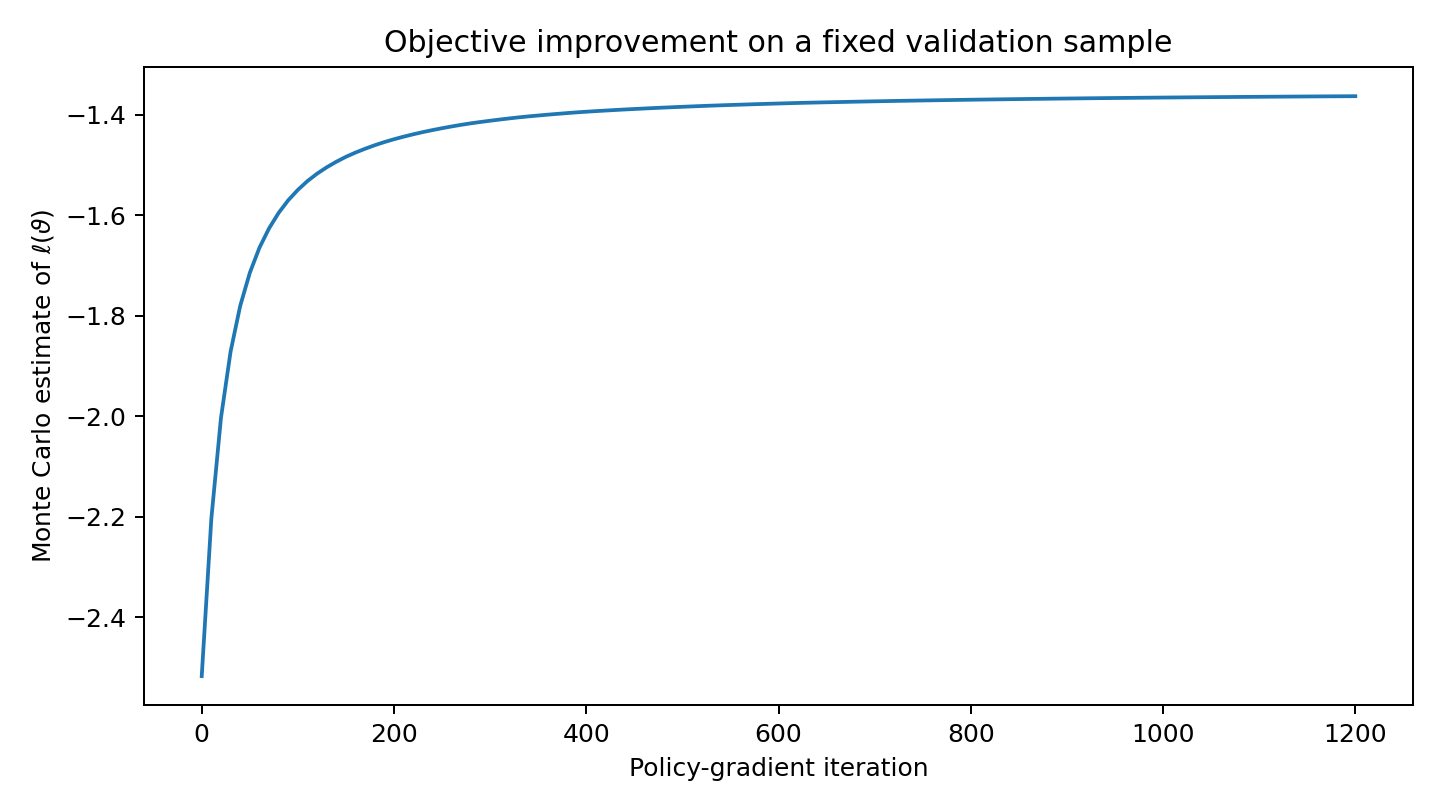}
	\caption{Monte Carlo estimates of $\ell(\vartheta).$}\label{fig:objective_pr}
\end{figure}

\newpage
\appendix 

\section{ Proof of  \cref{thm: optimal policies are threshold policies (abs cont)}}\label{sec:appendixA}

Before we start, we need the following auxiliary result:

\begin{lemma}
\label{lem:clipped-expectation}
Let $Z$ be an integrable random variable with absolutely continuous
distribution function $F$ and density $f$. Let
$G\in ACD[0,P_{\max}]$ and define
\[
H(y)
\coloneqq
\E\left[
G\left([Z+y]_0^{P_{\max}}\right)
\right],
\qquad y\in\R.
\]
Then $H\in ACD(\R)$ and
\begin{align}
H'(y)
&=
\E\left[
G'(Z+y)
\1_{\{0<Z+y<P_{\max}\}}
\right]
\label{eq:Hprime-simple}\\
&=
G'(P_{\max})F(P_{\max}-y)
-
G'(0)F(-y)
-
\int_0^{P_{\max}}G''(z)F(z-y)\,dz.
\label{eq:Hprime-ibp}
\end{align}
Moreover, for almost every $y\in\R$,
\begin{equation}
H''(y)
=
G'(0)f(-y)
-
G'(P_{\max})f(P_{\max}-y)
+\int_0^{P_{\max}}G''(z)f(z-y)\,dz.
\label{eq:Hsecond}
\end{equation}
\end{lemma}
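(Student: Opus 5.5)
We prove the lemma by writing $H$ as a convolution-type integral against $F$ and differentiating under the integral, with the case split coming from the clipping.

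\textbf{Step 1 (rewrite $H$).} Since $G(c) = G(0)+\int_0^c G'(z)\,dz$ for $c\in[0,P_{\max}]$ and $[Z+y]_0^{P_{\max}}\in[0,P_{\max}]$,
\[
G\bigl([Z+y]_0^{P_{\max}}\bigr) = G(0)+\int_0^{P_{\max}} G'(z)\,\1_{\{z< Z+y\}}\,dz .
\]
Taking expectations and using Fubini ($G'$ is bounded on the compact interval), we get
\[
H(y)=G(0)+\int_0^{P_{\max}} G'(z)\,\bigl(1-F(z-y)\bigr)\,dz .
\]
This representation avoids differentiating the clipped argument pathwise.

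\textbf{Step 2 (first derivative).} Since $F$ is absolutely continuous with density $f$, differentiate under the integral in $y$:
\[
H'(y)=\int_0^{P_{\max}} G'(z) f(z-y)\,dz = \E\bigl[G'(Z+y)\1_{\{0<Z+y<P_{\max}\}}\bigr],
\]
which is \eqref{eq:Hprime-simple}.

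To justify this step rigorously, write
\[
H(y)-H(y_0)=\int_0^{P_{\max}} G'(z)\bigl(F(z-y_0)-F(z-y)\bigr)\,dz=\int_0^{P_{\max}}G'(z)\int_{y_0}^{y} f(z-s)\,ds\,dz .
\]
Then swap the integrals by Tonelli/Fubini; $f$ is integrable and $G'$ is bounded. This shows $H(y)=H(y_0)+\int_{y_0}^y \phi(s)\,ds$ with
\[
\phi(s)=\int_0^{P_{\max}}G'(z)f(z-s)\,dz .
\]
Continuity of $\phi$ follows from $L^1$-continuity of translations of $f$. Hence $H\in C^1$ with $H'=\phi$.

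To get \eqref{eq:Hprime-ibp}, integrate by parts in $z$. Because $G'\in \AC[0,P_{\max}]$ and $z\mapsto F(z-y)$ is absolutely continuous, the product rule for AC functions applies and gives
\[
\int_0^{P_{\max}} G'(z) f(z-y)\,dz=\bigl[G'(z)F(z-y)\bigr]_0^{P_{\max}}-\int_0^{P_{\max}}G''(z)F(z-y)\,dz .
\]
Here $G''$ exists a.e. and is integrable.

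\textbf{Step 3 (second derivative).} Starting from \eqref{eq:Hprime-ibp}, each term is absolutely continuous in $y$ on compacts:
\begin{itemize}
\item $G'(P_{\max})F(P_{\max}-y)$ and $G'(0)F(-y)$ are AC because $F$ is.
\item For the integral term, apply the same Fubini argument as in Step 2, with $G''\in L^1$ in place of the bounded $G'$. This gives
\[
\int_0^{P_{\max}}G''(z)F(z-y)\,dz - \int_0^{P_{\max}}G''(z)F(z-y_0)\,dz = -\int_{y_0}^y\int_0^{P_{\max}} G''(z)f(z-s)\,dz\,ds .
\]
\end{itemize}
Fubini is legitimate because $\int\!\!\int |G''(z)|f(z-s)\,dz\,ds\le \|G''\|_{1}$ over any $s$-interval. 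So $H'\in\AC(\R)$, and together with Step 2 this gives $H\in \ACD(\R)$. Differentiating a.e. (Lebesgue differentiation) gives \eqref{eq:Hsecond}, using $\tfrac{d}{dy}F(c-y)=-f(c-y)$ a.e.

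\textbf{Main obstacle.} The delicate point is the Fubini/integrability bookkeeping. $f$ need not be bounded or continuous, and $G''$ is only $L^1$, so no pointwise differentiation under the integral is possible. Working throughout with integrated (AC) forms is what makes the argument go through.
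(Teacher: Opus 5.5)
Your proof is correct and follows essentially the same route as the paper. Both arguments reduce $H$ to $G(P_{\max})-\int_0^{P_{\max}}G'(z)F(z-y)\,dz$: you do this via the layer-cake identity for $G(c)-G(0)$, while the paper uses the atom decomposition plus an integration by parts. Both then apply Fubini twice, first with the bounded $G'$ and then with $G''\in L^1$, with the integration by parts in $z$ giving \eqref{eq:Hprime-ibp}. Your justification that $H'$ is continuous (boundedness of $G'$ together with $L^1$-continuity of translations of $f$) is the right one. The paper's remark that a convolution of two $L^1$ functions has a continuous representative is false in general; it only works here because the zero-extension of $G'$ is also bounded.
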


\begin{proof}
Since $G$ is continuous on the compact interval
$[0,P_{\max}]$, the random variable
$G([Z+y]_0^{P_{\max}})$ is bounded and hence integrable. We can rewrite $H$ as
\begin{align*}
    H(y)&= G(0)F(-y) +G(\PufferStoremax)(1-F(\PufferStoremax-y))+\int_0^\PufferStoremax G(z)f(z-y)dz.
\end{align*}

Using integration by parts, we obtain
\begin{equation}
H(y)
=
G(P_{\max})
-
\int_0^{P_{\max}}
G'(z)F(z-y)\,dz.
\label{eq:H-representation}
\end{equation}
Since for $z,y_1,y_2\in\R$
\begin{align*}
    F(z-y_1)-F(z-y_2)=\int_{y_1}^{y_2} f(z-s)ds,
\end{align*}
we obtain with Fubini
\begin{align*}
    H(y_2)-H(y_1)&= \int_{y_1}^{y_2} \left(\int_0^{P_{\max}}
G'(z)f(z-s)\,dz. \right)\,ds
\end{align*}
Consequently $H$ is absolutely continuous and 
\[
H'(y)
=
\int_0^{P_{\max}}
G'(z)f(z-y)\,dz,\quad \mbox{ for a.e.\ } y,
\]
which is exactly \eqref{eq:Hprime-simple} after the substitution
$z=Z+y$. Now extend $G'$ by zero outside of $[0,P_{\max}].$ Since the extension belongs to $L^1(\R)$ and $f\in L^1(\R)$ the right-hand side is a convolution of two $L^1$- functions. It therefore has a continuous representative. Hence the above identity actually determines a continuous derivative of $H.$

Since $G'\in AC[0,P_{\max}]$, another integration by parts gives
\[
\int_0^{P_{\max}}G'(z)f(z-y)\,dz
=
G'(P_{\max})F(P_{\max}-y)
-
G'(0)F(-y)
-
\int_0^{P_{\max}}G''(z)F(z-y)\,dz.
\]
This proves \eqref{eq:Hprime-ibp}.

Finally, $G''\in L^1[0,P_{\max}]$. Hence, by Fubini's theorem
and the absolute continuity of $F$, the last expression is locally
absolutely continuous as a function of $y$. Differentiating it
almost everywhere yields \eqref{eq:Hsecond}. Thus
$H'\in AC_{\mathrm{loc}}(\R)$ and therefore $H\in ACD(\R)$.
\end{proof}

\begin{proof}[Proof of  \cref{thm: optimal policies are threshold policies (abs cont)} by backwards induction.]
	$V_T\in\ACD[0,\PufferStoremax]$ is concave, and $V_T'=\pout\in[\pout,\pin]$.
    
    For our induction hypothesis, we assume that the assertion holds for $V_{t+1}.$
    
    We  consider first the function $\holdt$. First, define 
    \begin{align*}
        g_t(y)\coloneqq -\pem \E[(Z_{t+1}+y)^-].
    \end{align*}
    By the assumption $\E|Z_{t+1}|<\infty$ we obtain a.e.\
    \begin{align*}
        g_t'(y) = \pem F_{t+1}(-y),\quad g_t''(y)=- \pem f_{t+1}(-y).
    \end{align*}
    Hence $g_t\in ACD(\R).$ Thus, by  \cref{lem:clipped-expectation} applied to $V_{t+1}$ and the induction hypothesis  $h_t \in \ACD(\R)$ and 
	\begin{align} \nonumber
	\holdt'(y) &=  \pem   F_{t+1}(-y) + \Exv\big[ V_{t+1}'(Z_{t+1} + y) \1_{\{0 \le Z_{t+1} + y \le \PufferStoremax\}} \big]  \\ \label{eq:A1} &=
		\begin{aligned}[t]
			&\big(\pem - V_{t+1}'(0)\big) F_{t+1}(-y) +  V_{t+1}'(\PufferStoremax) F_{t+1}(\PufferStoremax-y) \\
			& - \int_{0}^{\mathrlap{\PufferStoremax}} V_{t+1}''(z) F_{t+1}(z-y) \diff z, 
		\end{aligned} \\ \label{eq:A2} 
	\holdt''(y) &= 
	\begin{aligned}[t]
		&-\big(\pem - V_{t+1}'(0)\big) f_{t+1}(-y) -  V_{t+1}'(\PufferStoremax) f_{t+1}(\PufferStoremax-y) \\
		& + \int_{0}^{\mathrlap{\PufferStoremax}} V_{t+1}''(z) f_{t+1}(z-y) \diff z, 
	\end{aligned} 
	\end{align}
	Further, we have $ \pout \le  V_{t+1}'\le \pin$ and  $ V_{t+1}''(z) \le 0 $ a.e.  Hence, all terms on the right-hand side of \eqref{eq:A2} are non-positive. Thus, $h''_t \le 0$ a.e.\ which implies for $x<y$
\begin{align*}
    h'_t(y)-h'_t(x) = \int_x^y h''_t(z)dz \le 0.
\end{align*}
Thus $h_t'$ is non-increasing and hence  $ \holdt $ is concave on $ \R $.	

    From \eqref{eq:A1} and dominated convergence  we get
	\begin{align*}
	\lim\limits_{y \to - \infty} h_t'(y) = \pem, \quad   \lim\limits_{y \to + \infty} h_t'(y)=0.
	\end{align*}

Since \(h_t'\) is continuous and non-increasing and
$0<\pout\le\pin<\pem,$
the equations
\[
h_t'(y)=\pin
\quad\text{and}\quad
h_t'(y)=\pout
\]
have non-empty solution sets. In particular the level sets $\{y : h'_t(y)=\pin\}$ and $\{y : h'_t(y)=\pout\}$ are non-empty, closed and bounded. Thus, minimal and maximal elements exist.
Define
\begin{align*}
	\tthcht \coloneqq \min \{y \in \R \colon h_t'(y) = \pin\}
	\end{align*}
and
  \begin{align*}
	\tthdst \coloneqq \max \{y \in \R \colon h_t'(y) = \pout \}.
	\end{align*}
Since $\holdt'$ is monotonically non-increasing and $\pout \le \pin$, we get $\tthcht \le \tthdst.$ The constrained maximum points are therefore
\begin{align*}
    \thcht \coloneqq \clipstorage{\tthcht} \quad\mbox{ and }  \quad \thdst \coloneqq \clipstorage{\tthdst}
\end{align*}
and again ordered.

	We will now show that the threshold policy with thresholds $\thcht$ and $\thdst$ is an optimal policy. We consider the post-state-value-function
	\begin{equation}\label{eq:Qf}
	Q_t(x,y) = \begin{cases}
	\hdst(y) + \pout x, &y \le x,\\
	\hcht(y) + \pin x, &y > x.
	\end{cases}
	\end{equation}
	An optimal decision rule $\pol^*_t\colon \statespace \to \statespace$ is a measurable map that maximizes $Q_t$ with respect to the post-state $y$ for every state $x \in \statespace$, i.e.
	\begin{align*}
	\pol^*_t(x) \in \argmax_{y \in [0,\PufferStoremax]} Q_t(x,y) \quad (x \in \statespace).
	\end{align*}
	We distinguish three cases.

	\begin{description}
		\item[1.\ Case: $\thcht \le x \le \thdst$] 
        Since \(\overline h_t\) is non-decreasing on
\((-\infty,\tthdst]\), it follows that \(y=x\) maximizes the
first branch of \eqref{eq:Qf} over \(y\in[0,x]\).
Similarly, since \(\underline h_t\) is non-increasing on
\([\tthcht,\infty)\), the point \(y=x\) maximizes the second
branch over \(y\in[x,\PufferStoremax]\).
Hence
\[
\pi_t^*(x)=x.
\]
        
		\item[2.\ Case: $0 \le x < \thcht$] 
        Then
\[
\max_{y\le x}Q_t(x,y)=Q_t(x,x).
\]
On the other hand,
\begin{align*}
Q_t(x,\thcht)
&=
\underline h_t(\thcht)+\pin x
\ge
\underline h_t(x)+\pin x
=
Q_t(x,x).
\end{align*}
Thus an optimal post-state is
\[
\pi_t^*(x)=\thcht.
\]

		\item[3.\ Case: $\thdst < x \le \PufferStoremax$] The argument is analogous and gives $\pol^*_t(x) = \thdst$.
	\end{description} 
Combining the three cases proves
\[
\pi_t^*(x)
=[x]^\thds{t}_\thch{t}.
\]
 To complete the induction it remains to show that $V_t \in \ACD[0,\PufferStoremax]$, $V_t$ is concave and $V_t' \in [\pout,\pin]$. We have
	\begin{equation} \label{eq: optimal value function structure}
	V_t(x) = Q_t\big(x,\pol^*_t(x)\big) = \begin{cases}
	\pin x + \hcht(\thcht), & 0 \le x < \thcht, \\
	\holdt(x), & \thcht \le x \le \thdst, \\
	\pout x + \hdst(\thdst), & \thdst < x \le \PufferStoremax.
	\end{cases}
	\end{equation}
	Obviously, $V_t$ is continuous on $[0,\PufferStoremax]$ and we obtain
    \begin{equation} \label{eq: ac_derivative of V_t (abs cont)}
	V_t'(x) = \begin{cases}
	\pin, & 0 < x < \thcht, \\
	\holdt'(x), & \thcht < x < \thdst, \\
	\pout, & \thdst < x < \PufferStoremax.
	\end{cases}
	\end{equation}
    Whenever $\thcht\in(0,\PufferStoremax),$ clipping is inactive and therefore $h_t'(\thcht)=\pin.$ Likewise, whenever $\thdst \in(0,\PufferStoremax)$ we obtain $h'_t(\thdst)=\pout.$ Thus, the adjacent pieces meet continuously at every interior threshold point. At a threshold point equal to $0$ or $\PufferStoremax,$ only the appropriate one-sided derivative is relevant. Consequently, $V_t\in C^1([0,\PufferStoremax])$ in the standard one-sided sense at the endpoints.   Since $h_t'$ is absolutely continuous, the resulting derivative $V'_t$ is absolutely continuous on $[0,\PufferStoremax]$ and $V_t \in \ACD[0,\PufferStoremax]$. Moreover, because $h'_t$ is non-increasing and $\pout \le h_t'\le \pin$, $V_t'$ is non-increasing on the whole interval with derivative in $[\pout,\pin]$. Therefore,  $V_t$ is concave.

     This concludes the induction step.
\end{proof}

\section{Proof of  \cref{rem: properties of the value function under Lipschitz assumption}}\label{sec:appendixB}
\begin{proof}
The proof is quite similar to the one of  \cref{thm: optimal policies are threshold policies (abs cont)}. We only prove \eqref{eq: Lipschitz constant of value function (abs cont)} here.
	Let $I_{t+1}\colon \R \to \R, y \mapsto \int_{0}^{\PufferStoremax} V_{t+1}''(z) F_{t+1}(z-y) \diff z$. For all $y_1,y_2 \in \R$ we have
	\begin{align*}
		\abs{I_{t+1}(y_2) - I_{t+1}(y_1)} &\le \int_{0}^{\PufferStoremax} \abs{V_{t+1}''(z)} \abs{F_{t+1}(z-y_2) - F_{t+1}(z-y_1)} \diff z \\
		&\le  \int_{0}^{\PufferStoremax} \abs{V_{t+1}''(z)} \diff  z \, L_{t+1} \abs{y_2 - y_1} \\
		&=  \int_{0}^{\PufferStoremax} -V_{t+1}''(z) \diff z \, L_{t+1} \abs{y_2 - y_1} \\
		&=  \big(V_{t+1}'(0)-V_{t+1}'(\PufferStoremax)\big) L_{t+1} \abs{y_2 - y_1}. 
	\end{align*}
	Hence,  we get 
	\begin{align*}
		\Lip(h'_t) &\le \abs{\pem - V'_{t+1}(0)} L_{t+1} + \abs{V'_{t+1}(\PufferStoremax)} L_{t+1} + \big(V'_{t+1}(0)-V'_{t+1}(\PufferStoremax)\big) L_{t+1} \\
		&= \Big( \pem - V'_{t+1}(0) + V'_{t+1}(\PufferStoremax) + V'_{t+1}(0)-V'_{t+1}(\PufferStoremax)\Big) L_{t+1}\\
		&= \pem L_{t+1}.
	\end{align*}

    It remains to transfer the Lipschitz property of $h_t'$ to $V_t'$.
By \cref{eq: ac_derivative of V_t (abs cont)},
\[
V_t'(x)
=
\begin{cases}
p_{\rm in},
&0\le x<\underline\vartheta_t^*,\\
h_t'(x),
&\underline\vartheta_t^*
\le x\le\overline\vartheta_t^*,\\
p_{\rm out},
&\overline\vartheta_t^*<x\le P_{\max}.
\end{cases}
\]
If a threshold lies in the interior, the adjacent pieces agree at
that threshold. Thus $V_t'$ is continuous.

Let $0\le x<y\le P_{\max}$. Split the interval $[x,y]$, if
necessary, at the two thresholds. The function $V_t'$ is constant
on the parts outside
$[\underline\vartheta_t^*,\overline\vartheta_t^*]$, whereas on the
middle part it coincides with $h_t'$. Hence
\[
|V_t'(y)-V_t'(x)|
\le
\Lip(h_t')|y-x|.
\]
The same argument applies when one or both thresholds are located at
the boundary. Therefore,
\[
\Lip(V_t')
\le
\Lip(h_t')
\le
p_{\rm em}L_{t+1}.
\]
This proves the assertion.
\end{proof}

\section{Proof of the Results in \cref{sec:algorithm}}\label{sec:appendixC}

Beforewe state the proofs for \cref{sec:algorithm}, we introduce some additional notation. Similar to $\hcht$ and $\hdst$, we write
\begin{align*}
    \hchpara{t}(y) &\coloneqq \hpara{t}(y) - \pin y , &\achpara{t} &\coloneqq (\hchpara{t})'(y) = \apara{t} - \pin,\\
    \hdspara{t}(y) &\coloneqq \hpara{t}(y) - \pout y,
    & \adspara{t} &\coloneqq (\hdspara{t})'(y) = \apara{t} - \pout
\end{align*}
for $y \in \R$ and $t \in \TimeIndex'$.



\begin{proof}[Proof of \cref{lem:derivative in expectation}]
    Follows from \cref{lem:pathhderivative,rmk: switching expectation value and derivatie}.
\end{proof}

\begin{proof}[Proof of \cref{lem:pathhderivative}]
As a preliminary, it holds 
\[ r^{(1)}_{t}(x, \clip{x}{\lt_t}{\ut_t} )= -\pin (\lt_t-x)^+ + \pout (\ut_t-x)^-, \quad x \in \statespace. \]

a) Let $\ths \in \thsspace$. We define $\forbiddenset{T} \coloneqq \{0,\PufferStoremax\}$ and $\forbiddenset{t} \coloneqq \criticalset{t}$ for $t \in \TimeIndex'$. Further, let  
    \begin{equation*} 
        \goodset{t}(y) \coloneqq \begin{aligned}[t]
            \{\episode \in \episodespace \ \vert \  &\lt_a + \textstyle{\sum_{k=a}^{b}} z_{k+1} \not \in \forbiddenset{b+1} \text{ for }  t+1 \le a < b \le T-1,\\
            &y + \textstyle{\sum_{k=t}^{b'}} z_{k+1} \not \in \forbiddenset{b'+1} \text{ for }  t \le b' \le T-1,\\
            &\ut_a + \textstyle{\sum_{k=a}^{b}} z_{k+1} \not \in \forbiddenset{b+1} \text{ for }  t+1 \le a < b \le T-1 \}
        \end{aligned} 
    \end{equation*}
    for $y \in \R$ and $t \in \TimeIndex'$. We have $\PP\big(\Episode \in \goodset{t}(y)\big)=1 $ for all $t \in \TimeIndex'$ and $y \in \R$ due to the distribution assumption on $(Z_{t+1})$ in \cref{ass:PG}. Therefore, it is sufficient to show by backwards induction over $t \in \TimeIndex'$ that for all $y \in \R$ and  $\episode \in \goodset{t}(y)$ the functions $\hsmp{t}$ is differentiable at $y$ with the derivative given in \cref{eq:pathwise-a}. For $t=T-1$, let $y \in \R$ and $\episode \in \goodset{T-1}(y)$. It holds
\[
\hsmp{T-1}(y)=-\pem(y+z_T)^-+\pout[y+z_T]_0^{P_{\max}}.
\]
With $\goodset{T-1}(y) = \big\{\episode \in \episodespace \mid y+z_T \not \in \{0,\PufferStoremax\}\big\}$ the kink points are away from $y+z_T$ and
\[
(\hsmp{T-1})'(y)=\pem\1_{\{y+z_T<0\}}+\pout\1_{\{0<y+z_T<P_{\max}\}}.
\]

Now suppose that for all $\tilde y \in \R$ and all $\episode \in \goodset{t+1}(\tilde y)$
the derivative formula is valid at time $t+1$. Let $y \in \R$ and $\episode \in \goodset{t}(y)$. Plugging $\Vsmp{t+1}$ into the definition of $\hsmp{t}$ yields the recursion
\begin{align*}
    \hsmp{t}(y)&=-p_{\rm em}(y+z_{t+1})^-+\Vsmp{t+1}([y+z_{t+1}]_0^{P_{\max}})\\
    &= -p_{\rm em}(y+z_{t+1})^-+r^{(1)}_{t+1}( y + z_{t+1}, \clip{y + z_{t+1}}{\lt_{t+1}}{\ut_{t+1}} ) + \hsmp{t+1}(\clip{y+z_{t+1}}{\lt_{t+1}}{\ut_{t+1}}).
\end{align*}
In particular, $y + z_{t+1}$ is away from $ \criticalset{t+1}$ and therefore, the first two terms are differentiable at $y$. Moreover, $ \episode $ is contained in the sets $\goodset{t+1}(\lt_{t+1})$, $\goodset{t+1}(y + z_{t+1})$ and $ \goodset{t+1}(\ut_{t+1})$, so 
$$\episode \in \goodset{t+1}(\clip{y+z_{t+1}}{\lt_{t+1}}{\ut_{t+1}}).$$
Therefore, $\hsmp{t+1}$ is differentiable at $\clip{y+z_{t+1}}{\lt_{t+1}}{\ut_{t+1}}$ and the chain rule gives
\begin{align} \label{eq: realized aprime}
\asmp{t}(y) &=
    \begin{aligned}[t]
        &p_{\rm em}\1_{\{y+z_{t+1}<0\}} + p_{\rm in} \1_{\{0<y+z_{t+1}<\underline\vartheta_{t+1}\}} \\
    &+\asmp{t+1}(y+z_{t+1})\1_{\{\underline\vartheta_{t+1}<y+z_{t+1}<\overline\vartheta_{t+1}\}} \\
    &+p_{\rm out}\1_{\{\overline\vartheta_{t+1}<y+z_{t+1}<P_{\max}\}}.
    \end{aligned}
\end{align}

b) Similar to a) let $\forbiddensettilde{t} \coloneqq \{\ltt, \utt\} \setminus \{0,\PufferStoremax\}$ for $t \in \TimeIndex'$ and
\begin{equation*} 
        \goodsettilde(\ths) \coloneqq \bigcap_{t\in\TimeIndex'} \begin{aligned}[t]
            \{ \episode \in \episodespace \ \vert \ & \lt_{a} + \textstyle{\sum_{k = a}^{t-1}} \dist{k+1} \not \in \forbiddensettilde{t} \text{ for } 0 \le a \le t-1,\\
            &x_0 + \textstyle{\sum_{k=0}^{t-1}} \dist{k+1} \not \in \forbiddensettilde{t},\\
            &\ut_{a} + \textstyle{\sum_{k = a}^{t-1}} \dist{k+1} \not \in \forbiddensettilde{t} \text{ for } 0 \le a \le t-1 \},
        \end{aligned}
    \end{equation*}
    for $\ths \in \thsspace$. Again $\PP\big(\Episode \in \goodsettilde(\ths)\big) = 1$ for all $\ths \in \thsspace$. For the partial derivative with respect to the lower threshold $\lt_t$, write the
realized total reward as
\[
\lsmp(\ths)
= C_t^e + r_t^{(1)}\left(\xsmp{t},[\xsmp{t}]_{\lt_t}^{\ut_t}\right) + \hsmp{t} \left([\xsmp{t}]_{\lt_t}^{\ut_t}\right),
\]
where $C_t^e$, $\xsmp{t}$ only depend on the past parameters $ \ths_0 , \dots, \ths_{t-1}$ and not on $\ths_t$. Similarly, $\hsmp{t}$ only depends on the future parameters $\ths_{t+1}, \dots, \ths_{T-1}$ and not on $\ths_t$. The function $\thsspace_t \to \R, \ths_t \mapsto \clip{x}{\lt_t}{\ut_t}$ for $x \in \statespace$ is differentiable (relative to $\thsspace_t$) with 
\[ \partial_{\lt_t} \clip{x}{\lt_t}{\ut_t} = \1_{\{x \le \lt_t, x \not = \PufferStoremax\}}. \]
For $e \in \goodsettilde(\ths)$ the sample state $\xsmp{t}$ can only be $x_0 + \sum_{k=0}^{t-1} z_{k+1}$, if the sample state path always stayed between the thresholds, or it is equal to $\lt_{a} + \sum_{k = a}^{t-1} \dist{k+1}$ or $\ut_{a} + \sum_{k = a}^{t-1}\dist{k+1}$ for $ 0 \le a \le t-1$, if a threshold was breached. Therefor, $e \in \goodset{t}(\xsmp{t})$ and with the further observation $e \in \goodset{t}(\lt_t) \cap \goodset{t}(\ut_t)$, the function $\hsmp{t}$ is differentiable at $[\xsmp{t}]_{\lt_t}^{\ut_t}$. By the chain rule 
$$\partial_{\lt_t} \lsmp (\ths) = G_{t,-}(\ths,\episode).$$
The upper-threshold formula follows analogously.

\end{proof}



For later use define
\[
F_t^{\vartheta,-}(a)
\coloneqq
\PP_\rho^\vartheta
(X_t\le a,\ X_t\neq\Pmax)
\]
and
\[
F_t^{\vartheta,+}(a)
\coloneqq
\PP_\rho^\vartheta
(X_t\ge a,\ X_t\neq0).
\]
\begin{remark}
    \cref{ass:PG} implies
    \begin{equation}
    F_t^{\vartheta,-}(a)\ge\alpha,
    \qquad
    F_t^{\vartheta,+}(a)\ge\alpha,
    \qquad
    a\in[0,\Pmax].
    \label{eq:activation-lower-bound}
    \end{equation}
    These inequalities can be seen as follows: For $t=0$,
    \[
    F_0^{\vartheta,-}(a)
    \ge\PP(X_0=0)\ge\alpha_0,
    \qquad
    F_0^{\vartheta,+}(a)
    \ge\PP(X_0=P_{\max})\ge\alpha_0.
    \]
    For $t\ge1$, independently of the previous post-state, $Z_t\le-P_{\max}$ implies $X_t=0$ and $Z_t\ge P_{\max}$ implies $X_t=P_{\max}$.
    Thus
    \begin{align*}
        F_t^{\vartheta,-}(a)\ge\alpha_Z,
    \qquad
    F_t^{\vartheta,+}(a)\ge\alpha_Z,
    \end{align*}
    for all $a\in[0,P_{\max}]$. Hence \eqref{eq:activation-lower-bound} follows with
    $\alpha=\min\{\alpha_0,\alpha_Z\}$.
\end{remark}

\begin{proof}[Proof of \cref{rmk: switching expectation value and derivatie}]
    The functions $\hsmp{t}$, $t \in \TimeIndex$, and $\lsmp$ are Lipschitz-continuous due to $r^{(1)}$, $r^{(2)}$ and $(x,\lt_t,\ut_t) \mapsto \clip{x}{\lt_t}{\ut_t}$ being Lipschitz-continuous in all arguments. Applying the dominated convergence theorem yields 
    \begin{equation*}
    \partial_{\lt_t}\lo(\ths)
    = \EE_\Episode[\partial_{\lt_t} \lrv(\ths)]  = F_t^{\vartheta,-}(\uth_t)
    \big(a_t^\vartheta(\uth_t)-\pin\big),
    \end{equation*}
    \begin{equation*}
    \partial_{\ut_t} \ell(\vartheta) = \EE_\Episode[\partial_{\ut_t} \lrv(\ths)] =  F_t^{\vartheta,+}(\oth_t) \big(a_t^\vartheta(\oth_t)-\pout\big),
    \end{equation*}
    and
    $$\apara{t}(y) = \EE_\Episode[\arv{t}(y)],\qquad  t \in \TimeIndex'.$$
\end{proof}

\begin{remark} With \cref{lem:derivative in expectation} it follows that $\Vpara{t}$ is differentiable on $\statespace \setminus \big(\{\lt_{t},\ut_t\} \cap (0,\PufferStoremax)\big)$ with
\begin{align} \label{eq: Vprime}
    ( \Vpara{t})'(x)=
    \begin{cases}
    \pin, & 0 \le x<\lt_t,\\
    \apara{t}(x), & \lt_t < x < \ut_t,\\
    \pout, & \ut_t<x\le \PufferStoremax.
    \end{cases}  
\end{align}
In particular, if a threshold is on the boundary $\{0,\PufferStoremax\}$ the function $\Vpara{t}$ is differentiable (relative to $\statespace$) on that boundary.
\end{remark}

\begin{remark}
    The following bounds hold
    \begin{equation} \label{eq:pathwise-bound}
        0 \le \asmp{t}(y), \apara{t}(y) \le \pem,
    \end{equation}
    for all $y \in \R, \ths \in \thsspace, \episode \in \episodespace$ and $ t \in \TimeIndex'$. The inequalities can be seen by backwards induction over \cref{eq: aprime,eq:pathwise-a}.
\end{remark}



\begin{proof}[Proof of  \cref{lem:unbiased}]
For one trajectory set \(G(\vartheta,E)\) as in \cref{lem:pathhderivative}, and for a
batch of \(N\) independent trajectories define
\[
 \widehat\nabla_N\ell(\vartheta)
 \coloneqq\frac1N\sum_{n=1}^N G(\vartheta,E^{(n)}).
\]
\cref{eq:grad-lower,eq:grad-upper} show that
\[
 \E[G(\vartheta,E)]=\nabla\ell(\vartheta),
 \qquad
 \E[\widehat\nabla_N\ell(\vartheta)]=\nabla\ell(\vartheta).
\]
Moreover, \eqref{eq:pathwise-bound} implies that every coordinate of
\(G(\vartheta,E)\) has absolute value at most \(\psum\).  Hence
\[
 \E\norm{G(\vartheta,E)-\nabla\ell(\vartheta)}_2^2
 =\sum_{j=1}^{2T}\operatorname{Var}(G_j(\vartheta,E))
 \le 2T\psum^2.
\]
Averaging \(N\) independent copies gives
\begin{equation}
 \E\norm{\widehat\nabla_N\ell(\vartheta)-\nabla\ell(\vartheta)}_2^2
 \le \frac{\sigma^2}{N},
 \qquad
 \sigma^2\coloneqq2T\psum^2.
 \label{eq:43}
\end{equation}

For the stochastic algorithm we need the corresponding conditional statement.  We obtain 
\begin{align*}
 \E\!\left[
   \widehat\nabla_N\ell(\vartheta^{(k)})\mid\mathcal F_k
 \right]
 &=\nabla\ell(\vartheta^{(k)}),\\
 \E\!\left[
  \norm{\widehat\nabla_N\ell(\vartheta^{(k)})
        -\nabla\ell(\vartheta^{(k)})}_2^2
   \mid\mathcal F_k
 \right]
 &\le \frac{\sigma^2}{N}.
\end{align*}
This is the form used in the proof of \cref{thm:PG}.


\end{proof}

Proof of  \cref{lem:smooth}.
\begin{proof}
    We first establish four elementary estimates.

\medskip
\noindent\emph{Step 1: Lipschitz continuity of the state distribution functions.}
For \(0\le a<b\le\PufferStoremax\),
\[
 F_t^{\vartheta,-}(b)-F_t^{\vartheta,-}(a)
 =\PP_\rho^\vartheta(a<X_t\le b,\ X_t\ne\PufferStoremax).
\]
For \(t=0\), the restriction of the law of \(X_0\) to
\((0,\PufferStoremax)\) has density bounded by \(L_\rho\).  For \(t\ge1\), let
\(Y_{t-1}=[X_{t-1}]_{\uth_{t-1}}^{\oth_{t-1}}\).  Conditional on
\(Y_{t-1}\), the restriction of the law of \(X_t\) to \((0,\PufferStoremax)\) has density
\[
 x\longmapsto f_t(x-Y_{t-1}),
\]
and therefore the unconditional interior density is bounded by \(L_Z\).  The same
argument applies to \(F_t^{\vartheta,+}\).  Thus, uniformly in \(\vartheta\),
\[
 \Lip(F_t^{\vartheta,-})\le\Lambda,
 \qquad
 \Lip(F_t^{\vartheta,+})\le\Lambda.
\]
Notice that the atoms at \(0\) and \(\PufferStoremax\) do not enter these increments because of
the exclusions in the definitions of \(F_t^{\vartheta,\pm}\).

\medskip
\noindent\emph{Step 2: Lipschitz continuity of \(a_t^\vartheta\) in its argument.}
Set
\[
 M_t\coloneqq\sup_{\vartheta\in\Theta}\Lip(a_t^\vartheta).
\]
For \(t\le T-2\), \(a^\ths_t\) is recursively given by \eqref{eq: aprime} with
\begin{align*}
 a_t^\vartheta(y)
 &=\pem F_{t+1}(-y)
   +\pin\PP(0<y+Z_{t+1}<\uth_{t+1})\\
 &\quad
   +\E\!\left[
      a_{t+1}^\vartheta(y+Z_{t+1})
      \1_{\{\uth_{t+1}<y+Z_{t+1}<\oth_{t+1}\}}
    \right]\\
 &\quad
   +\pout\PP(\oth_{t+1}<y+Z_{t+1}<\PufferStoremax).
\end{align*}
For $y_1,y_2\in[0,\PufferStoremax]$ let \(d=|y_2-y_1|\). We consider $|a_t^\vartheta(y_1)-a_t^\vartheta(y_2)|$.  The first, second and fourth terms change by at most
\(\pem L_Zd\), \(2\pin L_Zd\) and \(2\pout L_Zd\), respectively.
For the middle term, on the event on which both shifted arguments lie in the inaction
interval we use \(M_{t+1}d\); on the symmetric difference of the two inaction events we
use \eqref{eq:pathwise-bound} and the fact that this symmetric difference is contained in two
intervals of total probability at most \(2L_Zd\).  Hence 
\[
 M_t\le M_{t+1}+(3\pem+2\pin+2\pout)L_Z
 \le M_{t+1}+3\psum L_Z.
\]
The same estimate holds directly for \(t=T-1\).  Since \(M_T=0\), backward induction
yields

\begin{equation}
 M_t\le 3\psum L_Z(T-t).
 \label{eq:C9}
\end{equation}

\medskip
\noindent\emph{Step 3: sensitivity of the continuation derivative to one policy block.}
Suppose \(\vartheta,\vartheta'\in\Theta\) differ only in the \(k\)-th block.  If
\(t<k\), then
\begin{equation}
 \sup_{y\in\R}
 |a_t^\vartheta(y)-a_t^{\vartheta'}(y)|
 \le \psum L_Z\norm{\vartheta_k-\vartheta_k'}_1.
 \label{eq:C10}
\end{equation}
To prove this, observe first that the continuation after time \(k\) is identical, hence
\(a_k^\vartheta=a_k^{\vartheta'}\), \(\underline a^\ths_k = \underline a^{\ths'}_k\) and \(\overline a^\ths_k = \overline a^{\ths'}_k\).  On \((0,\PufferStoremax)\), with \eqref{eq: Vprime} we obtain
\begin{align*}
    (V_k^\vartheta)'(x) 
 &=a_k^\ths(x) + (\pin-a_k^\ths(x))\1_{\{x\le\uth_k\}}
       +(\pout-a_k^\vartheta(x))\1_{\{x\ge\oth_k\}}\\
       &=a_k^\ths(x) - \underline a^\ths_k(x)\1_{\{x\le\uth_k\}}
       -\overline a_k^\vartheta(x)\1_{\{x\ge\oth_k\}},
\end{align*}
with an analogous expression for \(\vartheta'\).  Therefore the two derivatives differ
only on the intervals between the two lower thresholds and between the two upper
thresholds, and
\begin{align}
    |(V_k^\vartheta)'(x)-(V_k^{\vartheta'})'(x)|\le
    \begin{aligned}[t]
        &|\underline a^\ths_k(x)|\1_{\big\{\min\{\lt_k,\lt_k'\} \le x \le \max\{\lt_k,\lt_k'\}\big\}}  \\
    &+ |\overline a^\ths_k(x)|\1_{\big\{\min\{\ut_k,\ut_k'\} \le x \le \max\{\ut_k,\ut_k'\}\big\}}.
    \end{aligned}
    \label{eq: sensitivity of value function derivitive to policy}
\end{align}
Using the density bound for \(Z_k\),
\begin{equation}
    \sup_y|a_{k-1}^\vartheta(y)-a_{k-1}^{\vartheta'}(y)|
 \le L_Z\int_0^{\PufferStoremax}
 |(V_k^\vartheta)'(x)-(V_k^{\vartheta'})'(x)|\,dx. \label{eq: sup-norm of a bound by derivitive of value func}
\end{equation}
For earlier times $t < k-1$ the thresholds at time
\(t+1\) are the same under the two parameters $\ths$ and $\ths'$. Thus the difference of the derivatives
of the corresponding value functions is zero outside the common inaction interval $[\lt_t, \ut_t].$ 
Therefore, for $t < k-1$, we have
\begin{align*}
    |a_t^\ths(y) - a_t^{\ths'}(y)| \le \EE|a_{t+1}^\ths(y+Z_{t+1}) - a_{t+1}^{\ths'}(y+Z_{t+1})| \le \sup_{\tilde y} |a_{t+1}^\ths(\tilde y) - a_{t+1}^{\ths'}(\tilde y)|.
\end{align*}
By backward induction we arrive at
\begin{align} \label{eq: inequality sup-norm difference in a}
    \sup_y|a_t^\ths(y) - a_t^{\ths'}(y)| \le L_Z \left(  \int_{\min\{\uth_k,\uth_k'\}}^{\max\{\uth_k,\uth_k'\}}
   |\underline a^\ths_k(x)|\,dx+
 \int_{\min\{\oth_k,\oth_k'\}}^{\max\{\oth_k,\oth_k'\}}
   |\overline a^{\ths'}_k(x)|\,dx \right).
\end{align}
The functions $\underline a^\ths_k$ and $\overline a^\ths_k$ are uniformly bounded by $\psum$, so \eqref{eq:C10} is proven for all $t < k$.

\medskip
\noindent\emph{Step 4: sensitivity of the state distribution to one policy block.}
If \(t>k\), then \(a_t^\vartheta=a_t^{\vartheta'}\), because the continuation
thresholds from \(t+1\) onward coincide.  Couple the two state processes by the same
initial state and the same disturbances.  The clipping maps satisfy
\[
 \left|
 [x]_{\uth_k}^{\oth_k}-[x]_{\uth_k'}^{\oth_k'}
 \right|
 \le \norm{\vartheta_k-\vartheta_k'}_1,
\]
and all subsequent clipping maps are nonexpansive.  Hence, for every \(s\ge k\),
\[
 |Y_s^\vartheta-Y_s^{\vartheta'}|
 \le \norm{\vartheta_k-\vartheta_k'}_1
 \quad\text{a.s.}
\]
For \(a\in[0,\PufferStoremax]\), conditioning on the post-state at time \(t-1\) and using the $L_Z$ -Lipschitz continuity of \(F_t\) gives, also at the endpoints because \(Z_t\) has a density,
\[
 |F_t^{\vartheta,\pm}(a)-F_t^{\vartheta',\pm}(a)|
 \le L_Z\E|Y_{t-1}^\vartheta-Y_{t-1}^{\vartheta'}|.
\]
Consequently,
\begin{equation}
 \sup_{a\in[0,\PufferStoremax]}
 |F_t^{\vartheta,\pm}(a)-F_t^{\vartheta',\pm}(a)|
 \le L_Z\norm{\vartheta_k-\vartheta_k'}_1.
 \label{eq:C11}
\end{equation}

We now compare the gradients using \eqref{eq:grad-lower}--\eqref{eq:grad-upper}.  If \(t<k\), the
state distribution at time \(t\) is the same under \(\vartheta\) and \(\vartheta'\), so
\eqref{eq:C10} and \(\norm{v}_1\le\sqrt2\norm{v}_2\) imply
\[
 \norm{\nabla_{\vartheta_t}\ell(\vartheta)
       -\nabla_{\vartheta_t}\ell(\vartheta')}_2
 \le2\psum L_Z\norm{\vartheta_k-\vartheta_k'}_2.
\]
If \(t>k\), the continuation derivative is the same and the same estimate follows from
\eqref{eq:C11}.  Finally, for \(t=k\), the functions
\(F_k^{\vartheta,\pm}=F_k^{\vartheta',\pm}\) and
\(a_k^\vartheta=a_k^{\vartheta'}\), but their arguments change.  From the Lipschitz estimate for $F_k^{\vartheta,\pm}$ above and \eqref{eq:C9},
\[
 \norm{\nabla_{\vartheta_k}\ell(\vartheta)
       -\nabla_{\vartheta_k}\ell(\vartheta')}_2
 \le \psum(\Lambda+3L_ZT)
       \norm{\vartheta_k-\vartheta_k'}_2
 \le4\psum\Lambda T\norm{\vartheta_k-\vartheta_k'}_2.
\]
Thus, whenever two policy parameters differ in one time block only,
\[
 \norm{\nabla\ell(\vartheta)-\nabla\ell(\vartheta')}_2
 \le4\psum\Lambda T^{3/2}
     \norm{\vartheta_k-\vartheta_k'}_2.
\]
For arbitrary \(\vartheta,\vartheta'\), change the \(T\) blocks one after another and
apply the triangle inequality followed by Cauchy--Schwarz.  We obtain
\begin{equation}
 \norm{\nabla\ell(\vartheta)-\nabla\ell(\vartheta')}_2
 \le 4\psum\Lambda T^2\norm{\vartheta-\vartheta'}_2.
 \label{eq:44}
\end{equation}
Hence \(\ell\) has an \(L\)-Lipschitz gradient on \(\Theta\) with
\(L=4\psum\Lambda T^2\).

\end{proof}

For the proof of \cref{lem:KL} we use \cite[Theorem 1]{chen2026LandscapePolicyOptimization}. Our notation differs from that in \cite{chen2026LandscapePolicyOptimization} therefore, we restate Theorem 1 after introducing the notation:

Let $\ths \in \thsspace$ be a policy parameter and $t \in \TimeIndex'$. We split $\ths$ into the prefix and suffix policy parameters $\thspa{t}$  and $\thsfu{t}$ given by 
\[ \thspa{t} = \big( \lt_0, \ut_0,\dots, \lt_{t-1}, \ut_{t-1} \big) \quad \text{and} \quad \thsfu{t} = \big(\lt_{t+1}, \ut_{t+1}, \dots,\lt_{T-1}, \ut_{T-1} \big). \]
So, we can write $\ths=(\thspa{t}, \ths_t, \thsfu{t})$. For a fixed policy parameter $\ths$ we define two functions
\begin{align}
    \Psi_t^\ths(\eta) &\coloneqq \E_\rho^{\thspa{t}}\left[Q^{\thsfu{t}}_t \left( X_t,[X_t]_{\underline\eta}^{\overline\eta} \right)\right],  &\eta&=(\underline\eta,\overline\eta)\in\Theta_t, \label{eq:Psi} \\
    \Phi_t^\ths(\eta) &\coloneqq \E_\rho^{\thspa{t}}\left[Q_t \left( X_t,[X_t]_{\underline\eta}^{\overline\eta} \right)\right], \qquad &\eta&=(\underline\eta,\overline\eta)\in\Theta_t, \label{eq:Phi}
\end{align}
where $Q_t^{\thsfu{t}}$ is the parameterized $Q$-function and $Q_t$ is the optimal $Q$-function. We remark that $\Psi^\ths_t$ does not depend on $\ths_t$ and $\Phi^\ths_t$ only depends on $\thspa{t}$, not on $\ths_t$ and $\thsfu{t}$. Further, if $\thsfu{t}$ is optimal, i.e.\ $\thsfu{t} = \thsfu{t}^*$, then $\Psi_t^\ths = \Phi_t^\ths$ on $\thsspace_t$.
\begin{theorem}[Theorem 1 in \cite{chen2026LandscapePolicyOptimization}] \label{thm: Theorem cited from Landscape Paper}
    Suppose the following conditions hold.
    \begin{enumerate}[1)]
        \item \textbf{(Bounded Gradients)} For all $t \in \TimeIndex'$ and $\ths \in \thsspace$, the function $\Psi^\ths_t$ is continuously differentiable on $\thsspace_t$ with the $2$-norm of its gradient upper bounded by $G$.
        \item \textbf{(\PLK condition of Expected Optimal $Q$-value Functions)} For all $t \in \TimeIndex'$ and $\ths \in \thsspace$, the function $\Phi^\ths_t$ satisfies the \PLK condition with a \PLK constant $\mu_Q > 0$ on $\thsspace_t$.
        \item \textbf{(Sequential Decomposition Inequality)} There exists a $M_g>0$ such that for all $0 \le t < k \le T-1$ and $\ths=(\thspa{t}, \ths_t, \thsfu{t}) \in \thsspace$ the inequality 
        \begin{align*}
            \norm{\nabla_{\ths_t}\lo\big(\thspa{k}, \ths_k, \thsfu{k}^* \big) - \nabla_{\ths_t}\lo\big(\thspa{k}, \ths_k^*, \thsfu{k}^* \big) }_2 \le M_g\big( \Phi^\ths_k(\ths_k^*) - \Phi^\ths_k(\ths_k)  \big)
        \end{align*}
        holds, where $\ths^* = (\thspa{k}^*, \ths_k^*, \thsfu{k}^*)$ is the optimal policy parameter. 
    \end{enumerate}
    Then the objective function $\lo$ satisfies the \PLK condition on $\thsspace$ with a \PLK constant of 
    \[ \mu_\lo = \frac{\mu_Q^3}{e M_g^2 G^2 T^2}, \]
    i.e.
    \[ \lo^* - \lo(\ths) \le \frac{eM_g^2G^2T^2}{2 \mu_Q^3} \min_{n \in N_\thsspace(\ths)} \norm{\nabla \lo(\ths) - n}^2, \qquad \ths \in \thsspace. \]
\end{theorem}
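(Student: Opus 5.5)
Fix $\ths\in\thsspace$ and, for $t\in\TimeIndex'$, consider the hybrid parameters $\ths^{[t]}\coloneqq(\thspa{t},\ths_t,\thsfu{t}^*)$ and $\ths^{[t],*}\coloneqq(\thspa{t},\ths^*_t,\thsfu{t}^*)$. Note that $\ths^{[t],*}=\ths^{[t-1]}$ with the convention $\ths^{[-1]}=\ths^*$, and $\ths^{[T-1]}=\ths$. I would first write the optimality gap as a telescoping sum (a performance-difference identity):
\[
\lo^*-\lo(\ths)=\sum_{t\in\TimeIndex'}\big(\lo(\ths^{[t],*})-\lo(\ths^{[t]})\big).
\]
Since the continuation after $t$ is optimal, the two parameters share the prefix $\thspa{t}$ and hence the law of $X_t$. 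Conditioning on $X_t$ then gives $\lo(\ths^{[t],*})-\lo(\ths^{[t]})=\Phi^\ths_t(\ths^*_t)-\Phi^\ths_t(\ths_t)\eqqcolon\delta_t\ge0$, and $\ths^*_t$ maximizes $\Phi^\ths_t$ over $\thsspace_t$.

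Second, I would apply condition 2) blockwise. It gives $\delta_t\le\frac{1}{2\mu_Q}\min_{n_t\in N_{\thsspace_t}(\ths_t)}\norm{\nabla\Phi^\ths_t(\ths_t)-n_t}^2$. Because $\Psi_t=\Phi_t$ when the suffix is optimal, we have $\nabla\Phi^\ths_t(\ths_t)=\nabla_{\ths_t}\lo(\ths^{[t]})$ up to the common positive structure of the objective. I then need to compare this with the actual block gradient $\nabla_{\ths_t}\lo(\ths)$. To do so, I change the suffix blocks $k=t+1,\dots,T-1$ from $\ths^*_k$ to $\ths_k$ one at a time. Each change is exactly of the form in condition 3), so the triangle inequality yields
\[
\norm{\nabla_{\ths_t}\lo(\ths^{[t]})-\nabla_{\ths_t}\lo(\ths)}_2\le M_g\sum_{k>t}\delta_k .
\]
Since $\thsspace$ is a product, $N_\thsspace(\ths)=\prod_t N_{\thsspace_t}(\ths_t)$. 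Writing $g_t\coloneqq\min_{n_t}\norm{\nabla_{\ths_t}\lo(\ths)-n_t}$, we therefore obtain $\sqrt{2\mu_Q\delta_t}\le g_t+M_g\sum_{k>t}\delta_k$, and $\sum_t g_t^2=\min_{n\in N_\thsspace(\ths)}\norm{\nabla\lo(\ths)-n}^2$.

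Third, I would close this backward recursion. Condition 1) gives an a priori bound via PLK applied to the gradient norm: $\delta_k\le G\sqrt{\delta_k}/\sqrt{2\mu_Q}$, i.e.\ $\sqrt{\delta_k}\le G/\sqrt{2\mu_Q}$. Hence the error term can be linearized as $M_g\delta_k\le \frac{M_gG}{\sqrt{2\mu_Q}}\sqrt{\delta_k}$. With $s_t\coloneqq\sqrt{2\mu_Q\delta_t}$ this becomes
\[
s_t\le g_t+c\sum_{k>t}s_k,\qquad c=\frac{M_gG}{2\mu_Q}.
\]
Backward induction then bounds $s_t$ by a weighted sum of the $g_k$ with geometric weights. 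To keep the constants polynomial I would not bound the weights crudely. Instead I would rescale, i.e.\ replace $\mu_Q$ effectively by $\mu_Q/T$ in the linearization, choosing the split so that the growth factor is $(1+1/T)^T\le e$. This is where the $e$ and the $T^2$ in $\mu_\lo$ come from.

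Finally, summing $\delta_t=s_t^2/(2\mu_Q)$ and using Cauchy--Schwarz, $(\sum_k g_k)^2\le T\sum_k g_k^2$, gives $\lo^*-\lo(\ths)\le \frac{eM_g^2G^2T^2}{2\mu_Q^3}\sum_t g_t^2$, which is the claim. The main obstacle is this third step. The recursion must be balanced carefully so that the constant comes out as $\mu_Q^3/(eM_g^2G^2T^2)$ rather than something exponential in $T$. I would first try the weighted induction hypothesis $\delta_t\le \frac{C}{\mu_Q}\sum_{k\ge t}g_k^2$ with $C$ chosen to absorb the cross terms via Young's inequality with parameter $1/T$.
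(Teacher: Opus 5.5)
The paper gives no proof of this statement. It is quoted from \cite{chen2026LandscapePolicyOptimization} and used as a black box in the proof of \cref{lem:KL}, so your argument has to stand on its own.

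\textbf{Steps 1--2 are sound.} The telescoping over the hybrids $(\vartheta_{<t},\vartheta_t,\vartheta^*_{>t})$ gives $\ell^*-\ell(\vartheta)=\sum_t\delta_t$. The identity $\nabla\Phi^\vartheta_t(\vartheta_t)=\nabla_{\vartheta_t}\ell(\vartheta_{<t},\vartheta_t,\vartheta^*_{>t})$ is in fact exact. Chaining condition 3) over $k=t+1,\dots,T-1$, using that the distance to $N_{\Theta_t}(\vartheta_t)$ is $1$-Lipschitz, and using the product structure of $N_\Theta(\vartheta)$, you correctly get
\[
0\le s_t\le g_t+\beta\sum_{k>t}s_k^2,\qquad s_t\le G,\qquad \beta\coloneqq\frac{M_g}{2\mu_Q}.
\]

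\textbf{Step 3 is where the proof fails.} Once you replace $\beta s_k^2$ by $c\,s_k$ with $c=M_gG/(2\mu_Q)$, using the global bound $s_k\le G$, you are left with a linear system. Its extremal solution (take $g$ supported at $T-1$) is $s_t=c(1+c)^{T-2-t}g_{T-1}$, so any bound derived from that system alone is exponential in $T$.

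\begin{itemize}
\item The proposed rescaling cannot fix this. Weakening the \PLK constant to $\mu_Q/T$ turns $c$ into $c\sqrt T$, i.e.\ it makes $c$ larger.
\item Getting $c\le 1/T$ would need $s_k\lesssim\mu_Q/(M_gT)$, which holds only near the optimum and is not available globally.
\item Your fallback induction $\delta_t\le\frac{C}{\mu_Q}\sum_{k\ge t}g_k^2$ hits the same wall. The cross term $M_g\sum_{k>t}\delta_k$ is then of order $\|g\|_2^2$, i.e.\ quadratic in $g$. Young's inequality cannot absorb it into a bound linear in $g_t$ unless $\|g\|_2$ is already known to be small.
\end{itemize}

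\textbf{The missing idea} is to keep the quadratic error term and split by the size of $\|g\|_2^2=\min_{n\in N_\Theta(\vartheta)}\|\nabla\ell(\vartheta)-n\|_2^2$.

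Set $V_t\coloneqq(\sum_{k\ge t}s_k^2)^{1/2}$ with $V_T=0$. Since $0\le s_k\le g_k+\beta V_{k+1}^2$, Minkowski's inequality gives $V_t\le\|g\|_2+\beta\sqrt T\max_{k>t}V_k^2$.

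\begin{itemize}
\item If $4\beta\sqrt T\|g\|_2\le1$, backward induction gives $V_t\le2\|g\|_2$ for all $t$. Hence $\ell^*-\ell(\vartheta)=V_0^2/(2\mu_Q)\le\frac{2}{\mu_Q}\|g\|_2^2$.
\item Otherwise $\|g\|_2^2>\mu_Q^2/(4M_g^2T)$. The a priori bound $\ell^*-\ell(\vartheta)\le TG^2/(2\mu_Q)$ then yields $\ell^*-\ell(\vartheta)\le\frac{2M_g^2G^2T^2}{\mu_Q^3}\|g\|_2^2$.
\end{itemize}

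So the $\mu_Q^3/(M_g^2G^2T^2)$ scaling comes from this two-regime argument (with $4$ in place of $e$), not from a $(1+1/T)^T$ factor in a linearized recursion.

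\textbf{The first regime cannot be dropped.} For $T=1$ condition 3) is vacuous, and the stated constant would blow up as $M_g\to0$, forcing $\ell$ to be constant. So the form $\mu_Q^3/(eM_g^2G^2T^2)$ tacitly needs $M_gGT\gtrsim\mu_Q$. This holds in the paper's application, where $M_gG/\mu_Q=4L_Z\psum/(\alpha^3\pem\mu_Z)\ge4$, but your final bookkeeping has to invoke it explicitly.
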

The following proof of \cref{lem:KL} is similar to the analysis of the stochastic cash balance problem in Section 7 of \cite{chen2026LandscapePolicyOptimization}.

\begin{proof}[Proof of  \cref{lem:KL}]

First, by \eqref{eq:grad-lower},
\eqref{eq:grad-upper}, and $0\le a_t^\vartheta(y)\le\pem,$
we have the uniform estimate
\begin{equation}
\norm{
\nabla_{\vartheta_t}\ell(\vartheta)
}_2
\le
2\psum
=:G.
\label{eq:G}
\end{equation}

Next, we consider the \PLK condition for $\Phi^\ths_t$. For every state $x$ and arbitrary threshold \(\eta=(\underline\eta,\overline\eta)\), with $0 \le \underline \eta \le \overline \eta \le \PufferStoremax$, we can rewrite the $Q_t$-function by
\begin{align*}
    Q_t(x,\clip{x}{\underline \eta}{\overline \eta}) &= Q_t(x, \max\{x, \underline \eta\}) + Q_t(x, \min\{x, \overline \eta\}) - Q_t(x,x)\\
    &= \hcht(\max\{x, \underline \eta\}) + \pin x + \hdst(\min\{x, \overline \eta\}) + \pout x - \holdt(x).
\end{align*}
Therefore, the loss from using the threshold pair
\(\eta=(\underline\eta,\overline\eta)\) instead of the optimal pair can be rewritten as
\begin{align*}
     Q_t\big(x, \clip{x}{\lt_t^*}{\ut_t^*}\big) - Q_t\big(x, \clip{x}{\underline \eta}{\overline \eta}\big) =\begin{aligned}[t]
         &\hcht(\max\{x, \lt^*_t\})  -  \hcht(\max\{x, \underline \eta\})\\
         &+ \hdst(\min\{x, \ut^*_t\}) - \hdst(\min\{x, \overline \eta\}).
     \end{aligned}
\end{align*}
By using the concavity of $\hcht$ and $\hdst$ we get the bounds 
\[ 0 \le  Q_t\big(x, \clip{x}{\lt_t^*}{\ut_t^*}\big) - Q_t\big(x, \clip{x}{\underline \eta}{\overline \eta}\big) \le \hcht(\lt_t^*) - \hcht(\underline \eta) + \hdst(\ut_t^*) - \hdst(\overline \eta).    \]
After taking expectations,
\begin{align}
0&\le \Phi_t^\vartheta(\vartheta_t^*)-\Phi_t^\vartheta(\eta)
\le\hcht(\uth_t^*)- \hcht(\underline\eta)+\hdst(\oth_t^*)-\hdst(\overline\eta).
\label{eq:Q-gap-upper}
\end{align}
By \eqref{eq:A2},  \cref{ass:PG} and the concavity of $V_{t+1}$ for a.e. $y\in [0,\PufferStoremax]$
\begin{align*}
h_t''(y) &=
	\begin{aligned}[t]
		&-\big(\pem - V_{t+1}'(0)\big) f_{t+1}(-y) -  V_{t+1}'(\PufferStoremax) f_{t+1}(\PufferStoremax-y) \\
		& + \int_{0}^{\mathrlap{\PufferStoremax}} V_{t+1}''(z) f_{t+1}(z-y) \diff z, 
	\end{aligned} \\
&\le - \left(\pem- V_{t+1}'(0)+V_{t+1}'(\PufferStoremax) - \int_{0}^{\mathrlap{\PufferStoremax}} V_{t+1}''(z) \diff z \right)\mu_Z \\
&= -\pem \mu_Z=-\kappa.
\end{align*}
Hence both $\hcht$ and $\hdst$ are \(\kappa\)-strongly concave. Their respective maximizers on $\statespace$ are the  optimal lower and upper thresholds $\uth_t^*$ and $\oth_t^*.$
Hence, 
\[
 \hcht(\lt^*_t)-\hcht(\underline \eta)
 \le \hcht'(\underline \eta)(\lt_t^*-\underline \eta)-\frac\kappa2|\lt_t^*- \underline \eta |^2
\]
and an analogous inequality for $\hdst.$ The same independence argument used for \eqref{eq:grad-lower}--\eqref{eq:grad-upper} gives
\begin{equation*}
\partial_{\underline \eta} \Phi_t^\vartheta(\eta) = F_t^{\vartheta,-}(\underline \eta) \hcht'(\underline \eta) \quad \text{and} \quad \partial_{\overline \eta} \Phi_t^\vartheta(\eta) = F_t^{\vartheta,+}(\overline \eta) \hdst'(\overline \eta).
\end{equation*}
\eqref{eq:activation-lower-bound} yields
\begin{align}
\Phi_t^\vartheta(\vartheta_t^*)
-
\Phi_t^\vartheta(\eta)
&\le\frac1\alpha\left\langle\nabla_\eta\Phi_t^\vartheta(\eta), \vartheta_t^*-\eta\right\rangle- \frac{\kappa}{2}\norm{\vartheta_t^* - \eta}_2^2\nonumber\\
&\le \max_{\ths_t \in \thsspace_t}\left\{ \frac1\alpha\left\langle\nabla_\eta\Phi_t^\vartheta(\eta), \ths_t-\eta\right\rangle- \frac{\kappa}{2}\norm{\ths_t - \eta}_2^2 \right\}.
\label{eq:one-stage-KL}
\end{align}
Inequality \eqref{eq:one-stage-KL} is the gradient dominance property from \cite[Definition 2]{chen2026LandscapePolicyOptimization}. With \cite[Lemma 16]{chen2026LandscapePolicyOptimization}
we get the \PLK inequality of $\Phi^\ths_t$ with \PLK-constant
\begin{equation}
 \mu_Q=\kappa\alpha^2.
\label{eq:mu-Q}
\end{equation}

We finally establish the sequential-decomposition estimate.
Fix \(t<k\) and let
\[
\vartheta^\alpha
=
(\vartheta_{<k},\vartheta_k,\vartheta_{>k}^*) 
\]
and
\[
\vartheta^\beta
=
(\vartheta_{<k},\vartheta_k^*,\vartheta_{>k}^*).
\]
The two policies have the same prefix before time \(k\), and they have the same optimal continuation after time \(k\).  In particular, \(a_k^{\vartheta^\alpha}=a_k^{\vartheta^\beta}=h_k'\), \(\underline a_k^{\vartheta^\alpha}=\underline a_k^{\vartheta^\beta}=\underline h_k'\) and \(\overline a_k^{\vartheta^\alpha}=\overline a_k^{\vartheta^\beta}=\overline h_k'\) .  
With \eqref{eq: inequality sup-norm difference in a} we get, for $t < k$,
\begin{align*}
 \sup_y|a^{\ths^\alpha}_t(y) - a^{\ths^\beta}_t(y)|
 &\le L_Z \left(
 \int_{\min\{\uth_k,\uth_k^*\}}^{\max\{\uth_k,\uth_k^*\}}
   |\hch{k}'(x)|\,dx+
 \int_{\min\{\oth_k,\oth_k^*\}}^{\max\{\oth_k,\oth_k^*\}}
   |\hds{k}'(x)|\,dx \right).
\end{align*}
Because the two strongly concave functions $\hch{k}$ and $\hds{k}$ are increasing before and decreasing after
their respective maximizers $\lt_k^*$ and $\ut_k^*$, the last two integrals equal the corresponding
suboptimality gaps.  Hence
\[
 \sup_y|a^{\ths^\alpha}_t(y) - a^{\ths^\beta}_t(y)| \le L_Z\Delta_k,
 \qquad
 \Delta_k\coloneqq\hch{k}(\uth_k^*)-\hch{k}(\uth_k)
          +\hds{k}(\oth_k^*)-\hds{k}(\oth_k).
\]
 Since the state distribution at time \(t<k\) is also identical
under \(\vartheta^\alpha\) and \(\vartheta^\beta\), formulas
\eqref{eq:grad-lower}--\eqref{eq:grad-upper} yield the two-dimensional bound
\begin{align}
\norm{
\nabla_{\vartheta_t}\ell(\vartheta^\alpha)
-
\nabla_{\vartheta_t}\ell(\vartheta^\beta)
}_2
\le{}&
2L_Z \Delta_k.
\label{eq:sequential-1}
\end{align}

We now compare \(\Delta_k\) with the one-stage loss.  At \(X_k=0\), the difference
between the optimal and the current time-\(k\) action is exactly
\(\hch{k}(\uth_k^*)-\hch{k}(\uth_k)\).  At \(X_k=\PufferStoremax\), the corresponding difference is
exactly \(\hds{k}(\oth_k^*)-\hds{k}(\oth_k)\).  At every other state the difference is
nonnegative because \(\vartheta_k^*\) is optimal for \(Q_k\).  Since
\(\PP_\rho^\vartheta(X_k=0)\ge\alpha\) and
\(\PP_\rho^\vartheta(X_k=\PufferStoremax)\ge\alpha\),
\begin{equation}
 \Phi_k^\vartheta(\vartheta_k^*)-\Phi_k^\vartheta(\vartheta_k)
 \ge\alpha\Delta_k.
 \label{eq:C20}
\end{equation}
Combining \eqref{eq:sequential-1} and \eqref{eq:C20},
\begin{equation}
 \norm{
  \nabla_{\vartheta_t}\ell(\vartheta^\alpha)
  -\nabla_{\vartheta_t}\ell(\vartheta^\beta)
 }_2
 \le\frac{2L_Z}{\alpha}
 \bigl(
   \Phi_k^\vartheta(\vartheta_k^*)-\Phi_k^\vartheta(\vartheta_k)
 \bigr).
 \label{eq:C21}
\end{equation}
Thus the sequential-decomposition constant can be chosen as
\begin{equation}
M_g
=
\frac{2L_Z}{\alpha}.
\label{eq:Mg}
\end{equation}

For completeness, the finite-horizon performance-difference identity used in the general
KL theorem can be seen directly by a telescoping argument.  Define the hybrid policies
\[
 \vartheta^{[k]}
 \coloneqq(\vartheta_0,\dots,\vartheta_{k-1},
    \vartheta_k^*,\dots,\vartheta_{T-1}^*),
 \qquad k=0,\dots,T.
\]
Then \(\vartheta^{[0]}=\vartheta^*\),
\(\vartheta^{[T]}=\vartheta\), and the \(k\)-th difference in the telescoping sum changes
only the decision at time \(k\), while the prefix distribution is generated by
\(\vartheta_{<k}\) and the continuation is optimal.  Hence
\begin{equation}
 \ell^*-\ell(\vartheta)
 =\sum_{t=0}^{T-1}
   \bigl(
    \Phi_t^\vartheta(\vartheta_t^*)-\Phi_t^\vartheta(\vartheta_t)
   \bigr).
\label{eq:C23}
\end{equation}

We have now verified the three hypotheses of the finite-horizon \PLK composition theorem
of \cite[Theorem 1]{chen2026LandscapePolicyOptimization}: the block gradients are bounded by \(G\), the
one-stage expected optimal \(Q\)-value functions satisfy the constrained \PLK inequality
with constant \(\mu_Q\), and the sequential-decomposition inequality holds with constant
\(M_g\).  Applying the maximization version of that theorem (equivalently, applying its
minimization statement after changing signs) gives
\[
 \ell^*-\ell(\vartheta)
 \le \frac{1}{2\mu_\ell}
    \min_{n \in N_\thsspace(\ths)}\norm{\nabla\ell(\vartheta)-n}^2,
\]
where
\[
 \mu_\ell=\frac{\mu_Q^3}{eM_g^2G^2T^2}.
\]

Substituting \eqref{eq:mu-Q}, \eqref{eq:G}, and \eqref{eq:Mg} yields
\[
\mu_\ell
=
\frac{
\kappa^3\alpha^8
}{
16eL_Z^2\psum^2T^2
}.
\]
This proves  \cref{lem:KL}.
\end{proof}

\begin{proof}[Proof of  \cref{thm:PG}]
The first assertion
\begin{equation}
 \E\bigl[\ell^*-\ell(\vartheta^{(K)})\bigr]
 \le
 \left(1-\frac{\mu_\ell}{16L_{\nabla \lo}+\mu_\ell}\right)^K
 \bigl(\ell^*-\ell(\vartheta^{(0)})\bigr)
 +\frac{17\sigma^2}{\mu_\ell N},
 \label{eq:47}
\end{equation}
follows from \cite[Lemma 1]{chen2026LandscapePolicyOptimization} for $f \coloneqq -\lo$. The necessary properties of $\lo$ and $\gradientsmp \lo_N$ were shown in \cref{lem:unbiased,lem:smooth,lem:KL}.

  If
\begin{equation}
 N\ge\frac{34\sigma^2}{\mu_\ell\delta q}
 =\frac{68\psum^2T}{\mu_\ell\delta q},
 \label{eq:48}
\end{equation}
then the second term on the right-hand side of \eqref{eq:47} is at most
\(\delta q/2\).  If \(\Delta_0>0\) and
\begin{equation}
 K\ge\frac{16L+\mu_\ell}{\mu_\ell}
       \log\!\left(\frac{2\Delta_0}{\delta q}\right),
 \label{eq:49}
\end{equation}
then, using \(1-x\le e^{-x}\), the first term is at most \(\delta q/2\).
If \(\Delta_0=0\), the claim is already trivial.  Hence in all cases
\[
 \E\bigl[\ell^*-\ell(\vartheta^{(K)})\bigr]\le\delta q.
\]
Since the random variable \(\ell^*-\ell(\vartheta^{(K)})\) is nonnegative, Markov's
inequality gives
\begin{equation}
 \PP\bigl(\ell^*-\ell(\vartheta^{(K)})>\delta\bigr)
 \le q.
 \label{eq:410}
\end{equation}
This proves  \cref{thm:PG}.
\end{proof}

Proof of \cref{cor:threshold_bounds}.

\begin{proof}
  Recall from the proof of \cref{lem:KL} that the functions
$\hcht$ and $\hdst$ are $\kappa$-strongly concave on
$[0,P_{\max}]$, with respective maximizers
$\underline\vartheta_t^*$ and $\overline\vartheta_t^*$.
Hence
\[
 \hcht(\underline\vartheta_t^*)
   -\hcht(\underline\vartheta_t)
 \ge
 \frac{\kappa}{2}
 |\underline\vartheta_t-\underline\vartheta_t^*|^2
\]
and
\[
 \hdst(\overline\vartheta_t^*)
   -\hdst(\overline\vartheta_t)
 \ge
 \frac{\kappa}{2}
 |\overline\vartheta_t-\overline\vartheta_t^*|^2.
\]
Together with \cref{eq:C20}, this yields
\[
 \Phi_t^\vartheta(\vartheta_t^*)
 -\Phi_t^\vartheta(\vartheta_t)
 \ge
 \frac{\alpha\kappa}{2}
 \|\vartheta_t-\vartheta_t^*\|_2^2.
\]
Using the performance-difference identity \eqref{eq:C23}, we obtain
\[
 \ell^*-\ell(\vartheta)
 \ge
 \frac{\alpha\kappa}{2}
 \|\vartheta-\vartheta^*\|_2^2.
\]  
In particular, $\vartheta^*$ is the unique global maximizer of
$\ell$ under  \cref{ass:PG}. Therefore,
\[
 \E\|\vartheta^{(K)}-\vartheta^*\|_2^2
 \le
 \frac{2}{\alpha\kappa}
 \E\bigl[\ell^*-\ell(\vartheta^{(K)})\bigr].
\]
Applying  \cref{thm:PG} and using
$\kappa=\pem \mu_Z$
gives \cref{eq:threshold_difference}.
\end{proof}

\newpage
\printbibliography

\end{document}